\numberwithin{equation}{section}
\def\@tocline#1#2#3#4#5#6#7{\relax
  \ifnum #1>\c@tocdepth 
  \else
    \par \addpenalty\@secpenalty\addvspace{#2}%
    \begingroup \hyphenpenalty\@M
    \@ifempty{#4}{%
      \@tempdima\csname r@tocindent\number#1\endcsname\relax
    }{%
      \@tempdima#4\relax
    }%
    \parindent\z@ \leftskip#3\relax \advance\leftskip\@tempdima\relax
    \rightskip\@pnumwidth plus4em \parfillskip-\@pnumwidth
    #5\leavevmode\hskip-\@tempdima
      \ifcase #1
       \or\or \hskip 1em \or \hskip 2em \else \hskip 3em \fi%
      #6\nobreak\relax
      \dotfill
      \hbox to\@pnumwidth{\@tocpagenum{#7}}
    \par
    \nobreak
    \endgroup
  \fi}
\theoremstyle{plain}
\newtheorem{theorem}{Theorem}
\newtheorem{lemma}[theorem]{Lemma}
\newtheorem{corollary}[theorem]{Corollary}
\newtheorem{proposition}[theorem]{Proposition}
\theoremstyle{definition}
\newtheorem{definition}[theorem]{Definition}
\newtheorem{assumption}[theorem]{Assumption}
\theoremstyle{remark}
\newcommand{\bra}[1]{\left\langle #1\right|}
\newcommand{\ket}[1]{\left|#1\right\rangle}
\newcommand{\wto}{\rightharpoonup}
\DeclareMathOperator{\Tr}{Tr}
\newcommand{\nor}{\mathrm{nor}}
\newcommand{\cA}{\mathcal{A}}
\newcommand{\cL}{\mathcal{L}}
\newcommand{\cS}{\mathcal{S}}
\newcommand{\bN}{\mathbb{N}}
\newcommand{\R}{\mathbb{R}}
\newcommand{\cN}{\mathcal{N}}
\newcommand{\gH}{\mathfrak{H}}
\newcommand{\gF}{\mathfrak{F}}
\newcommand{\gh}{\mathfrak{h}}
\newcommand{\cB}{\mathcal{B}}
\newcommand{\cE}{\mathcal{E}}
\newcommand{\cP}{\mathcal{P}}
\newcommand{\cM}{\mathcal{M}}
\newcommand{\1}{\mathds{1}}
\newcommand{\eps}{\varepsilon}
\newcommand{\ada}{a ^{\dagger}}
\newcommand{\cda}{c ^{\dagger}}
\newcommand{\im}{\mathrm{i}}
\newcommand{\Gammat}{\widetilde{\Gamma}}
\newcommand{\gHt}{\mathfrak{H}_{\rm tot}}
\newcommand{\cvnet}{\overset{\star}{\underset{\mathrm{net}}{\rightharpoonup}}}
\newcommand{\cEP}{\cE_{\mathrm{Pek}}}
\newcommand{\id}{\mathds{1}}
\numberwithin{theorem}{section}
\numberwithin{equation}{section}
\begin{document}

\title{Convergence of states for polaron models in the classical limit}

\author[M. Falconi]{Marco Falconi}
\address{Department of Mathematics, Politecnico di Milano, Piazza Leonardo da Vinci 32, 20133 Milano}
\email{marco.falconi@polimi.it}

\author[A. Olgiati]{Alessandro Olgiati}
\address{Department of Mathematics, Politecnico di Milano, Piazza Leonardo da Vinci 32, 20133 Milano}
\email{alessandro.olgiati@polimi.it}

\author[N. Rougerie]{Nicolas Rougerie}
\address{Ecole Normale Sup\'erieure de Lyon \& CNRS,  UMPA (UMR 5669)}
\email{nicolas.rougerie@ens-lyon.fr}

\date{January 2025}

\maketitle

\begin{abstract}
We consider the quasi-classical limit of Nelson-type regularized polaron models describing a particle interacting with a quantized bosonic field. We break translation-invariance by adding an attractive external potential decaying at infinity, acting on the particle. In the strong coupling limit where the field behaves classically we prove that the model's energy quasi-minimizers strongly converge to ground states of the limiting Pekar-like non-linear model. This holds for arbitrarily small external attractive potentials, hence this binding is fully due to the interaction with the bosonic field. We use a new approach to the construction of quasi-classical measures to revisit energy convergence, and a localization method in a concentration-compactness type argument to obtain convergence of states.
\end{abstract}

\tableofcontents

\section{Introduction}

A quantum particle interacting with a quantized bosonic field (e.g. an electron interacting with the phonons of a crystal) may exhibit self-trapping: the particle is confined in a ``hole'' of its own making in the field. Usual linear models (Fr\"ohlich's polaron, Nelson model) are translation invariant and this phenomenon thus may not take the form of existence of actual bound states. One of the strongest mathematical evidence for the phenomenon is the existence of energy minimizers for the non-linear quasi-classical approximations of the models. In the case of the Fr\"ohlich polaron~\cite{Moller-06,Seiringer-19}, the limiting model is Pekar's, for which the existence and uniqueness (up to translations) of ground states was proven in~\cite{Lieb-77,Lions-80}. Other related models also exhibit this phenomenon, some being studied e.g. in~\cite{BreFauPay-22a,BreFauPay-22b,FroJonLen-07,FroJonLen-07b,LewRou-13,LewRou-13b,Ricaud-14}.    

The validity of the quasi-classical approximation has been established in~\cite{DonVar-83,LieTho-97,MiySpo-07} in the strong-coupling limit at the level of the ground state energy. Quantum corrections are investigated in~\cite{BroSei-22,FraSei-19,FelSei-21}. The corresponding dynamical problem is considered e.g. in~\cite{CorFalOli-19,FraGan-17,LeoRadSchSei-19,LeoMitSei-20,FalLeoMitPet-21,GriSchSch-17,Griesemer-17}. If a trapping external potential (increasing to infinity at spatial infinity) is further added to the model, the convergence of ground energy states to quasi-classical minimizers is proved in~\cite{CorFalOli-20}. 

Here we shall break the translation invariance by an arbitrarily small, decaying, external attractive potential, and prove that this is sufficient for self-trapping in the quasi-classical limit. For simplicity we consider Nelson-type models with regular particle-field interactions, where the definition of the Hamiltonian 
\begin{equation}\label{eq:intro-hamil}
	\begin{split}
		H_\alpha ^{(V)} := \;&\left(-\Delta + V\right) \otimes \1 \\
		&+ \alpha^{-2}\left( \1 \otimes \int_{\R^d} T(k) \widehat{c}^\dagger (k) \widehat{c} (k)\,dk 
		+ \alpha \int_{\R^d} \left( e^{\im k\cdot x} \widehat{v} (k) \widehat{c}^{\dagger} (k) + \mathrm{h.c.} \right)dk  \right)
	\end{split}
\end{equation}
as a self-adjoint operator is straightforward (the Lieb-Yamakazi method~\cite{LieYam-58} or Gross transformation~\cite{Seiringer-19} are not needed). The above acts on
\begin{equation}\label{eq:intro-hilbert}
\gH := L^2 (\R^d) \otimes \gF (L^2 (\R^d)), 
\end{equation}
the tensor product of the particle and field Hilbert spaces, the latter being the bosonic Fock space constructed from the one-particle space $L^2 (\R^d)$,
\begin{equation*}\gF (L^2 (\R^d)) = \bigoplus_{n\geq 0} \left(L^2 (\R^d)\right)^{\otimes_{\mathrm{sym}} n}.
\end{equation*}
The particle's coordinate is labeled by $x$, i.e. $e^{\im k\cdot x}$ acts as mutliplication on the particle's side. The standard bosonic operators $\widehat{c}^\dagger (k),\widehat{c}(k)$ create/annihiliate a field excitation in the Fourier mode $k\in\R^d$, and satisfy usual canonical commutation relations (CCR). 

The limit $\alpha \to \infty$ is a strong coupling one. A heuristic square completion in the second term of~\eqref{eq:intro-hamil} indicates that the number of field excitations is of order $\alpha^2$ in this limit. To obtain a well-defined limit we therefore multiply all terms involving the field degrees of freedom in~\eqref{eq:intro-hamil} by $\alpha^{-2}$. For the true Fr\"ohlich polaron model, this is equivalent to a change of length/energy units~\cite{Seiringer-19}. 

One can see the strong coupling regime as a quasi-classical limit (i.e. a semi-classical limit for the field degrees of freedom only) by redefining creators/annihilators in the manner
\begin{equation}\label{eq:scaleop}
\widehat{a}^\dagger (k) := \alpha^{-1} \widehat{c}^\dagger (k), \quad \widehat{a} (k) := \alpha^{-1} \widehat{c} (k) 
\end{equation}
so that 
\begin{equation}\label{eq:intro-hamil-bis}
H_\alpha ^{(V)} := \left(-\Delta + V\right) \otimes \1 + \1 \otimes \int_{\R^d} T(k) \widehat{a}^\dagger (k) \widehat{a} (k) + \int_{\R^d} \left( e^{\im k\cdot x} \widehat{v} (k) \widehat{a}^{\dagger} (k) + \mathrm{h.c.} \right)  
\end{equation}
and
\begin{equation}\label{eq:intro CCR}
[ \widehat{a} (k), \widehat{a}^\dagger (k')] = \alpha^{-2} \delta_{k=k'}, \quad  [ \widehat{a} (k), \widehat{a} (k')] = 0, \quad [ \widehat{a}^\dagger (k), \widehat{a}^\dagger (k')] = 0
\end{equation}
for all $k,k'\in \R^d$. The data of the problem are 
\begin{itemize}
 \item The field's dispersion relation $T:\R^d \mapsto \R^+$ for which we assume a gap at $0$ (i.e. $T(k) \geq c >0$ for all $k$), to avoid infrared problems. In polaron models one typically takes $T \equiv 1$. 
 \item The field-particle interaction potential $v\in L^2 (\R^d)$. For the Fr\"ohlich polaron one should consider a singular dipole-charge interaction, something we could include with extra effort. 
 \item The external potential $V:\R^d \mapsto \R^-$. Our point is that it can be arbitrarily small (but negative), so that we impose $V(x) \underset{|x| \to \infty}{\to} 0$. 
\end{itemize}
We give more precise definitions and assumptions below. Our main goal is to show that binding holds in the $\alpha\to \infty$ limit for arbitrary $V$ \emph{decaying at infinity}, thus generalizing results of~\cite{CorFalOli-20} applying to trapping potentials. We opt to consider only the case $v\in L^2(\R^d)$ because the difficulties linked to singular $v$ on the one hand, and to lack of trapping on the other hand are rather orthogonal.

Our analysis bears on sequences of approximate ground states for $H_\alpha^{(V)}$, whose energy reproduce the infimum of the spectrum up to small corrections in the limit $\alpha \to \infty$. Let a sequence $(\Psi_\alpha)_\alpha \in \gH$ be such that 
\begin{equation}\label{eq:approx GS}
\langle \Psi_\alpha | H_\alpha^{(V)} \Psi_\alpha\rangle_{\gH} \leq \inf \sigma (H_\alpha^{(V)}) +o_{\alpha} (1), \quad \left\Vert\Psi_\alpha \right\Vert_\gH = 1. 
\end{equation}
In particular we can take for each $\alpha$ a sequence $(\Psi_{n,\alpha})_n$  such that  
\begin{equation*} \langle \Psi_{n,\alpha} | H_\alpha^{(V)} \Psi_{n,\alpha} \rangle_{\gH} \underset{n\to \infty}{\to} \inf \sigma (H_\alpha^{(V)})\end{equation*}
and diagonally extract a subsequence $(\Psi_{n,\alpha(n)})_n$. A consequence of our main results below is that if 
\begin{equation}\label{eq:intro DM}
\gamma_\alpha := \Tr_{\gF(L^2 (\R^d))} \left( |\Psi_\alpha \rangle \langle \Psi_\alpha|\right)  
\end{equation}
is the particle's (reduced) density matrix then 
\begin{equation}\label{eq:intro main 1}
\boxed{\gamma_\alpha \underset{\alpha \to \infty}{\to} \int |u\rangle \langle u| dP (u) \;\;\mbox{ strongly in trace-class norm}}
\end{equation}
where $P$ is a Borel probability measure on the set of minimizers of the quasi-classical Pekar functional obtained as 
\begin{equation*} \cEP^{(V)} [u] := \min_{\varphi \in L^2 (\R^d)} \langle u \otimes \xi (\varphi) | H_\alpha^{(V)} u \otimes \xi (\varphi)\rangle_\gH.\end{equation*}
Here
\begin{equation*} \xi (\varphi) = e^{a^\dagger (\varphi) -a (\varphi) } 1 \oplus 0 \oplus 0 \oplus \ldots \in \gF (L^2 (\R^d))\end{equation*}
is a coherent state of the field (the definition of $a(\varphi),a^\dagger (\varphi)$ is recalled in~\eqref{eq:annihi} below). The above means that an arbitrarily small potential well is sufficient to trap/bind the particle in the quasi-classical limit. 
%

\medskip

{\small\noindent\textbf{Acknowledgments:} Funding from the European Research Council
(ERC) under the European Union's Horizon 2020 Research and Innovation
Programme (Grant agreement CORFRONMAT No 758620) is gratefully
acknowledged. M.F.\ also acknowledges the support of the MUR grant
``Dipartimento di Eccellenza 2023-2027'', and of the ``Centro Nazionale di
ricerca in HPC, Big Data and Quantum Computing''.}

\section{Main results}\label{sec:main}

\subsection{Model}

The natural Hilbert space for a system composed by one particle in $\mathbb{R}^d$ and a quantum bosonic field is
\begin{equation}
\mathfrak{H}=L^2_\mathrm{part}(\mathbb{R}^d)\otimes \mathfrak{F},
\end{equation}
with the bosonic Fock space
\begin{equation} \label{eq:F_alpha}
\mathfrak{F}:= \mathfrak{F}\big(L^2_\mathrm{field}(\mathbb{R}^d)\big)=\bigoplus_{n=0}^\infty \left( L^2_\mathrm{field}(\mathbb{R}^d)\right)^{\otimes_\mathrm{sym}n}.
\end{equation}
In most of the sequel, the $\alpha$ dependence of the model will be encoded into the fact that the annihilation operator $a(f)$ acting on $\gF$ as \begin{equation}\label{eq:annihi}
 \left(a(f) \Psi_n \right) (x_1,\ldots,x_{n-1})= \alpha^{-1}\sqrt{n} \int_{\R^d} \overline{f(x)} \Psi_n (x,x_1,\ldots,x_{n-1})dx, \; \forall\, \Psi_n \in L^2_{\rm sym} (\R^{dn}) 
\end{equation}
and its adjoint $a^\dagger(f)$ satisfy the rescaled Canonical Commutation Relations (CCR)
\begin{equation} \label{eq:CCR}
[ a(f),a(g)]=[a^\dagger(f),a^\dagger(g)]=0,\qquad[a(f),a^\dagger(g)]=\frac{\langle f,g\rangle}{\alpha^2}, \qquad \forall f,g\in L_\mathrm{field}^2(\mathbb{R}^d).
\end{equation}

We still denote by $a(f),a^\dagger(g)$ the operators on $\mathfrak{H}$ which act as~\eqref{eq:CCR} on the factor $\mathfrak{F}$ and as the identity on the factor $L^2_\mathrm{part}(\mathbb{R}^d)$. We associate to these operators the operator-valued distributions $a_x,a^\dagger_x$ defined by
\begin{equation}
a(f)=\int_{\R^d} \overline{f(x)} a_x\, dx,\qquad a^\dagger(f)=\int_{\R^d} f(x) a^\dagger_x\, dx,
\end{equation}
together with their Fourier transforms
\begin{equation}
\widehat{a}_k= \frac{1}{(2\pi)^{d/2}}\int_{\R^d} e^{ik\cdot x} a_x\, dx,\qquad \widehat{a}^\dagger_k=\frac{1}{(2\pi)^{d/2}} \int_{\R^d} e^{-ik\cdot x}a^\dagger_x\,dx
\end{equation}
and the number operator
\begin{equation}\label{eq:scaled number}
\cN_\alpha := \int_{\R^d} \ada_x a_x dx = \int_{\R^d} \widehat{a}^\dagger_k \widehat{a}_k dk.
\end{equation}
Our Hamiltonian acts on $\mathfrak{H}$ as
\begin{equation} \label{eq:H_alpha^V}
\begin{split}
H^{(V)}_\alpha=\;&\left(-\Delta_x+ V(x)\right)\otimes \1+ \1\otimes \cN_\alpha + \int_{\R^d} \left(e^{ik\cdot x}\widehat v(k)\widehat {a}^\dagger_k+e^{-ik\cdot x}\overline{ \widehat {v}(k)}\widehat a_k\right)dk\\
=\;&\left(-\Delta_x+ V(x)\right)\otimes \1+ \1\otimes \cN_\alpha + \int_{\R^d} \left(v(x-y)a^\dagger_y+\overline{v(x-y)}a_y\right)dy
\end{split}
\end{equation}
We made the simplifying choice $T \equiv 1$ in~\eqref{eq:intro-hamil}, corresponding to the Fr\"ohlich polaron. We can easily accomodate functions such as 
\begin{equation*}T(k) = \left(|k|^2 + 1\right)^s, 0\leq s \leq 1\end{equation*}
i.e. a kinetic energy operator for the field such as $\left( 1 -\Delta \right)^s$. This requires only an extra use of an IMS-like localization formula (from~\cite[Appendix~B]{LenLew-11} for $s\neq 0,1$) in Lemma~\ref{lemma:field_localization} below.

Here $V$ is an external potential, $v$ is a function which couples the field and particle modes, and $\widehat{v}$ is its Fourier transform.
\begin{assumption}[\textbf{The external potential}]\mbox{} \label{assum:V}\\
	We assume that $V\in L^\infty_{\mathrm{part}}(\mathbb{R}^d)$ is a strictly negative function such that 
	\begin{equation}
	\lim_{|x|\to\infty} V(x)=0.
	\end{equation}
\end{assumption}

\begin{assumption}[\textbf{The interaction}]\mbox{}\label{assum:int}\\ 
	We assume that $v\in L^2(\mathbb{R}^d)$ is real-valued.
\end{assumption}

We define the ground state energy
\begin{equation}
E^{(V)}_\alpha=\inf \sigma (H^{(V)}_\alpha).
\end{equation}
It is known~\cite{DonVar-83,LieTho-97,MiySpo-07} that, to leading order as $\alpha\to\infty$, $E^{(V)}_\alpha$ is close to the minimal energy obtained through product trial states of the form 
\begin{equation*}\Psi=\psi\otimes \xi(u),\end{equation*} 
where $\psi\in L^2_\mathrm{part}(\mathbb{R}^d)$ is a particle wave-function and $\xi(u)\in \mathfrak{F}$ is the field coherent state defined by
\begin{equation} \label{eq:coherent_state}
	\xi(u)=e^{a^\dagger(u)-a(u)}\Omega \in \mathfrak{F}
\end{equation}
for $u\in L^2_\mathrm{field}(\mathbb{R}^d)$. Here 
\begin{equation*}\Omega=1\oplus0\oplus0\oplus\dots\in\mathfrak{F}\end{equation*}
is the vacuum vector.

The expectation of $H_\alpha^{(V)}$ in $\Psi=\psi\otimes \xi(u)$ reads
\begin{equation} \label{eq:qc_energy}
	\begin{split}
		\big\langle \psi\otimes \xi(u), H_\alpha^{(V)}\, \psi\otimes \xi(u)\big \rangle =\;& \int_{\mathbb{R}^d} |\nabla \psi(x)|^2dx+ \int_{\mathbb{R}^d} V(x)|\psi(x)|^2dx\\
		&+\|u\|_2^2+ \int_{\mathbb{R}^d\times \mathbb{R}^d}\left(u(y)+\overline{u(y)}\right)v(x-y)|\psi(x)|^2dxdy
	\end{split}
\end{equation}
Minimizing the above expression with respect to $u$ (which is tantamount to a square completion) yields\footnote{For $T= \left(|k|^2 + 1\right) ^s$ this is replaced by $u_\psi = -(1-\Delta)^{-s}\left(|\psi|^2 \ast v\right)$.} 
\begin{equation}\label{eq:fieldconfig}
u = u_\psi := - |\psi|^2 \ast v
\end{equation}
and thus the Pekar functional is 
\begin{equation} \label{eq:pekar_functional}
\begin{split}
\mathcal{E}_\mathrm{Pek}^{(V)}(\psi)=\;&\int_{\mathbb{R}^d}|\nabla\psi(x)|^2dx+ \int_{\mathbb{R}^d}V(x)|\psi(x)|^2dx\\
&-\iiint_{\mathbb{R}^d\times\mathbb{R}^d\times \mathbb{R}^d}v(x-y)v(x-z)\left|  \psi(y)\right|^2|\psi(z)|^2dxdydz.
\end{split}
\end{equation}
Observe that 
\begin{equation*} \iiint_{\mathbb{R}^d\times\mathbb{R}^d\times \mathbb{R}^d}v(x-y)v(x-z)\left|  \psi(y)\right|^2|\psi(z)|^2dxdydz = \iint_{\mathbb{R}^d\times\mathbb{R}^d} W(x-y) \left|  \psi(x)\right|^2|\psi(y)|^2 dxdy\end{equation*}
with 
\begin{equation}\label{eq:eff pot}
W(x-y) = \iint_{\mathbb{R}^d\times\mathbb{R}^d} v(z-x)v(z-y) dz = \iint_{\mathbb{R}^d\times\mathbb{R}^d} v(z)v(z+x-y) dz = \left(v\ast v\right) (y-x) 
\end{equation}
so that the effective interaction term in~\eqref{eq:pekar_functional} is always a non-positive/attractive pair interaction, in the sense that 
\begin{equation*}\widehat{W} (k) = \left|\widehat{v} (k)\right|^2 \geq 0.\end{equation*}
Also note that, since we assume $v\in L^2$ we have that $W\in L^\infty$ by Young's inequality.

\subsection{Statements}

We define the minimal Pekar energy at mass $m >0$ in the manner
\begin{equation}\label{eq:pekar_energy}
E_\mathrm{Pek}^{(V)}(m)=\inf\left\{ \mathcal{E}^{(V)}_\mathrm{Pek}(\psi)\;|\;\|\psi\|_{2}^2=m \right\} = \mathcal{E}_\mathrm{Pek}^{(V)}\left( \psi_{V,m} \right),
\end{equation}
with the convention that $E_\mathrm{Pek}^{(0)}(m)$ is the minimal translation-invariant Pekar energy corresponding to the choice $V=0$. We denote 
\begin{equation}\label{eq:minimizers}
\cM _\mathrm{Pek}^{(V)}(m) = \left\{\psi\in L^2 (\R^d)\;|\; \|\psi\|_{2}^2=m,\, \mathcal{E}_\mathrm{Pek}^{(V)}\left( \psi \right) = E_\mathrm{Pek}^{(V)}(m) \right\}.
\end{equation}
That the above is not empty  (i.e. that Pekar minimizers always exist) follows from the usual concentration-compactness method as in~\cite{LewRou-13b,FroJonLen-07b,Ricaud-14} for example, or by using rearrangement inequalities as in~\cite{Lieb-77} if $W$ is assumed radial.

It follows from the results/methods of~\cite{LieTho-97,CorFalOli-20,DonVar-83} that 
\begin{equation} \label{eq:energy_convergence}
\lim_{\alpha\to\infty} E^{(V)}_\alpha =E^{(V)}_\mathrm{Pek}(1).
\end{equation}
We shall revisit a proof of~\eqref{eq:energy_convergence} along the lines of~\cite{CorFalOli-20} for completeness, providing in particular an alternative construction of the quasi-classical measures used as main tools.

In this paper we are particularly interested in the associated convergence of states, which is our main result:

\begin{theorem}[\textbf{Convergence of states in the quasi-classical limit}]\label{thm:main}\mbox{}\\
Let $\Psi_\alpha \in\mathfrak{H}$ be a (family of) normalized vector(s) such that
\begin{equation} \label{eq:minimizing_sequence}
\langle \Psi_\alpha , H^{(V)}_\alpha\Psi_\alpha\rangle\le E^{(V)}_\alpha+o_\alpha(1).
\end{equation}
Let $k,\ell$ be non-negative integers with $k+\ell \leq 2$. Modulo extraction of a subsequence in $\alpha$, for every bounded $A\in \mathcal{B}\big( L^2_\mathrm{part}(\mathbb{R}^d) \big)$ and for every $f_1,\dots,f_k,g_1,\dots,g_\ell\in L^2_\mathrm{field}(\mathbb{R}^d)$,
\begin{multline}\label{eq:mainCV}
	\sqrt{k!\ell!}\, \Big\langle \Psi_\alpha, A \otimes a^\dagger(g_1)\dots a^\dagger(g_\ell) a(f_1)\dots a(f_k) \Psi_\alpha \Big \rangle \underset{\alpha \to \infty}{\to} \\ \int_{\psi \in \cM _\mathrm{Pek}^{(V)}(1)} \langle \psi, A \psi \rangle_{L^2}  \prod_{j=1}^k\langle f_j,u_\psi \rangle \prod_{j=1}^\ell\langle u_\psi, g_j\rangle\,dP(\psi),
\end{multline}
where $P$ is a probability measure over the set of Pekar minimizers $\cM _\mathrm{Pek}^{(V)}(1)$ at mass $1$ and $u_\psi$ is defined as in~\eqref{eq:fieldconfig}.
%
\end{theorem}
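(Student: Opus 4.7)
The plan is to combine the construction of quasi-classical measures (revisited earlier in this paper) with a concentration-compactness analysis that prevents loss of the particle's mass at infinity, using strict binding. Using the a priori bound $\langle \Psi_\alpha, \cN_\alpha^j \Psi_\alpha\rangle = O(1)$ for $j=1,2$---easily extracted from the energy upper bound by square completion and the standard inequality $\|a(v)\Psi\|^2\leq \|v\|^2\langle \Psi,\cN_\alpha\Psi\rangle$---I would first extract from $(\Psi_\alpha)$ a subsequence and a Borel measure $\mathfrak{M}$ on $L^2_{\mathrm{field}}(\R^d)$ with values in positive trace-class operators $\gamma_u$ on $L^2_{\mathrm{part}}(\R^d)$, such that
\begin{multline*}
\sqrt{k!\ell!}\,\big\langle \Psi_\alpha, A \otimes a^\dagger(g_1)\cdots a^\dagger(g_\ell)a(f_1)\cdots a(f_k)\Psi_\alpha\big\rangle \\ \underset{\alpha \to \infty}{\longrightarrow} \int_{L^2_{\mathrm{field}}(\R^d)} \Tr(A \gamma_u) \prod_{j=1}^k\langle f_j,u\rangle \prod_{j=1}^\ell\langle u,g_j\rangle\, d\mathfrak{M}(u)
\end{multline*}
for every bounded $A$ and every $k+\ell\leq 2$.

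Next I would identify $\gamma_u$ and $u$ from energy minimality. Expanding $\langle\Psi_\alpha,H_\alpha^{(V)}\Psi_\alpha\rangle$ in terms of these moments, passing to the $\liminf$ and using $E_\alpha^{(V)}\to E_{\mathrm{Pek}}^{(V)}(1)$, produces a lower bound $E_{\mathrm{Pek}}^{(V)}(1) \geq \int \cE_{\mathrm{qc}}(\gamma_u,u)\,d\mathfrak{M}(u)$, where $\cE_{\mathrm{qc}}$ is the quasi-classical Pekar-type functional on mixed particle states paired with classical field configurations. Completing the square in $u$ at fixed $\gamma_u$ then forces, $\mathfrak{M}$-almost surely, a de Finetti-like decomposition $\gamma_u=\int |\psi\rangle\langle\psi|\,dP_u(\psi)$, with $P_u$ supported on Pekar minimizers at mass $\Tr\gamma_u$ satisfying $u=u_\psi$. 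Disintegration yields the sought probability measure $P$, modulo the mass constraint $\|\psi\|_2^2=1$. To enforce this constraint---that is, to show $\Tr\gamma_u=1$ for $\mathfrak{M}$-a.e.\ $u$, which simultaneously upgrades to strong trace-class convergence and pins $P$ on $\cM_{\mathrm{Pek}}^{(V)}(1)$---I would run a concentration-compactness argument with an IMS-type particle localization combined with the field localization Lemma~\ref{lemma:field_localization}, splitting each $\Psi_\alpha$ into a piece near the origin and one escaping to infinity. Escape of mass $m\in(0,1]$ would be ruled out by the strict binding inequality
\begin{equation*}
E_{\mathrm{Pek}}^{(V)}(1) < E_{\mathrm{Pek}}^{(V)}(1-m) + E_{\mathrm{Pek}}^{(0)}(m), \qquad 0 < m \leq 1,
\end{equation*}
itself a consequence of $V<0$ together with the strict subadditivity of $E_{\mathrm{Pek}}^{(0)}$ (due to the attractive effective pair interaction $\widehat{W}=|\widehat v|^2\geq 0$).

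The hard part will be the concentration-compactness step, precisely because $V$ may be arbitrarily small and the binding is then almost entirely self-binding by the field. One must quantify the energy defect in the joint particle-field localization so that it decays faster than the strict binding gap, and carefully retain the cross Pekar-interaction terms between the two localized pieces to prevent them from artificially lowering the apparent energy of the split configuration. This quantitative control of the localization error against a potentially very small binding margin is the delicate point on which the whole argument rests.
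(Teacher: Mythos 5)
Your overall strategy (quasi-classical measures plus localization and concentration-compactness to rule out escape of mass) matches the paper's, but several specific claims are wrong or make the argument harder than it needs to be.

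First, the a priori bound $\langle\Psi_\alpha,\cN_\alpha^2\Psi_\alpha\rangle=O(1)$ that you assert is \emph{not} obtainable from the energy upper bound by square completion; that only controls $\langle\cN_\alpha\rangle$. The paper's Remark~(3) after Theorem~\ref{thm:main} states explicitly that for general quasi-minimizing sequences only the first moment of $\cN$ is under control, and that second-moment bounds would require the variational equation and pull-through formulae, available only for true eigenstates. Fortunately the proof needs only $\kappa=1$ in Theorem~\ref{thm:deF comp}, which already covers $k+\ell\leq2$, so the claim is superfluous as well as unjustified, but as written it is an error.

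Second, the first extraction step cannot directly deliver a measure with values in \emph{trace-class} operators $\gamma_u$. Absent a trapping operator in the particle slot (cf.\ Corollary~\ref{cor:1} and the remark preceding its proof), Theorem~\ref{thm:deF comp} only gives $\mu$-a.s.\ \emph{abstract} states $\omega_u\in\cS(\gh)$. Establishing that these are $\mu$-a.s.\ normal (hence trace-class with unit trace) is the content of Lemma~\ref{lem:mass}, and it is an output of the no-loss-of-mass argument, not an input. Presenting it as part of the initial extraction hides the key difficulty.

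Third, the strict binding inequality you invoke,
\begin{equation*}
E_{\mathrm{Pek}}^{(V)}(1) < E_{\mathrm{Pek}}^{(V)}(1-m)+E_{\mathrm{Pek}}^{(0)}(m),\qquad 0<m\leq 1,
\end{equation*}
is stronger than what the paper uses and would require proving strict subadditivity of $E_{\mathrm{Pek}}^{(0)}$. The paper instead bounds the localized pieces of the energy from below by $\Tr(\Gamma_{\alpha,\chi_R})\,E_\alpha^{(V)}$ and $\Tr(\Gamma_{\alpha,\eta_R})\,E_\alpha^{(0)}$ (the bottoms of the spectra of the full Hamiltonians, not the mass-constrained Pekar energies) and then passes to the limit using the scalar relation $\Tr\Gamma_{\alpha,\chi_R}+\Tr\Gamma_{\alpha,\eta_R}=1$ from~\eqref{eq:localized_traces}. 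This only requires the single inequality $E_{\mathrm{Pek}}^{(V)}(1)<E_{\mathrm{Pek}}^{(0)}(1)$, an immediate consequence of $V<0$.

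Finally, your concern about ``quantifying the localization defect against a potentially very small binding margin'' is misplaced. Once $V$ is fixed, the gap $E_{\mathrm{Pek}}^{(0)}(1)-E_{\mathrm{Pek}}^{(V)}(1)>0$ is a fixed constant, while the localization errors in Proposition~\ref{prop:localization} vanish as $R\to\infty$ uniformly in $\alpha$; no quantitative comparison between the two is needed. The real technical work in the interaction localization is the Helly-selection argument in Lemma~\ref{lemma:interaction_localization} handling the cross terms, which your sketch does not address.
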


A few comments:
\begin{enumerate}
 \item Picking $k=\ell=0$ in~\eqref{eq:mainCV} gives 
 \begin{equation*} \Tr\left[\gamma_\alpha A \right] \underset{\alpha \to \infty}{\to} \int_{\psi \in \cM _\mathrm{Pek}^{(V)}(1)} \langle \psi, A \psi \rangle_{L^2} dP (\psi)\end{equation*}
 for any bounded operator $A$, where the particle reduced density matrix is defined as in~\eqref{eq:intro DM}. Hence 
 \begin{equation*} \gamma_\alpha \overset{\mathrm{*}}{\wto} \int_{\psi \in \cM _\mathrm{Pek}^{(V)}(1)} |
   \psi \rangle \langle \psi | dP (\psi)
   \end{equation*} weakly-star in the trace-class. Since $P$ is a
 probability, the right-hand side has trace $1$. Hence the trace of $\gamma_\alpha$
 converges. The latter equals the trace-class norm because $\gamma_\alpha \geq 0$. The
 convergence is thus strong in the
 trace-class (see~\cite{dellAntonio-67} or~\cite[Addendum~H]{Simon-79}), as claimed in~\eqref{eq:intro
   main 1}.
 \item Taking $A=\id$, $k=\ell = 1$ in~\eqref{eq:mainCV} and varying $f_1,g_1 \in
   L^2 (\R^d)$ yields that the field reduced density matrix described by the
   integral kernel
\begin{equation*} \Tr_{L^2_{\rm part} (\R^d)} \left[ | \Psi_\alpha \rangle \langle \Psi_\alpha | a^{\dagger}_xa_y\right]\end{equation*} 
converges weakly-$*$ in the trace-class to the operator with kernel
\begin{equation*}  \int_{\psi \in \cM _\mathrm{Pek}^{(V)}(1)} \overline{u_{\psi}(x)}u_{\psi}(y)  dP (\psi).\end{equation*}
Taking $k=1$ or $\ell = 1$, varying $A,f_1,g_1$, gives a similar convergence for what we shall call the field-particle density matrix in Section~\ref{sect:loc}. 
 \item The limitation $k+\ell \leq 2$ comes from the fact that, for general quasi-minimizing sequences, we only have a control via the energy on the expectation of the field excitation number 
 \begin{equation*}\cN =\sum_{j\geq 1} \id_{L^2(\R^d)} \otimes a^\dagger (f_j) a(f_j),\end{equation*} 
 with $(f_j)_j$ a orthonormal basis of $L^2_{\rm field} (\R^d)$. Under the stronger assumption that 
 \begin{equation*} \left\langle \Psi_\alpha | \cN^\kappa \Psi_\alpha \right\rangle \leq C_\kappa\end{equation*}
 independently of $\alpha$, we can allow for $k+\ell \leq 2 \kappa$ in the main result. If
 $\Psi_\alpha$ is a true eigenstate of the Hamiltonian (assuming such exist),
 estimates of this form follow from the variational equation and so-called
 pull-through formulae~\cite{Ammari-00,Rosen-71}.
 \item Again, since the external potential can be arbitrarily small, its only function is to break translation invariance. The binding/self-trapping only comes from the particle-field interaction. In particular, in space dimensions $d\geq 3$, the Cwikel-Lieb-Rosenblum inequality~\cite[Chapter~4 and references therein]{LieSei-09} ensures that if $\| V\|_{L^{d/2} (\R^d)}$ is small enough, the Schr\"odinger operator $-\Delta + V$ acting on the particle has no bound states.
 \item Our proof does not require a purely negative external potential $V$, but only that 
 \begin{equation*} E_\mathrm{Pek}^{(V)}(1) < E_\mathrm{Pek}^{(0)}(1)\end{equation*}
 which is certainly the case for $V < 0$ by using a translation-invariant ground state as trial state for the functional with trapping potential.
 \end{enumerate}

\subsection{Organization of the paper}	

In essence we combine the philosophies of~\cite{CorFalOli-20} and~\cite{LewNamRou-14,LewNamRou-14d}: quasi-classical measures, and systematic combination thereof with localization methods. This allows  a concentration-compactness-type analysis of the many-body problem in the quasi-classical limit reminiscent of what was performed in~\cite{LewNamRou-14} for the Bose gas in the mean-field limit (see~\cite{Rougerie-spartacus,Rougerie-LMU,Rougerie-EMS} for review).

We will start by defining reduced particle/field/field-particle density matrices in Section~\ref{sect:loc}. An important tool is then to define states localized in a given region of space, such that ``the reduced densities of the localized state are the localizations of the densities of the initial state'' in the spirit of~\cite{Lewin-11} and references therein.

With this we may split the energy into the contribution of the region localized close to the potential well, and the contribution of the complement. Using~\eqref{eq:energy_convergence} for both terms separately and simple binding properties of the classical Pekar energies (essentially that the classical energy in the potential well is smaller), we conclude that it is energetically favorable to have all the mass concentrated close to the potential well, which leads to binding/strong convergence. 

We find it useful to revisit the construction of quasi-classical measures in Section~\ref{sect:deF}. This was performed in~\cite{CorFal-18,CorFalOli-19,CorFalOli-19b,CorFalOli-20} based on a Weyl quantization approach building on~\cite{AmmNie-08,AmmNie-09}. We provide an alternative construction yielding slightly stronger results by using anti-Wick quantization as in~\cite{LewNamRou-14,LewNamRou-14d}, combining  with ideas from~\cite{FanLewVer-88}. 

Using the simplified construction of the measures, we give a self-contained proof of~\eqref{eq:energy_convergence} in Section~\ref{sec:energy} for completeness, and because some of the steps are re-used when finally completing the proof of Theorem \ref{thm:main}  in Section~\ref{sec:limit}.

	\section{Reduced densities, localization of states and localization of energies} \label{sect:loc}
	
\subsection{Reduced densities}\label{sec:DMs}	\mbox{}
	
	\smallskip
	
	\noindent\textbf{Identifying states on $\mathfrak{H}=L^2_\mathrm{part}(\mathbb{R}^d)\otimes\mathfrak{F}$ with $\mathfrak{F}$-valued functions.} We recall that, by the Schmidt decomposition, any $\Psi\in \mathfrak{H}$ can be written as
	\begin{equation} \label{eq:schmidt}
	\Psi= \sum_{j=1}^\infty c_j u_j\otimes v_j,
	\end{equation}
	where $\{u_j\}_{j\in\mathbb{N}}$ is a suitable orthonormal set in $L^2_\mathrm{part}(\mathbb{R}^d)$, $\{v_j\}_{j\in\mathbb{N}}$ is a suitable orthonormal set in $\mathfrak{F}$, and $c_j\ge0$ for all $j$. By construction then $\|\Psi\|^2=\sum_{j=1}^\infty c_j^2$.
	
	We also recall that $\Psi\in\mathfrak{H}$ can be written canonically as an element of the space $L^2(\mathbb{R^d},\mathfrak{F})$. If $\Psi= u\otimes v$, where $u\in L^2_\mathrm{part}(\mathbb{R}^d)$ and $v\in\mathfrak{F}$, is a factorized state, then $\Psi$ is identified with the function $\Psi(\cdot):\mathbb{R}^d\to \mathfrak{F}$ whose action is
	\begin{equation} \label{eq:hilbert_identification}
	\Psi(x)=u(x) v.
	\end{equation}
	This definition is then extended by linearity to the whole $\mathfrak{H}$. 
	
	The identification \eqref{eq:hilbert_identification} between vectors induces a similar one for states on $\mathfrak{H}$. A pure (normal) state on $\mathfrak{H}$ is a rank-one projection $\Gamma=\ket{\Psi}\bra{\Psi}$, where $\Psi\in\mathfrak{H}$ has unit norm. By \eqref{eq:schmidt} there exists an orthonormal set $\{u_j\}_{j\in\mathbb{N}}$ of elements of $L^2_\mathrm{part}(\mathbb{R}^d)$, an orthonormal set $\{v_j\}_{j\in\mathbb{N}}$ of elements of $L^2_\mathrm{field}(\mathbb{R}^d)$, and coefficients $\{c_j\}_{j\in\mathbb{N}}$ such that
	\begin{equation} \label{eq:pure_state_decomp}
	\Gamma= \sum_{j,k=1}^\infty c_j\overline{c_k} | u_j \rangle \langle u_k | \otimes |v_j\rangle \langle v_k|.
	\end{equation}
	We naturally identify $\Gamma$ with the rank-one projection on $L^2(\mathbb{R}^d,\mathfrak{F})$ whose integral kernel is
	\begin{equation} \label{eq:Gamma_x_y}
	\Gamma(x,y)= \sum_{j,k=1}^\infty c_j\overline{c_k} u_j(x)\overline{u_k(y)} | v_j \rangle \langle v_k|.
	\end{equation}
	The identification is then extended by linearity to mixed states. Since $\Gamma(x,y)$ is of the form $|V(y)\rangle \langle U(x)|$, it is a trace-class operator on $\mathfrak{F}$ for almost every $(x,y)\in\mathbb{R}^d\times \mathbb{R}^d$. We also note that $\Gamma(x,x)\ge0$ for almost every $x\in\mathbb{R}^d$.\bigskip
	
	\noindent\textbf{Reduced density matrices.}	For generic states $\Gamma$ on $\mathfrak{H}$ we will define objects that monitor the state of the two subsystems (i.e., the particle and the bosonic field). Let us first recall the standard definitions of reduced density matrices in Fock space. Let $\Gamma_{\mathfrak{F}}$ be a state on $\mathfrak{F}$ satisfying
	\begin{equation*}
		\mathrm{Tr}_{\mathfrak{F}}\left[\mathcal{N}^{k}\Gamma_{\mathfrak{F}}\right]<+\infty.
	\end{equation*}
	for some $k\in\mathbb{N}$. For $p,q\in\mathbb{N}\cup\{0\}$, the $(p,q)$-reduced density matrix associated to $\Gamma_{\mathfrak{F}}$ is the operator 
	\begin{equation*}
	\Gamma_{\mathfrak{F}}^{(p,q)}:\left(L^2_\mathrm{field}(\mathbb{R}^d)\right)^{\otimes_\mathrm{sym}q} \to \left(L^2_\mathrm{field}(\mathbb{R}^d)\right)^{\otimes_\mathrm{sym}p}
	\end{equation*}
	defined by the relation
	\begin{equation} \label{eq:reduced_density_fock}
	\left\langle g_1\otimes_\mathrm{sym} \dots \otimes_\mathrm{sym} g_p,\Gamma_{\mathfrak{F}}^{(p,q)} f_1\otimes_\mathrm{sym}\dots \otimes_\mathrm{sym} f_q\right\rangle=\mathrm{Tr}_{\mathfrak{F}} \left( \Gamma_{\mathfrak{F}} \,a^\dagger(f_1)\dots a^\dagger (f_p) a(g_1)\dots a(g_q) \right)
	\end{equation}
	for $g_1,\dots,g_p,f_1,\dots,f_q \in L^2_\mathrm{field}(\mathbb{R}^d)$.
	We will mostly use $\Gamma_{\mathfrak{F}}^{(1,1)}$, $\Gamma_{\mathfrak{F}}^{(1,0)}$, and $\Gamma_{\mathfrak{F}}^{(0,1)}$. For the latter two the above definition reduces to 
	\begin{equation}
	\begin{split}
	\big\langle g,\Gamma_{\mathfrak{F}}^{(1,0)}\big\rangle=\;&\mathrm{Tr}_{\mathfrak{F}} \big[\Gamma_{\mathfrak{F}}\, a(g)\big]\\
	\big\langle \big(\Gamma_{\mathfrak{F}}^{(0,1)}\big)^*, f\big\rangle=\;&\mathrm{Tr}_{\mathfrak{F}}\big[  \Gamma_{\mathfrak{F}}\, a^\dagger (f)\big]= \overline{\langle f,[\gamma]^{(1,0)}\rangle}.
	\end{split}
	\end{equation}
	or, in terms of operator-valued distributions,
	\begin{equation}
	\Gamma_{\mathfrak{F}}^{(1,0)}(x)= \mathrm{Tr}_{\mathfrak{F}}[\gamma a_x]=\overline{[\gamma]^{(0,1)}(x)}.
	\end{equation}
	Moreover, for $\Gamma_{\mathfrak{F}}^{(1,1)}$ we have the relation
	\begin{equation}
	\begin{split}
	\mathrm{Tr}_{L^2_\mathrm{field}(\mathbb{R}^d)} \left( \Gamma_{\mathfrak{F}}^{(1,1)}\right) = \mathrm{Tr}_{\mathfrak{F}} \big( \mathcal{N}\Gamma_{\mathfrak{F}}\big).
	\end{split}
	\end{equation}
	
	We next define reduced density matrices for states on the full Hilbert space $\mathfrak{H}$.
	
	\begin{definition}[\textbf{Reduced density matrices for particle and field}]\mbox{} \label{def:reduced_densities} \\
	Let $\Gamma$ be a positive trace-class operator with unit trace on $\mathfrak{H}$, with the further property
	\begin{equation*}
		\mathrm{Tr}_{\mathfrak{H}} \big[\mathbbm{1}\otimes \mathcal{N}\,\Gamma\big]<+\infty.
	\end{equation*}
	We define the associated
	\begin{itemize}
		\item \emph{particle reduced density matrix} as the unit-trace, positive, trace-class operator $\gamma$ on $L^2_\mathrm{part}(\mathbb{R}^d)$ defined through the partial trace
		\begin{equation}
		\gamma=\mathrm{Tr}_{\mathfrak{F}}\big(\Gamma\big),
		\end{equation}
		or, equivalently, as the operator with integral kernel
		\begin{equation}			\gamma(x,y)=\;\mathrm{Tr}_{\mathfrak{F}}\big(\Gamma(x,y)\big).
		\end{equation}
		Notice that, as a consequence of \eqref{eq:Gamma_x_y} and its extension to mixed states, $\Gamma(x,y)$ is indeed trace-class for almost every $x,y\in\mathbb{R}^d$.
		\item \emph{field one-body reduced density matrix} as the unit-trace, positive, trace-class operator $\Gamma^{(1,1)}$ on $L^2_\mathrm{field}(\mathbb{R}^d)$ defined by
		\begin{equation} \label{eq:def_rho_field}
		\Gamma^{(1,1)}= \left[\mathrm{Tr}_{L^2_\mathrm{part}(\mathbb{R}^d)}\big(\Gamma\big)\right]^{(1,1)}=\left[\int_{\mathbb{R}^d}\Gamma(x,x) dx\right]^{(1,1)}.
		\end{equation}
		\item \emph{field-particle reduced density matrix} as the operator-valued linear map 
		\begin{equation*}\sigma: L^2_\mathrm{field}(\mathbb{R}^d)\to \cB (L^2_\mathrm{part}(\mathbb{R}^d))\end{equation*}
		whose action on a generic $f\in L^2_\mathrm{field}(\mathbb{R}^d)$ is defined by
		\begin{equation}
		\left\langle u, \sigma(f) v\right\rangle=\mathrm{Tr}\big( |v\rangle \langle u| \otimes \left( a^\dagger(f) + a (f) \right)\,\Gamma\big),\qquad\forall u,v\in L^2_\mathrm{part}(\mathbb{R}^d).
		\end{equation} 
		\end{itemize}
		\hfill$\diamond$
	\end{definition}
	
	We have the following properties of the field-particle reduced density matrix:
	
	\begin{lemma}[\textbf{Field-particle reduced density matrix}]\label{lem:redmat}\mbox{}\\
	 The field-particle reduced density matrix defined above takes values in the trace-class:
	 \begin{equation*} \sigma: L^2_\mathrm{field}(\mathbb{R}^d)\to \cL^1 (L^2_\mathrm{part}(\mathbb{R}^d)).\end{equation*}
	 Denote $(\sigma(f))(x,y)$ the integral kernel of $\sigma(f)$ and 
	 \begin{equation*}
		\sigma(x,x;z)=\mathrm{Tr}_{\mathfrak{F}}\big[\Gamma(x,x) \left( a^\dagger_z + a_z \right)\big]
	\end{equation*}
	as an operator-valued distribution satisfying
	\begin{equation*}
		(\sigma(f))(x,x)=\int_{\mathbb{R}^d}{{f(z)}}  \sigma(x,x;z)\,dz.
	\end{equation*}
	The distribution $\sigma(x,x;z)$ is in fact a function in $L^1_x\big(L^2_z(\mathbb{R^d})\big)$.
	\end{lemma}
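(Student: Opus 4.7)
The three assertions of the lemma all follow from one unified tool: the standard relative bounds
\begin{equation*}
\|a(f)\Psi\|_{\gH}\le \|f\|_2\|\cN^{1/2}\Psi\|_{\gH},\qquad \|a^\dagger(f)\Psi\|_{\gH}\le \|f\|_2\|(\cN+\alpha^{-2})^{1/2}\Psi\|_{\gH},
\end{equation*}
which come directly from the rescaled CCR~\eqref{eq:CCR}, together with the finite-excitation-number hypothesis $\Tr_{\gH}[(\1\otimes\cN)\Gamma]<+\infty$. Beyond these, only Cauchy-Schwarz and the Riesz representation theorem are needed.

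For the trace-class bound on $\sigma(f)$ I would argue by duality: for $A\in\cB(L^2_{\mathrm{part}}(\R^d))$ with $\|A\|\le 1$, use a spectral decomposition $\Gamma=\sum_j\lambda_j|\Psi_j\rangle\langle\Psi_j|$ to bound each scalar product $|\langle\Psi_j,(A\otimes(a(f)+a^\dagger(f)))\Psi_j\rangle|$ by $\|A\|\|f\|_2\|(\cN+\alpha^{-2})^{1/2}\Psi_j\|$. Summing against $\lambda_j$ and applying Cauchy–Schwarz in $j$ yields $|\Tr_\gH((A\otimes(a+a^\dagger)(f))\Gamma)|\le C\|f\|_2\sqrt{\Tr_\gH[(\1\otimes(\cN+\alpha^{-2}))\Gamma]}$, and taking the supremum over such $A$ gives $\sigma(f)\in\cL^1(L^2_{\mathrm{part}}(\R^d))$ with $\|\sigma(f)\|_{\cL^1}\le C\|f\|_2$.

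For the kernel interpretation, the identification~\eqref{eq:hilbert_identification} produces, for a.e. $(x,y)$, the representation $(\sigma(f))(x,y)=\Tr_\gF[(a(f)+a^\dagger(f))\Gamma(y,x)]$, and in particular $(\sigma(f))(x,x)=\Tr_\gF[(a(f)+a^\dagger(f))\Gamma(x,x)]$ for a.e.\ $x$. To introduce $\sigma(x,x;z)$, I fix such an $x$ for which $\Gamma(x,x)$ is moreover trace-class on $\gF$ with $\Tr_\gF[\cN\Gamma(x,x)]<+\infty$ (almost every $x$, by Fubini applied to $\Tr_\gH[(\1\otimes\cN)\Gamma]$), and consider the antilinear functional $L_x^a:f\mapsto \Tr_\gF[a(f)\Gamma(x,x)]$ on $L^2_{\mathrm{field}}(\R^d)$. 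Running the same Cauchy–Schwarz argument on the spectral decomposition of $\Gamma(x,x)$ gives $|L_x^a(f)|\le \|f\|_2\sqrt{\Tr_\gF\Gamma(x,x)}\sqrt{\Tr_\gF[\cN\Gamma(x,x)]}$, so by Riesz there is $\eta_x^a\in L^2_{\mathrm{field}}$ with $L_x^a(f)=\langle f,\eta_x^a\rangle$; I then \emph{define} $\Tr_\gF[a_z\Gamma(x,x)]:=\eta_x^a(z)$, and handle the $a^\dagger$ piece identically with $\cN+\alpha^{-2}$ in place of $\cN$. Adding the two contributions yields $\sigma(x,x;\cdot)\in L^2_z$ for a.e.\ $x$, and the identity $(\sigma(f))(x,x)=\int f(z)\sigma(x,x;z)\,dz$ is then just the unfolding of the Riesz pairing.

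For the $L^1_x(L^2_z)$-integrability, the same Riesz bound gives $\|\sigma(x,x;\cdot)\|_{L^2_z}\le C\sqrt{\Tr_\gF\Gamma(x,x)}\sqrt{\Tr_\gF[(\cN+\alpha^{-2})\Gamma(x,x)]}$; integrating in $x$ and applying Cauchy–Schwarz once more to decouple the two square-roots produces a finite bound controlled by $\Tr_\gH\Gamma$ and $\Tr_\gH[(\1\otimes(\cN+\alpha^{-2}))\Gamma]$. The one genuinely non-routine point, on which I would spend just a sentence, is the strong measurability of $x\mapsto \eta^a_x\in L^2_{\mathrm{field}}$ needed for the Bochner integral on the left-hand side to make sense; this is supplied by Pettis's theorem, since weak measurability comes from testing against $f$ in a countable dense subset of the separable space $L^2_{\mathrm{field}}(\R^d)$.
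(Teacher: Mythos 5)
Your overall strategy is sound and parallels the paper's in spirit: get uniform bounds from the relative estimates $\|a^{\#}(f)\Psi\|\leq C\|f\|\,\|(\cN+1)^{1/2}\Psi\|$ and propagate them through the identification of kernels. The Riesz-representation route to define $\sigma(x,x;\cdot)\in L^2_z$ and the Pettis measurability remark are genuinely nice additions (the paper only sketches this via a direct Cauchy--Schwarz estimate on traces). However, there is a real gap in your trace-class argument.

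The step \emph{``taking the supremum over such $A$ gives $\sigma(f)\in \cL^1(L^2_{\mathrm{part}})$''} is not valid. A bounded linear functional on $\cB(L^2_{\mathrm{part}}(\R^d))$ need not be represented by a trace-class operator: the dual of $\cB(\gh)$ is strictly larger than $\cL^1(\gh)$ (it is the bidual of $\cL^1$, and contains singular functionals). Finiteness of $\sup_{\|A\|\le 1}|\Tr_\gH((A\otimes(a+a^\dagger)(f))\Gamma)|$ thus only tells you the functional is bounded, not that it is normal. To close the gap you can either (i) observe that the operator $T:=(\1\otimes(a(f)+a^\dagger(f)))\,\Gamma$ is itself trace-class on $\gH$ (because $(a+a^\dagger)(f)(\cN+1)^{-1/2}$ is bounded and $(\cN+1)^{1/2}\Gamma\in\cL^1(\gH)$ under $\Tr[(\1\otimes\cN)\Gamma]<\infty$), so that the functional $A\mapsto\Tr_\gH[(A\otimes\1)T]$ is automatically normal and $\sigma(f)=\Tr_\gF T$ is trace-class as a partial trace; or (ii) first take the supremum over \emph{compact} $A$ with $\|A\|\le1$, use $\cL^1=\cK^*$ to obtain a trace-class $\sigma(f)$ with $\|\sigma(f)\|_1\le C\|f\|_2$, and then verify by normality that the pairing extends to all bounded $A$. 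The paper sidesteps this issue entirely by writing $\sigma(f)=\sigma_+(f)-\sigma_-(f)$, where $\sigma_\pm(f)$ come from the positive and negative parts $(a(f)+a^\dagger(f))_\pm$: each $\sigma_\pm(f)$ is a partial trace of a manifestly positive operator, hence positive, and has finite trace directly by the relative bound, so no duality is needed. That decomposition is the cleaner route; your duality argument is recoverable but requires the normality remark above, which is precisely what you omitted.
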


	\begin{proof}
	To see that $\sigma(f)$ is indeed trace-class we may define it as an operator via the requirement
	\begin{align*} 
	\left\langle u,  \sigma (f) v \right\rangle &= \left\langle u, \sigma_+ (f) v \right\rangle - \left\langle u, \sigma_- (f) v \right\rangle\\
	&:= \mathrm{Tr}\big( |v \rangle \langle u| \otimes \left( a^\dagger(f) + a (f) \right)_+ \,\Gamma\big) - \mathrm{Tr}\big( |v \rangle \langle u| \otimes \left( a^\dagger(f) + a (f) \right)_- \,\Gamma\big)
	\end{align*}
	with $A_\pm$ the positive and negative parts of a self-adjoint operator. This way, if 
	\begin{equation*} \Tr \left( \1 \otimes \sqrt{\cN} \Gamma \right) < \infty\end{equation*}
	then $ \sigma (f)$ is the difference of two positive trace-class operators.

	Thus for every $f\in L^2_\mathrm{field}(\mathbb{R}^d)$, the integral kernel $(\sigma(f))(x,y)$ of $\sigma(f)$ is the function
	\begin{equation*}
		\left(\sigma(f)\right)(x,y)=\mathrm{Tr}_{\mathfrak{F}}\big[ \Gamma(x,y)\left( a^\dagger(f) + a (f) \right) \big].
	\end{equation*}
	Notice in particular that
	\begin{equation*}
		\int_{\mathbb{R}^d} |(\sigma(f))(x,x)|\,dx <+\infty
	\end{equation*}
	as is the case for the kernel of a trace-class operator. That $\sigma(x,x;z)$ is in $L^1_x\big(L^2_z(\mathbb{R^d})\big)$
	follows from
	\begin{equation} \label{eq:summability_sigma}
		\begin{split}
			\int_{\mathbb{R}^d}\left( \int_{\mathbb{R}^d}|\sigma(x,x;z)|^2dz\right)^{1/2}dx\le\;& C \int_{\mathbb{R}^d}\left( \int_{\mathbb{R}^d} \mathrm{Tr}_{\mathfrak{F}}\big(\Gamma(x,x)\big)\,\mathrm{Tr}_{\mathfrak{F}} \big( a^\dagger_z a_z \Gamma(x,x)\big)dz\right)^{1/2}dx\\
			\le\;& C \mathrm{Tr}_{\mathfrak{F}}\big(\mathbbm{1}\otimes(\mathcal{N}+1) \Gamma\big).
		\end{split}
	\end{equation}
	Here we used first the Cauchy-Scwharz inequality in the form  
	\begin{align*}\mathrm{Tr}_{\mathfrak{F}}\big[\Gamma(x,x) \left( a^\dagger_z + a_z \right)\big] &= \mathrm{Tr}_{\mathfrak{F}}\big[\Gamma(x,x)^{1/2} \Gamma(x,x)^{1/2} \left( a^\dagger_z + a_z \right)\big]\\
	 &\leq \mathrm{Tr}_{\mathfrak{F}}\big[\Gamma(x,x)\big] ^{1/2} \mathrm{Tr}_{\mathfrak{F}}\big[\Gamma(x,x)^{1/2}  \left( a^\dagger_z + a_z \right)^2 \Gamma(x,x)^{1/2}\big] ^{1/2}
	\end{align*}
recalling that $\left( a^\dagger_z + a_z \right)^2 \leq C a_z^\dagger a_z$. Next we used Cauchy-Schwarz for functions of $x$, together with the facts that 
\begin{align*}
 \int_{\R^d} \mathrm{Tr}_{\mathfrak{F}}\big[\Gamma(x,x) \big]dx &= \mathrm{Tr}_{\mathfrak{F}}\big[\Gamma \big]\\
 \int_{\R^d} \mathrm{Tr}_{\mathfrak{F}}\big[a_z^\dagger a_z \Gamma(x,x) \big]dz dx &= \mathrm{Tr}_{\mathfrak{F}}\big[\mathcal{N} \Gamma \big]   
\end{align*}
	
	\end{proof}

\subsection{Localization of states}	
	
	An important ingredient in the proof of our main result is the possibility to localize a generic state $\Gamma$ on $\mathfrak{H}$ to a certain region of $\mathbb{R}^d$ both for the particle's and for the quantized field's degrees of freedom. We here adapt to our coupled system the known construction for a single bosonic (or fermionic, for that matter) field~\cite{Ammari-04,DerGer-99,HaiLewSol-09a,HaiLewSol-09b,Lewin-11}.
	
	\begin{proposition}[\textbf{Construction of localized states}] \label{prop:def_localization}\mbox{} \\
		Let $0\le q \le \1$ be a linear operator on $L^2(\mathbb{R}^d)$, and $\Gamma$ be a positive trace-class operator on $\mathfrak{H}$. Let $\gamma,\Gamma^{(1,1)},\sigma$ be the reduced density matrices associated to $\Gamma$ according to Definition \ref{def:reduced_densities}.
		
		There exists a positive trace-class operator $\Gamma_{q}$ on $\mathfrak{H}$ whose reduced density matrices are
		\begin{align} \label{eq:localized_density_particle}
		\gamma_q=\;&q\,\gamma\,q\\
		\Gamma_q^{(1,1)}=\;&q\left[\mathrm{Tr}_{L^2_{\mathrm{part}}(\mathbb{R}^d)} \left( q \,\Gamma\, q\right) \right]^{(1,1)}q \label{eq:localized_density_field}
		\end{align}
		and, for $f\in L^2_\mathrm{field}(\mathbb{R}^d)$,
		\begin{align} \label{eq:localized_density_interaction}
		\sigma_{q}(f)=\;& q \,\sigma_{}(q f)\, q.
		\end{align}
		Moreover,
		\begin{equation} \label{eq:localized_traces}
			\mathrm{Tr}_{\mathfrak{H}} \Gamma = \mathrm{Tr}_{\mathfrak{H}} \Gamma_q+\mathrm{Tr}_{\mathfrak{H}}\Gamma_{(1-q^2)^{1/2}} .
		\end{equation}

	\end{proposition}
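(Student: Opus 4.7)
The plan is to construct $\Gamma_q$ as a partial trace over an auxiliary Fock-space factor, combining Lewin-type bosonic localization on the field side with a particle-side conjugation by $q$. Introduce the isometry
\[
j_q : L^2_\mathrm{field}(\mathbb{R}^d) \longrightarrow L^2_\mathrm{field}(\mathbb{R}^d) \oplus L^2_\mathrm{field}(\mathbb{R}^d),\qquad j_q f = \bigl(qf,\,\sqrt{1-q^2}\,f\bigr),
\]
satisfying $j_q^* j_q = \1$; second-quantize it and compose with the canonical isomorphism $\mathfrak{F}\bigl(L^2_\mathrm{field} \oplus L^2_\mathrm{field}\bigr) \cong \mathfrak{F} \otimes \mathfrak{F}$ to get an isometry $\Theta_q : \mathfrak{F} \to \mathfrak{F} \otimes \mathfrak{F}$ characterised on coherent states by $\Theta_q \xi(f) = \xi(qf) \otimes \xi\bigl(\sqrt{1-q^2}\,f\bigr)$. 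Then define
\[
\Gamma_q := \mathrm{Tr}_{\mathfrak{F}_2}\bigl[(q \otimes \Theta_q)\,\Gamma\,(q \otimes \Theta_q)^*\bigr],
\]
which is manifestly a positive trace-class operator on $\mathfrak{H}$; the companion $\Gamma_{(1-q^2)^{1/2}}$ is defined analogously, with $q$ replaced by $\sqrt{1-q^2}$ on the particle side and $\mathrm{Tr}_{\mathfrak{F}_1}$ in place of $\mathrm{Tr}_{\mathfrak{F}_2}$.

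The key technical input is the intertwining
\[
a(h)\,\Theta_q = \Theta_q\,a(j_q^* h),\qquad \Theta_q^*\,a^\dagger(h) = a^\dagger(j_q^* h)\,\Theta_q^*,\qquad h \in L^2_\mathrm{field} \oplus L^2_\mathrm{field},
\]
which I would verify first on the total set of coherent states (where both sides reduce to the same explicit scalar factors from the exponential action of $\xi$) and extend by closability. Combined with $\Theta_q^*\Theta_q = \1$ and specialised to $h = (f,0)$ or $h = (0,f)$ it yields the sandwich identities
\[
\Theta_q^*\bigl(a^\sharp(f) \otimes \1\bigr)\Theta_q = a^\sharp(qf),\qquad \Theta_q^*\bigl(a^\dagger(f)a(g) \otimes \1\bigr)\Theta_q = a^\dagger(qf)\,a(qg),
\]
with $a^\sharp\in\{a,a^\dagger\}$, together with the $\sqrt{1-q^2}$ analogues corresponding to $\1 \otimes a^\sharp(f)$.

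Given these, each formula in the proposition follows from a direct two-line computation. Testing $\gamma_q$ against a bounded $A$ on $L^2_\mathrm{part}$ gives $\mathrm{Tr}[A\gamma_q] = \mathrm{Tr}[(qAq \otimes \1)\Gamma] = \mathrm{Tr}[qAq\,\gamma]$, whence $\gamma_q = q\gamma q$. Testing $\Gamma_q^{(1,1)}$ against $a^\dagger(f)a(g)$ gives $\mathrm{Tr}\bigl[\Gamma_q\,a^\dagger(f)a(g)\bigr] = \mathrm{Tr}\bigl[(q^2 \otimes a^\dagger(qf)a(qg))\Gamma\bigr]$, and the partial-trace cyclicity $\mathrm{Tr}_{L^2_\mathrm{part}}[(q^2 \otimes \,\cdot\,)\Gamma] = \mathrm{Tr}_{L^2_\mathrm{part}}[q\Gamma q\,\cdot\,]$ rewrites this as $\langle qg,\,[\mathrm{Tr}_{L^2_\mathrm{part}}(q\Gamma q)]^{(1,1)}\,qf\rangle$, giving \eqref{eq:localized_density_field}. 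The same scheme with test operator $|v\rangle\langle u| \otimes (a^\dagger(f)+a(f))$ produces $\sigma_q(f) = q\sigma(qf)q$. Finally
\[
\mathrm{Tr}[\Gamma_q] + \mathrm{Tr}\bigl[\Gamma_{(1-q^2)^{1/2}}\bigr] = \mathrm{Tr}\bigl[\bigl(q^2 + (1-q^2)\bigr)\gamma\bigr] = \mathrm{Tr}[\Gamma].
\]

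The principal obstacle is proving the intertwining identity rigorously: $\Theta_q$ is a non-surjective isometry, so the conjugation formulas familiar from unitary Bogoliubov maps need care on the orthogonal complement of the range of $j_q$. The standard fix is to work throughout on the dense span of coherent states, which is a common core for all polynomials in $a$ and $a^\dagger$, check the identities there by explicit computation, and close using the arguments in the bosonic-localization literature cited just before the proposition.
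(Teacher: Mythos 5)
Your proposal is correct and follows essentially the same route as the paper: you use the same isometry $f \mapsto (qf, \sqrt{1-q^2}f)$, its second quantization composed with the exponential-law isomorphism $\mathfrak{F}(L^2\oplus L^2)\cong\mathfrak{F}\otimes\mathfrak{F}$, the same intertwining relations (the paper cites Dereziński--Gérard for them), and the same direct verifications of the reduced-density formulas and of the trace identity. The only presentational difference is that you define $\Gamma_q$ explicitly as the partial trace $\mathrm{Tr}_{\mathfrak{F}_2}\bigl[(q\otimes\Theta_q)\Gamma(q\otimes\Theta_q)^*\bigr]$, whereas the paper defines it dually by specifying $\mathrm{Tr}[(A\otimes B)\Gamma_q]=\mathrm{Tr}\bigl[\mathcal{Y}(Q)^*(qAq\otimes B\otimes\1)\mathcal{Y}(Q)\Gamma\bigr]$; these are the same operator, and your form makes positivity and the trace-class property manifest at a glance.
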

	
	\begin{proof}
		We follow and adapt the proof of \cite[Section~A.1.2]{HaiLewSol-09b}. Define the partial isometry
		\begin{equation}
		Q: L^2_\mathrm{field}(\mathbb{R}^d) \in f \mapsto q f \oplus(1-q^2)^{1/2} f \in L^2_\mathrm{field}(\mathbb{R}^d) \oplus L^2_\mathrm{field}(\mathbb{R}^d).
		\end{equation}
		and its' lifting to Fock space \footnote{The reader should think of the more familiar notation $\Gamma(Q)$ whose differential $\mathrm{d}\Gamma (Q)$ is the second quantization of $Q$. In our setting such a notation would clash with the way we are denoting states $\Gamma$ on $\mathfrak{H}$.}
		\begin{equation}
			\begin{split}
				&G(Q):\mathfrak{F}(L^2_\mathrm{field}(\mathbb{R}^d)) \to \mathfrak{F}\big(L^2_\mathrm{field}(\mathbb{R}^d)\oplus L^2_\mathrm{field}(\mathbb{R}^d)\big)\\
				&(G(Q)\Psi)^{(n)}=Q^{\otimes n} \,\Psi^{(n)}.
			\end{split}
		\end{equation}
		The latter operator satisfies
		\begin{equation}
			G(Q) \,a^\dagger(f)=a^\dagger\left( qf\oplus (1-q^2)^{1/2}f\right) G(Q).
		\end{equation}
		Recall now that there exists a canonical isomorphism
		\begin{equation}
			U:\mathfrak{F} \big(L^2_\mathrm{field}(\mathbb{R}^d)\oplus L^2_\mathrm{field}(\mathbb{R}^d)\big) \to \mathfrak{F}\big(L^2_\mathrm{field}(\mathbb{R}^d)\big) \otimes \mathfrak{F}\big(L^2_\mathrm{field}(\mathbb{R}^d)\big),
		\end{equation}
		and define the creation and annihilator operators on $
                \mathfrak{F}\big(L^2_\mathrm{field}(\mathbb{R}^d)\big) \otimes
                \mathfrak{F}\big(L^2_\mathrm{field}(\mathbb{R}^d)\big)$ as
		\begin{equation}
			\begin{split}
				&c^\dagger(f)= a^\dagger(f)\otimes \1_{\mathfrak{F}}\qquad c(f)=a(f)\otimes \1_{\mathfrak{F}}\\
				&d^\dagger(f)= \1_{\mathfrak{F}}\otimes a^\dagger(f)\qquad d(f)=\1_{\mathfrak{F}}\otimes a(f).
			\end{split}
		\end{equation}
		We denote with the same symbols the extensions of these operators to $\mathfrak{H}=L^2_\mathrm{part}(\mathbb{R}^d)\otimes \mathfrak{F}$ which act as the identity on $L^2_\mathrm{part}(\mathbb{R}^d)$.
		The relation between $U$, the latter creation and annihilation operators, and those on $\mathfrak{F}\big(L^2_\mathrm{field}(\mathbb{R}^d)\bigoplus L^2_\mathrm{field}(\mathbb{R}^d)\big)$ is
		\begin{equation}
			\begin{split}
				Ua^\dagger(f\oplus g)=\;&\big(c^\dagger(f)+d^\dagger(g)\big)U\\
				Ua(f\oplus g)=\;&\big(c(f)+d(g)\big)U.
			\end{split}
		\end{equation}
		Finally, define the operator
		\begin{equation}
			\mathcal{Y}(Q)= \1_{L^2_\mathrm{part}(\mathbb{R}^d)}\otimes \big(UG(Q)\big):\mathfrak{H} \to L^2_\mathrm{part}(\mathbb{R}^d) \otimes \mathfrak{F}\big(L^2_\mathrm{field}(\mathbb{R}^d)\big)  \otimes \mathfrak{F}\big(L^2_\mathrm{field}(\mathbb{R}^d)\big).
		\end{equation}
		By the relations above, $\mathcal{Y}(Q)$ satisfies the intertwining properties (see \cite[Lemma 2.14 and 2.15]{DerGer-99})
		\begin{equation} \label{eq:intertwining}
			\begin{split}
			\mathcal{Y}(Q)\,a^\dagger(f)=\;&\Big( c^\dagger(qf)+d^\dagger\big((1-q^2)^{1/2}f\big) \Big)	\mathcal{Y}(Q)\\
			\mathcal{Y}(Q)\,a(f)=\;&\Big( c(qf)+d\big((1-q^2)^{1/2}f\big) \Big)	\mathcal{Y}(Q)\\
			\mathcal{Y}(Q)a(qf)=c(f)\mathcal{Y}&(Q)\qquad \mathcal{Y}(Q)a\big((1-q^2)^{1/2}f\big)=d(f)\mathcal{Y}(Q)\\
			a^\dagger(qf)\mathcal{Y}(Q)^*= \mathcal{Y}(Q)^*c^\dagger&(f)\qquad a^\dagger\big((1-q^2)^{1/2}f\big)\mathcal{Y}(Q)^*= \mathcal{Y}(Q)^*d^\dagger(f)
			\end{split}
		\end{equation}
		Moreover, since $Q^*Q=\1_{L^2_\mathrm{field}(\mathbb{R}^d)}$, it follows that $\mathcal{Y}(Q)^*\mathcal{Y}(Q)=\1_{\mathfrak{H}}$.
%
%
%
%
		
		Let now $\Gamma$ be a positive trace class operator on $\mathfrak{H}$. We define the $q$-localization of $\Gamma$ as the trace-class operator $\Gamma_q$ on $\mathfrak{H}$ whose action on a factorized bounded operator $A\otimes B \in \mathcal{B}(L^2_\mathrm{part}(\mathbb{R}^d))\otimes \mathcal{B}(\mathfrak{F})$ is
		\begin{equation}
			\mathrm{Tr}_{\mathfrak{H}}\left[ (A\otimes B) \Gamma_q \right]=\mathrm{Tr}_{\mathfrak{H}}\Big[\mathcal{Y}(Q)^* (qAq \otimes B \otimes \1_{\mathfrak{F}}) \mathcal{Y}(Q)\Gamma\Big],
		\end{equation}
		and the extension to non-factorized operators follows by linearity.
		The fact that $\Gamma_q$ is positive follows from the positivity of $\Gamma$. Moreover, the identity $\mathcal{Y}(Q)^* \mathcal{Y}(Q)=\1_{\mathfrak{H}}$ implies that
		\begin{equation}
			\mathrm{Tr}_{\mathfrak{H}} \Gamma_q=\mathrm{Tr}_{\mathfrak{H}} \left[\big(q^2\otimes \1_{\mathfrak{F}}\big)\Gamma\right].
		\end{equation}
		Repeating the construction by switching the roles of $q$ and $(1-q^2)^{1/2}$ we similarly find
		\begin{equation}
			\mathrm{Tr}_{\mathfrak{H}} \Gamma_{(1-q^2)^{1/2}}= \mathrm{Tr}_{\mathfrak{H}}\left[ \big((1-q^2)\otimes \1_{\mathfrak{F}}\big)\Gamma\right].
		\end{equation}
		The last two identities prove \eqref{eq:localized_traces}.
		
%
		
		In order to show \eqref{eq:localized_density_particle} we compute
		\begin{equation}
		\big\langle g,\gamma_q f \big\rangle =\mathrm{Tr}_{\mathfrak{H}}\big( \ket{f}\bra{g} \otimes \1_{\mathfrak{F}} \Gamma_q\big )=\mathrm{Tr}_{\mathfrak{H}} \big(  \ket{qf} \bra{qg} \otimes \1_{\mathfrak{F}}\Gamma \big)= \big\langle f,q\, \gamma\, q\, g \big\rangle.
		\end{equation}
		This is precisely \eqref{eq:localized_density_particle}. In order to show \eqref{eq:localized_density_field}, in turn, we recall \eqref{eq:def_rho_field} and \eqref{eq:reduced_density_fock} to write
		\begin{equation}
			\big\langle g, \Gamma^{(1,1)}_qf\big\rangle =  \Big\langle g,  \left[\mathrm{Tr}_{L^2_\mathrm{part}(\mathbb{R}^d)}(\Gamma_q)\right]^{(1,1)}f \Big\rangle = \mathrm{Tr}_{\mathfrak{H}} \left[\left(\1_{L^2_\mathrm{part}(\mathbb{R}^d)}\otimes a^\dagger(f) a(g)\right) \Gamma_q \right].
		\end{equation}
		Eq. \eqref{eq:localized_density_field} is then deduced using the definition of $\Gamma_q$ and the intertwining properties \eqref{eq:intertwining}. Finally, in order to show \eqref{eq:localized_density_interaction} we write, for a generic $O\in \mathcal{B}\big(L^2_\mathrm{part}(\mathbb{R}^d)\big)$,
		\begin{equation*}
			\mathrm{Tr}_{L^2_\mathrm{part}(\mathbb{R}^d)}\big[ O\, \sigma_{q}(f) \big]=\mathrm{Tr}_{\mathfrak{H}}\left[\big(O \otimes a(f)\big)\Gamma_q\right]
		\end{equation*}
		Again, the definition of $\Gamma_q$ and the intertwining properties allow to conclude.
	\end{proof}

	\subsection{Energy localization}\label{sec:loc}

	We fix a smooth partition of unity $\chi^2+\eta^2=1$ with $\chi(x)=1$ if $|x|\le 1$ and $\chi(x)=0$ if $|x|\ge2$, and define $\chi_R(x)=\chi(x/R)$ and $\eta_R(x)=\eta(x/R)$. We further assume that $\chi$, and thus $\eta$, are monotone functions. We then have

	\begin{proposition}[\textbf{Energy localization}] \mbox{}\label{prop:localization} \\ 
		Let $V,v$ be as in Assumptions \ref{assum:V} and \ref{assum:int}, and let $H_\alpha^{(V)}$ be defined in \eqref{eq:H_alpha^V}. Consider a family $(\Psi_\alpha)_{\alpha}$ of normalized vectors $\Psi_{\alpha}\in \mathfrak{H}_{\alpha}$ and the associated states $\Gamma_\alpha=\ket{\Psi_\alpha}\bra{\Psi_\alpha}$. Assume that 
		\begin{equation*}\mathrm{Tr}\big[H_\alpha^{(V)}\, \Gamma_\alpha\big]\le C\end{equation*}
		uniformly as $\alpha\to\infty$. Let $\gamma_\alpha,\Gamma_\alpha^{(1,1)},\sigma_\alpha$ be the reduced density matrices associated to $\Gamma_\alpha$ according to Definition \ref{def:reduced_densities}. Let $\Gamma_{\alpha,\chi_R},\Gamma_{\alpha,\eta_R}$ be the localized states corresponding to the choices $\Gamma=\Gamma_\alpha$ and $q=\chi_R,\eta_R$ in Proposition \ref{prop:def_localization}. Then
		\begin{equation} \label{eq:full_energy_localization}
			\begin{split}
				\liminf_{\alpha\to\infty}&\, \mathrm{Tr}_{\mathfrak{H}}\left(H_\alpha^{(V)}\, \Gamma_\alpha\right) \\
				\ge\;& \liminf_{R\to\infty} \liminf_{\alpha\to\infty} \Big[ \mathrm{Tr}_{\mathfrak{H}}\left(H_\alpha^{(V)}\, \Gamma_{\alpha,\chi_R}\right) + \mathrm{Tr}_{\mathfrak{H}}\left(H_\alpha^{(0)}\, \Gamma_{\alpha,\eta_R}\right)\Big].
			\end{split}
		\end{equation}
	\end{proposition}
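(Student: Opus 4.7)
The strategy is to split $H_\alpha^{(V)}$ into its four additive pieces---particle kinetic, external potential $V$, field number $\mathcal{N}_\alpha$, and particle-field interaction $I=\int(v(x-y)a^\dagger_y+\mathrm{h.c.})\,dy$---and localize each separately using the kernel identities \eqref{eq:localized_density_particle}--\eqref{eq:localized_density_interaction} of Proposition~\ref{prop:def_localization}. The aim is a pointwise inequality
\[
\mathrm{Tr}(H_\alpha^{(V)}\Gamma_\alpha) \geq \mathrm{Tr}(H_\alpha^{(V)}\Gamma_{\alpha,\chi_R}) + \mathrm{Tr}(H_\alpha^{(0)}\Gamma_{\alpha,\eta_R}) - \delta(\alpha,R),
\]
with $\lim_{R\to\infty}\limsup_{\alpha\to\infty}\delta(\alpha,R)=0$, from which \eqref{eq:full_energy_localization} follows upon taking $\liminf_\alpha$ and then $\liminf_R$.

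The one-particle terms are handled classically. The IMS formula $-\Delta = \chi_R(-\Delta)\chi_R + \eta_R(-\Delta)\eta_R - |\nabla\chi_R|^2 - |\nabla\eta_R|^2$ yields the kinetic split modulo an error $\mathrm{Tr}(\gamma_\alpha(|\nabla\chi_R|^2+|\nabla\eta_R|^2))=O(R^{-2})$ uniform in $\alpha$. Since $V$ is multiplication and $\chi_R^2+\eta_R^2=1$, one has the exact split $\mathrm{Tr}(V\gamma_\alpha)=\mathrm{Tr}(V\gamma_{\alpha,\chi_R})+\mathrm{Tr}(V\gamma_{\alpha,\eta_R})$; replacing $V$ by $0$ in the outer Hamiltonian costs $|\mathrm{Tr}(V\gamma_{\alpha,\eta_R})|\leq\sup_{|x|\geq R}|V(x)|$, which vanishes in $R$ by Assumption~\ref{assum:V}. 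For the field number, a direct kernel computation using \eqref{eq:localized_density_field} gives
\[
\mathrm{Tr}(\mathcal{N}_\alpha\Gamma_\alpha) - \mathrm{Tr}(\mathcal{N}_\alpha\Gamma_{\alpha,\chi_R}) - \mathrm{Tr}(\mathcal{N}_\alpha\Gamma_{\alpha,\eta_R}) = \iint\bigl[\chi_R^2(x)\eta_R^2(z)+\eta_R^2(x)\chi_R^2(z)\bigr]\,\mathrm{Tr}_{\mathfrak{F}}\bigl(\Gamma_\alpha(x,x)\,a_z^\dagger a_z\bigr)\,dx\,dz,
\]
which is non-negative by positivity of $\Gamma_\alpha(x,x)$ and $a_z^\dagger a_z$, so this contribution goes the right way without loss.

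The main step is the interaction. Reading $\sigma_q(x,y;z)=q(x)q(y)q(z)\sigma(x,y;z)$ from \eqref{eq:localized_density_interaction}, the discrepancy is
\[
E(\alpha,R) = \iint v(x-z)\bigl[\chi_R^2(x)(1-\chi_R(z))+\eta_R^2(x)(1-\eta_R(z))\bigr]\sigma_\alpha(x,x;z)\,dx\,dz,
\]
supported where $(x,z)$ straddle the transition annulus $R\leq|\cdot|\leq 2R$. I would split $v=v\mathbf{1}_{|\cdot|\leq M}+v\mathbf{1}_{|\cdot|>M}$: Cauchy--Schwarz in $z$, Lemma~\ref{lem:redmat}, and the uniform bound $\mathrm{Tr}((\mathcal{N}_\alpha+1)\Gamma_\alpha)\leq C$ (which follows from $\mathrm{Tr}(H_\alpha^{(V)}\Gamma_\alpha)\leq C$ via a standard operator bound on $I$) render the far-$v$ contribution $O(\|v\mathbf{1}_{|\cdot|>M}\|_{L^2})$, uniformly small in $R,\alpha$. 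For the near piece, $|x-z|\leq M$ combined with the bracket's support forces $|x|\geq R-M$, and the pointwise bound $\|\sigma_\alpha(x,x;\cdot)\|_{L^2_z}\lesssim \rho_\alpha(x)^{1/2}\,\mathrm{Tr}_{\mathfrak{F}}(\mathcal{N}_\alpha\Gamma_\alpha(x,x))^{1/2}$ underlying \eqref{eq:summability_sigma} yields a bound of the form $C\|v\|_{L^2}\bigl(\int_{|x|\geq R-M}\rho_\alpha\bigr)^{1/2}$.

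The main obstacle is controlling this tail integral uniformly in $\alpha$ as $R\to\infty$, since $\rho_\alpha$ may not be a priori tight. Here I combine Hoffmann--Ostenhof with the uniform kinetic bound to place $\sqrt{\rho_\alpha}$ in a bounded subset of $H^1(\R^d)$, extract a subsequence along which $\rho_\alpha\to\rho_\infty$ in $L^1_{\mathrm{loc}}$, and observe that any escape mass $1-\int\rho_\infty$ is carried by the outer localized state $\Gamma_{\alpha,\eta_R}$, whose free-field energy $\mathrm{Tr}(H_\alpha^{(0)}\Gamma_{\alpha,\eta_R})$ sits on the right-hand side of \eqref{eq:full_energy_localization}; the $\liminf_R\liminf_\alpha$ structure absorbs the residual, completing the proof.
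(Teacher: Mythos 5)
Your treatment of the particle, external potential, and field-number terms is essentially identical to the paper's (IMS formula, $\chi_R^2+\eta_R^2=1$, decay of $V$, positivity of the cross terms), and is correct. Your interaction decomposition $v = v\mathbf{1}_{|\cdot|\le M}+v\mathbf{1}_{|\cdot|>M}$ is a genuinely different, and slightly more elementary, split than the paper's (which splits the $z$-localizer $\eta_R^2$ into $\eta_{4R}^2$ and $\eta_R^2-\eta_{4R}^2$). Both isolate a far-field piece controlled uniformly by the $L^2$ tail of $v$ and a near-field piece supported on a dilating annulus in $x$.

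The gap is in your handling of the near-field annulus term. After reducing to a bound of the form $C\|v\|_{L^2}\bigl(\int_{R-M\le|x|\le 2R+M}\rho_\alpha\bigr)^{1/2}$, you must show this vanishes in the order $\lim_R\lim_\alpha$, and your argument for this does not close. Passing to a subsequence along which $\rho_\alpha\to\rho_\infty$ in $L^1_{\mathrm{loc}}$ gives no uniform-in-$\alpha$ control of tails: for any fixed scheme $\alpha\mapsto R(\alpha)$ the annulus can in principle track the escaping mass, so the integral need not be small. The final assertion that the escape mass ``is carried by the outer localized state, whose free-field energy sits on the right-hand side'' is not a valid way to absorb an error: the right-hand side of \eqref{eq:full_energy_localization} is the quantity you are trying to establish as a lower bound, so its terms cannot be used to cancel an uncontrolled defect in the inequality --- that would be circular. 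What is actually needed, and what the paper supplies, is a Lions-type monotonicity argument: the paper sets $Q_\alpha(R)=\iint|v(x-z)|\mathbf{1}_{|z|\ge R}|\sigma_\alpha(x,x;z)|\,dx\,dz$, notes this is non-increasing in $R$ and uniformly bounded, and applies Helly's selection principle to conclude $\lim_R\limsup_\alpha\bigl(Q_\alpha(R)-Q_\alpha(8R)\bigr)=0$, with no tightness assumption whatsoever. Your decomposition could be rescued the same way, taking $P_\alpha(R)=\int_{|x|\ge R}\rho_\alpha$ and running the identical Helly argument on $P_\alpha(R-M)-P_\alpha(2R+M)$; but as written, the tail control is missing.
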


We split the proof into three lemmas, corresponding to the three terms in the energy.
%

\begin{lemma}[\textbf{Particle energy localization}] \mbox{}\label{lemma:part_localization} \\
	In the same assumptions of Proposition \ref{prop:localization} we have
	\begin{equation} \label{eq:splitting_particle}
		\begin{split}
			\liminf_{\alpha\to\infty} \mathrm{Tr}_{\mathfrak{H}}\Big((-\Delta+V)\otimes \mathbbm{1}\,\Gamma_\alpha\Big)\ge\;& \liminf_{R \to\infty} \liminf_{\alpha\to\infty}\Big[ \mathrm{Tr}_{L^2_\mathrm{part}(\mathbb{R}^d)}\big(\chi_R (-\Delta+V )\chi_R\,\gamma_\alpha\big)\\
			&\qquad\qquad\qquad\quad+ \mathrm{Tr}_{L^2_\mathrm{part}(\mathbb{R}^d)}\big(-\eta_R\Delta\eta_R \gamma_\alpha\big) \Big].
		\end{split}
	\end{equation}
\end{lemma}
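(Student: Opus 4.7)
My plan is to derive an operator identity expressing $-\Delta + V$ as $\chi_R(-\Delta+V)\chi_R - \eta_R\Delta\eta_R$ plus remainders that vanish uniformly in $\alpha$ as $R\to\infty$, then test it against $\gamma_\alpha$ and pass successively to $\liminf_{\alpha\to\infty}$ and then $\liminf_{R\to\infty}$. The two ingredients are an IMS-type localization of the Laplacian and the trivial multiplicative splitting $V = \chi_R^2 V + \eta_R^2 V$, which is legitimate because $V$, $\chi_R$ and $\eta_R$ all commute as multiplication operators.

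\textbf{Operator identity.} Differentiating $\chi_R^2+\eta_R^2=1$ twice gives $\chi_R\Delta\chi_R + \eta_R\Delta\eta_R = -(|\nabla\chi_R|^2+|\nabla\eta_R|^2)$, hence the IMS formula
\begin{equation*}
-\Delta = \chi_R(-\Delta)\chi_R + \eta_R(-\Delta)\eta_R - |\nabla\chi_R|^2 - |\nabla\eta_R|^2
\end{equation*}
on smooth functions (and by density on $H^2(\R^d)$). Adding the splitting of $V$ I obtain
\begin{equation*}
-\Delta + V = \chi_R(-\Delta+V)\chi_R - \eta_R\Delta\eta_R + \eta_R V\eta_R - |\nabla\chi_R|^2 - |\nabla\eta_R|^2.
\end{equation*}

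\textbf{Error control and limits.} Testing this identity against $\gamma_\alpha$ produces the two terms on the right-hand side of \eqref{eq:splitting_particle}, together with a remainder
\begin{equation*}
E(R,\alpha) := \Tr_{L^2_\mathrm{part}(\R^d)}\!\big[\eta_R V\eta_R\,\gamma_\alpha\big] - \Tr_{L^2_\mathrm{part}(\R^d)}\!\big[(|\nabla\chi_R|^2+|\nabla\eta_R|^2)\,\gamma_\alpha\big].
\end{equation*}
From the scaling $\chi_R(x)=\chi(x/R)$ the gradient-squared terms satisfy $\||\nabla\chi_R|^2+|\nabla\eta_R|^2\|_\infty \le CR^{-2}$; since $\mathrm{supp}\,\eta_R\subset \{|x|\ge R\}$ and $V\to 0$ at infinity by Assumption \ref{assum:V}, $\|\eta_R V\eta_R\|_\infty \le \sup_{|x|\ge R}|V(x)| \to 0$. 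Combined with $\Tr\gamma_\alpha=1$, this gives $|E(R,\alpha)|\le \eps(R)\to 0$ uniformly in $\alpha$. Taking $\liminf_{\alpha\to\infty}$ of
\begin{equation*}
\Tr_{\gH}\!\big[(-\Delta+V)\otimes\1\,\Gamma_\alpha\big] = \Tr_{L^2_\mathrm{part}(\R^d)}\!\big[\chi_R(-\Delta+V)\chi_R\,\gamma_\alpha\big] + \Tr_{L^2_\mathrm{part}(\R^d)}\!\big[-\eta_R\Delta\eta_R\,\gamma_\alpha\big] + E(R,\alpha)
\end{equation*}
and then $\liminf_{R\to\infty}$, using the uniform vanishing of $E(R,\alpha)$, yields \eqref{eq:splitting_particle}.

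\textbf{Main obstacle.} There is no serious obstacle: the argument is a standard IMS computation. The only point to be watchful of is the asymmetry in \eqref{eq:splitting_particle}, namely that $V$ is retained inside the inner localization $\chi_R(-\Delta+V)\chi_R$ but dropped on the outer region, so the leftover piece $\eta_R V \eta_R$ must be absorbed into the vanishing error. This is exactly where the decay $V(x)\to 0$ at infinity is used; without it a symmetric splitting of $V$ would be forced and the particle energy could not be divided cleanly into an ``interior'' and a ``free'' piece, the latter of which will be crucial in Section~\ref{sec:limit} for comparison with $E_\mathrm{Pek}^{(0)}$.
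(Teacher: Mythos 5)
Your proof is correct and follows the same route as the paper: the IMS formula for $-\Delta$ (giving a gradient remainder bounded by $CR^{-2}$), the multiplicative splitting $V=\chi_R^2 V+\eta_R^2 V$, and the uniform vanishing of $\eta_R^2 V$ via the decay of $V$ at infinity together with $\Tr\gamma_\alpha=1$. Bundling everything into a single operator identity with a remainder $E(R,\alpha)\to 0$ uniformly in $\alpha$ is only a slightly tidier presentation of what the paper does term by term.
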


\begin{proof}
	Let us first focus on proving a lower bound on the term involving $-\Delta$. Using the IMS formula
	\begin{equation*}
		-\Delta=-\chi_R \Delta \chi_R-\eta_R \Delta \eta_R-|\nabla \chi_R|^2 - |\nabla \eta_R|^2.
	\end{equation*}
	The two gradient terms are bounded functions which, by the definition of $\chi_R$ and $\eta_R$, satisfy
	\begin{equation*}
		|\nabla\chi_R|^2+|\nabla \eta_R|^2 \le \frac{C}{R^2}.
	\end{equation*}
	This immediately implies, using also the definition of $\gamma_\alpha$,
	\begin{equation*}
		\begin{split}
			\mathrm{Tr}_{\mathfrak{H}}\big( -\Delta\otimes\mathbbm{1}\,\Gamma_\alpha \big)=\;&\mathrm{Tr}_{L^2_\mathrm{part}(\mathbb{R}^d)}\big( -\Delta\, \gamma_\alpha\big)\\
			\ge\;&\mathrm{Tr}_{L^2_\mathrm{part}(\mathbb{R}^d)}\big( -\chi_R\Delta\chi_R \,\gamma_\alpha\big)
			+ \mathrm{Tr}_{L^2_\mathrm{part}(\mathbb{R}^d)}\big( -\eta_R\Delta \eta_R \,\gamma_\alpha\big )-\frac{C}{R^2}.
		\end{split}
	\end{equation*}
	Passing to the $\liminf$ for $\alpha\to\infty$ followed by $R\to\infty$, we conclude
	\begin{equation*}
		\begin{split}
			\liminf_{\alpha\to\infty} \mathrm{Tr}_{\mathfrak{H}}\big( -\Delta\otimes\mathbbm{1}\,\Gamma_\alpha \big)\ge\;& \liminf_{R \to\infty} \liminf_{\alpha\to\infty}\Big[ \mathrm{Tr}_{L^2_\mathrm{part}(\mathbb{R}^d)}\big(-\chi_R \Delta\chi_R\,\gamma_\alpha\big)+ \mathrm{Tr}_{L^2_\mathrm{part}(\mathbb{R}^d)}\big(-\eta_R\Delta\eta_R\gamma_\alpha\big) \Big].
		\end{split}
	\end{equation*}
	For the $V$-term in \eqref{eq:splitting_particle} we proceed in a similar way, by writing
	\begin{equation*}
		V=\chi_R^2 V + \eta_R^2 V.
	\end{equation*}
	Since $V$ is a bounded function that decays at infinity, we have, as $R\to\infty$, $V \eta_R^2 \ge - \,o_R(1)$. Passing to the two $\liminf$'s concludes the proof.
\end{proof}

We next localize the field energy. We could generalize this to more general field dispersion relations using appropriate IMS formulas, cf the discussion following~\eqref{eq:H_alpha^V}.

\begin{lemma}[\textbf{Field energy localization}] \mbox{}\label{lemma:field_localization} \\
	In the same assumptions of Proposition \ref{prop:localization} we have
	\begin{equation} \label{eq:splitting_field}
		\begin{split}
			\liminf_{\alpha\to\infty}& \mathrm{Tr}_{\mathfrak{H}}\big( \mathbbm{1}\otimes \mathcal{N}\,\Gamma_\alpha\big)\\
			\ge\;& \liminf_{R \to\infty} \liminf_{\alpha\to\infty}\bigg\{\mathrm{Tr}_{L^2_\mathrm{field}(\mathbb{R}^d)}\bigg[ \chi_R\bigg(\int_{\mathbb{R}^d} \chi_R^2(x) \Gamma_\alpha(x,x) dx\bigg)^{(1,1)}\chi_R\bigg]\\
			&\qquad\qquad\qquad+\mathrm{Tr}_{L^2_\mathrm{field}(\mathbb{R}^d)}\bigg[ \eta_R\bigg(\int_{\mathbb{R}^d} \eta_R^2(x) \Gamma_\alpha(x,x) dx\bigg)^{(1,1)}\eta_R\bigg]\,\bigg\}.
		\end{split}
	\end{equation}
\end{lemma}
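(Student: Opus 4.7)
The plan is to recast both sides of~\eqref{eq:splitting_field} in the language of second quantization and reduce the desired inequality to discarding two manifestly non-negative cross terms. Because $T\equiv 1$, no IMS-type commutator correction is needed here (in contrast with Lemma~\ref{lemma:part_localization}); the whole argument rests on the partition of unity $\chi_R^2+\eta_R^2=\mathbbm{1}$ together with linearity of $\mathrm{d}\Gamma$.

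Concretely, I would first invoke the standard identity $\mathrm{Tr}_{L^2_\mathrm{field}}\!\left[A\,G^{(1,1)}\right]=\mathrm{Tr}_\mathfrak{F}\!\left[\mathrm{d}\Gamma(A)\,G\right]$, valid for any bounded self-adjoint $A$ on $L^2_\mathrm{field}(\mathbb{R}^d)$ and any Fock-space state $G$ with finite number expectation, together with~\eqref{eq:localized_density_field}, cyclicity of the trace, and the identity $\mathrm{Tr}_{L^2_\mathrm{part}}(q\Gamma q)=\int q^2(x)\Gamma(x,x)\,dx$ for a multiplication operator $q$, to rewrite each summand on the right-hand side of~\eqref{eq:splitting_field} as
\begin{equation*}
\mathrm{Tr}_{L^2_\mathrm{field}}\!\left[q\Bigl(\!\int_{\mathbb{R}^d}\! q^2(x)\Gamma_\alpha(x,x)\,dx\Bigr)^{\!(1,1)}\!q\right] = \mathrm{Tr}_\mathfrak{H}\!\left[\bigl(q^2\otimes\mathrm{d}\Gamma(q^2)\bigr)\,\Gamma_\alpha\right],\quad q\in\{\chi_R,\eta_R\}.
\end{equation*}
The left-hand side is $\mathrm{Tr}_\mathfrak{H}\!\left[(\mathbbm{1}\otimes\mathrm{d}\Gamma(\mathbbm{1}))\Gamma_\alpha\right]$ since $\mathcal{N}_\alpha=\mathrm{d}\Gamma(\mathbbm{1})$. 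Using $\mathbbm{1}=\chi_R^2+\eta_R^2$ on the particle factor and $\mathrm{d}\Gamma(\mathbbm{1})=\mathrm{d}\Gamma(\chi_R^2)+\mathrm{d}\Gamma(\eta_R^2)$ on the field factor expands the left-hand side into four tensor-product terms; two of them reproduce the right-hand side of~\eqref{eq:splitting_field}, and the remaining two are the cross terms
\begin{equation*}
\mathrm{Tr}_\mathfrak{H}\!\left[(\chi_R^2\otimes\mathrm{d}\Gamma(\eta_R^2))\Gamma_\alpha\right]\quad\text{and}\quad \mathrm{Tr}_\mathfrak{H}\!\left[(\eta_R^2\otimes\mathrm{d}\Gamma(\chi_R^2))\Gamma_\alpha\right].
\end{equation*}
Each is non-negative: $\chi_R^2,\eta_R^2\ge 0$ as multiplication operators, $\mathrm{d}\Gamma(\chi_R^2),\mathrm{d}\Gamma(\eta_R^2)\ge 0$ as second quantizations of non-negative operators, so the tensor products are positive self-adjoint and pair with the positive state $\Gamma_\alpha$ to give non-negative traces. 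Dropping them yields~\eqref{eq:splitting_field} \emph{pointwise} in $R$ and $\alpha$, rendering the two $\liminf$'s automatic.

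I expect no serious obstacle: the argument is an algebraic splitting followed by positivity. The only points worth checking carefully are the finiteness of $\mathrm{Tr}[(\mathbbm{1}\otimes\mathcal{N}_\alpha)\Gamma_\alpha]$ — which follows from the assumption $\mathrm{Tr}[H_\alpha^{(V)}\Gamma_\alpha]\le C$ together with the fact that, for $v\in L^2$, the interaction term in~\eqref{eq:H_alpha^V} is controlled by $\mathcal{N}_\alpha+\mathbbm{1}$ exactly as in the bound used in Lemma~\ref{lem:redmat} — and the self-adjointness and positivity of $\mathrm{d}\Gamma(q^2)$ for bounded multiplications $q\in\{\chi_R,\eta_R\}$, which is standard. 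For a more general dispersion relation $T$ as in~\eqref{eq:H_alpha^V}, one would replace $\mathcal{N}_\alpha$ by $\mathrm{d}\Gamma(T)$ and would need an IMS-type identity for $\mathrm{d}\Gamma(T)$ to absorb the non-commutation of $T$ with the multiplication operators $\chi_R,\eta_R$ — precisely the extra ingredient alluded to in the remark following~\eqref{eq:H_alpha^V}.
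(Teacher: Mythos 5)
Your proof is correct and takes essentially the same route as the paper's: both split the identity as $\chi_R^2+\eta_R^2$ twice (once on the particle factor, once on the field factor), giving four non-negative terms, two of which are the desired matching contributions and two of which are cross terms discarded by positivity. The only difference is presentational — the paper works directly with the $(1,1)$-reduced density via $\Gamma_\alpha(x,x)\ge 0$, whereas you phrase the same decomposition through $\mathrm{d}\Gamma$ and the tensor-product structure — and both correctly identify that no IMS correction is needed because $T\equiv 1$ commutes with multiplication operators.
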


\begin{proof}
	Recalling the identification~\eqref{eq:Gamma_x_y} together with~\eqref{eq:def_rho_field} we have
	\begin{align*}
		\mathrm{Tr}_{\mathfrak{H}}\Big( \mathbbm{1}\otimes \mathcal{N}\,\Gamma_\alpha\Big)&= \int \mathrm{Tr}_{\mathfrak{F}} \left( \mathcal{N} \Gamma_\alpha (x,x)\right) dx\\
		&= \mathrm{Tr}_{L^2_\mathrm{field}(\mathbb{R}^d)}\left(\left[ \int_{\mathbb{R}^d} \Gamma_\alpha(x,x)dx\right]^{(1,1)}\right)
	\end{align*}
	by definition of the first reduced density matrix of a state on Fock space. Using the fact that 
	$$\chi_R^2+\eta_R^2=1$$
	twice and discarding the two mixed terms for a lower bound (recall that $\Gamma(x,x)\ge0$) leads to
	\begin{equation*}
		\begin{split}
			\mathrm{Tr}_{\mathfrak{H}}\big( \mathbbm{1}\otimes \mathcal{N}\,\Gamma_\alpha\Big) \ge \;&\mathrm{Tr}_{L^2_\mathrm{field}(\mathbb{R}^d)}\left(\chi_R \left[\int_{\mathbb{R}^d}\chi_R^2(x) \Gamma_\alpha(x,x)dx\right]^{(1,1)}\chi_R\right)\\
			&+\mathrm{Tr}_{L^2_\mathrm{field}(\mathbb{R}^d)}\left(\eta_R \left[\int_{\mathbb{R}^d}\eta_R^2(x) \Gamma_\alpha(x,x)dx\right]^{(1,1)}\eta_R\right).
		\end{split}
	\end{equation*}
\end{proof}

Finally we deal with the particle-field interaction:

\begin{lemma}[\textbf{Interaction energy localization}] \mbox{}\label{lemma:interaction_localization} \\
	Under the same assumptions as in Proposition \ref{prop:localization} we have
	\begin{equation} \label{eq:splitting_interaction}
		\begin{split}
			\liminf_{\alpha\to\infty}&\; \mathrm{Tr}_{\mathfrak{H}}\bigg( \int_{\mathbb{R}^d}v(\cdot-z) \left( a^\dagger_z + a_z\right) \,\Gamma_\alpha\bigg)\\
			\ge\;& \liminf_{R \to\infty} \liminf_{\alpha\to\infty}\bigg[\iint_{\mathbb{R}^{2d}} \chi_R^2(x)\chi_R(z) v(x-z) \sigma_\alpha(x,x;z)\, dxdz\\
			&\qquad\qquad\qquad\quad +\iint_{\mathbb{R}^{2d}} \eta_R^2(x)\eta_R(z) v(x-z) \sigma_\alpha(x,x;z)\, dxdz \bigg]
		\end{split}
	\end{equation}
\end{lemma}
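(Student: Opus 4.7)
I would begin by rewriting the interaction energy as $\iint v(x-z)\,\sigma_\alpha(x,x;z)\,dx\,dz$, using the definition of the field--particle density matrix $\sigma_\alpha$ from Lemma~\ref{lem:redmat} and the fact that $v$ is real. Then I would insert the partition $1 = \chi_R^2(x) + \eta_R^2(x)$ in the particle variable and, inside each resulting piece, the decomposition $1 = \chi_R(z) + (1-\chi_R(z))$ (respectively $1 = \eta_R(z) + (1-\eta_R(z))$) in the field variable. The two ``main'' terms produced, $\iint \chi_R^2(x)\chi_R(z)\,v(x-z)\sigma_\alpha$ and $\iint \eta_R^2(x)\eta_R(z)\,v(x-z)\sigma_\alpha$, can be identified as the interaction energies of $\Gamma_{\alpha,\chi_R}$ and $\Gamma_{\alpha,\eta_R}$ via the kernel formula $\sigma_{\alpha,q}(x,x;z) = q(x)^2 q(z)\,\sigma_\alpha(x,x;z)$ implied by $\sigma_q(f)=q\,\sigma(qf)\,q$ in Proposition~\ref{prop:def_localization} (using that $q\in\{\chi_R,\eta_R\}$ is real). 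The remainder $\mathcal{E}_R = \iint v(x-z)\bigl[\chi_R^2(x)(1-\chi_R(z)) + \eta_R^2(x)(1-\eta_R(z))\bigr]\sigma_\alpha(x,x;z)\,dx\,dz$ must then be shown to contribute nothing in the iterated $\liminf_R \liminf_\alpha$.

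To bound $\mathcal{E}_R$, I would apply Cauchy--Schwarz in $z$ at fixed $x$ to obtain $|\mathcal{E}_R| \le \int \bigl(\chi_R^2(x)\|v(x-\cdot)(1-\chi_R)\|_{L^2} + \eta_R^2(x)\|v(x-\cdot)(1-\eta_R)\|_{L^2}\bigr)\|\sigma_\alpha(x,x;\cdot)\|_{L^2_z}\,dx$. The inner norms are uniformly bounded by $\|v\|_{L^2}$ and, using $(1-\chi_R(x-y))^2 \le \mathbbm{1}_{|x-y|\ge R}$ together with $v\in L^2$, they vanish pointwise in $x$ as $R\to\infty$. Combined with the uniform bound $\int \|\sigma_\alpha(x,x;\cdot)\|_{L^2_z}\,dx \le C$ from Lemma~\ref{lem:redmat} (which uses $\mathrm{Tr}(\mathbbm{1}\otimes\mathcal{N}\,\Gamma_\alpha)\le C$, itself a consequence of the energy bound after absorbing the interaction via Cauchy--Schwarz), dominated convergence in $x$ gives $\mathcal{E}_R \to 0$ as $R\to\infty$ for each fixed $\alpha$.

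The crux of the proof is then upgrading this to the iterated $\liminf$ sense, namely $\limsup_R \limsup_\alpha |\mathcal{E}_R| = 0$. My plan is to split the $x$-integration at a scale $|x| = R^\beta$ with some $\beta \in (0,1)$. On the inner region $|x| \le R^\beta$, the weights satisfy $\|v(x-\cdot)(1-\chi_R)\|_{L^2}^2 \le \int_{|y|\ge R-R^\beta} v^2 \to 0$ uniformly in $x$, so this contribution is $o_R(1)$ times the uniform $L^1_x L^2_z$ bound on $\sigma_\alpha$. For the outer region I would use the pointwise Cauchy--Schwarz estimate $\|\sigma_\alpha(x,x;\cdot)\|_{L^2_z}^2 \le C\,\gamma_\alpha(x,x)\,\mathrm{Tr}_{\mathfrak{F}}\bigl((\mathcal{N}+1)\Gamma_\alpha(x,x)\bigr)$, already derived inside the proof of Lemma~\ref{lem:redmat}, combined with Cauchy--Schwarz in $x$ to reduce matters to the tail mass $\int_{|x| > R^\beta} \gamma_\alpha(x,x)\,dx$. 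This is the main obstacle: general quasi-minimizing sequences need not be tight, so uniformity in $\alpha$ is not immediate. I expect to resolve it either by extracting a subsequence in $\alpha$ along which $\gamma_\alpha$ is weakly convergent (so any residual mass at infinity is absorbed into the $\eta_R$-localized term already present on the right-hand side), or by coupling the bound with the particle and field localization estimates of Lemmas~\ref{lemma:part_localization} and~\ref{lemma:field_localization}, whose inequalities are proved in parallel with the present one within Proposition~\ref{prop:localization}.
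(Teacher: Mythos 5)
Your setup matches the paper: rewrite the interaction as $\iint v(x-z)\sigma_\alpha(x,x;z)\,dx\,dz$, insert $\chi_R^2(x)+\eta_R^2(x)=1$, split the field variable, identify the two main terms with the localized states via $\sigma_q(f)=q\,\sigma(qf)\,q$, and control the error by Cauchy--Schwarz in $z$ using the uniform $L^1_xL^2_z$ bound from Lemma~\ref{lem:redmat}. Your ``inner region'' step is also essentially the paper's first estimate: when the two cut-offs enforce $|x-z|\gtrsim R$, the factor $\|v(x-\cdot)(1-\chi_R)\|_{L^2}$ is $o_R(1)$ uniformly, and the rest is the uniform $L^1_xL^2_z$ bound. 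In the paper this corresponds to replacing $1-\chi_R(z)\le\eta_R^2(z)$ and splitting $\eta_R^2=\eta_{4R}^2+(\eta_R^2-\eta_{4R}^2)$: the $\eta_{4R}^2$ piece has $\chi_R^2(x)\eta_{4R}^2(z)\le\mathbbm{1}_{|x-z|\ge R}$.

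The gap is exactly where you flag it, and neither of the two fixes you sketch closes it. The dangerous contribution is the intermediate shell where $|x|\sim|z|\sim R$; there $|x-z|$ need not be large and nothing forces the weight to be small. Your outer-region bound reduces to $\bigl(\int_{|x|>R^\beta}\gamma_\alpha(x,x)\,dx\bigr)^{1/2}$, which need not go to zero uniformly in $\alpha$ since the sequence is not assumed tight. Passing to a weak-$\star$ convergent subsequence of $\gamma_\alpha$ does not help: weak-$\star$ convergence allows exactly this mass escape, and there is no mechanism by which an \emph{error} term in an inequality gets ``absorbed into the $\eta_R$-localized main term,'' which is already on the right-hand side and carries a sign you do not control. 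Coupling with Lemmas~\ref{lemma:part_localization} and~\ref{lemma:field_localization} does not help either, since those proofs use only the IMS formula (an $O(R^{-2})$ error) and the decay of $V$; they supply no tightness in $\alpha$.

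The missing idea is a Lions-type concentration-compactness argument via Helly's selection theorem, as in~\cite[Lemma~4.8]{LewNamRou-14}. Introduce
\begin{equation*}
Q_\alpha(R)=\iint_{\mathbb{R}^{2d}}|v(x-z)|\,\mathbbm{1}_{|z|\ge R}\,|\sigma_\alpha(x,x;z)|\,dx\,dz,
\end{equation*}
which is, for fixed $\alpha$, non-increasing in $R$ and uniformly bounded (by $\|v\|_{L^2}$ times the $L^1_xL^2_z$ bound). The intermediate-shell contribution is bounded by $Q_\alpha(R)-Q_\alpha(8R)$ (using monotonicity of $\eta$ so $\eta_R^2-\eta_{4R}^2\ge 0$ is supported in $R\le|z|\le 8R$). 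Helly's theorem extracts a subsequence along which $Q_\alpha(\cdot)$ converges pointwise to a bounded monotone $Q$, for which $\lim_R\bigl(Q(R)-Q(8R)\bigr)=0$ because $\lim_RQ(R)$ exists. This kills the intermediate piece in the iterated limit without any tightness assumption; it is the step your proposal lacks.
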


	\begin{proof}
		First, by definition of $\sigma_\alpha$
		\begin{equation*}
			\mathrm{Tr}_{\mathfrak{H}}\bigg( \int_{\mathbb{R}^d}v(\cdot-z) \left( a^\dagger_z + a_z\right) \,\Gamma_\alpha\bigg)=\iint_{\mathbb{R}^{2d}}v(x-z)  \sigma_\alpha(x,x;z)\, dxdz.
		\end{equation*}
		Using $\chi_R^2+\eta_R^2=1$ 
		we have
		\begin{equation}
			\begin{split}
				\iint_{\mathbb{R}^{2d}}v(x-z)\sigma_{\alpha}(x,x;z)dxdz =\;& \iint_{\mathbb{R}^{2d}} \chi_R^2(x) \chi_R(z) v(x-z)\sigma_{\alpha}(x,x;z)\,dxdz\\
				&+\iint_{\mathbb{R}^{2d}} \eta_R^2(x)\eta_R(z) v(x-z) \sigma_\alpha(x,x;z)\, dxdz\\
				&+\mathcal{E}_\mathrm{Int}^{(1)}+\mathcal{E}_\mathrm{Int}^{(2)},
			\end{split}
		\end{equation}
		with
		\begin{equation}
			\begin{split}
				\mathcal{E}_\mathrm{Int}^{(1)} =\;&\iint_{\mathbb{R}^{2d}} \chi_R^2(x) (1-\chi_R(z))v(x-z)\sigma_{\alpha}(x,x;z)\,dxdz\\
				\mathcal{E}_\mathrm{Int}^{(2)} =\;& \iint_{\mathbb{R}^{2d}}\eta_R^2(x) (1-\eta_R(z))v(x-z)\sigma_{\alpha}(x,x;z)\,dxdz.
			\end{split}
		\end{equation}
		Let us show that these are negligible in the limit $\alpha\to\infty$ followed by $R\to\infty$. The two terms are treated similarly, starting with $\mathcal{E}_\mathrm{Int}^{(1)}$. First, we have 
		\begin{equation*}1-\chi_R= \frac{\eta_R^2}{1+\chi_R} \le \eta_R^2.\end{equation*} 
		In addition, since 
		\begin{equation*}\eta_R^2=\eta_{4R}^2+ \eta_R^2-\eta_{4R}^2,\end{equation*}
		we have the bound
		\begin{equation} \label{eq:partial_decomp_E^1_int}
			\begin{split}
				|\mathcal{E}_\mathrm{Int}^{(1)}|\le\;&\iint_{\mathbb{R}^{2d}} \chi_R^2(x) \eta_{4R}^2(z)|v(x-z)|\,|\sigma_{\alpha}(x,x;z)|dxdz\\
				&+	\iint_{\mathbb{R}^{2d}}  \chi_R^2(x)\left( \eta_R^2(z)- \eta_{4R}^2(z)\right)|v(x-z)|\,|\sigma_{\alpha}(x,x;z)|dxdz.		
			\end{split}
		\end{equation}
		To control the first term in the right hand side we notice that $\chi_R^2(x) \eta_{4R}^2(z) \le \1_{\{|x-z|\ge R\}}$, and therefore, by Cauchy-Schwarz,
		\begin{equation*}
			\begin{split}
			\iint_{\mathbb{R}^{2d}} \chi_R^2(x)& \eta_{4R}^2(z)|v(x-z)|\,|\sigma_{\alpha}(x,x;z)|dxdz \\
			\le\;& \int_{\mathbb{R}^d} \bigg(\int_{\{|z-x|\ge R\}}|v(x-z)|^2dz \bigg)^{1/2} \bigg( \int_{\mathbb{R}^d}|\sigma_\alpha(x,x;z)|^2dz \bigg)^{1/2}dx\\
			\le\;& o_R(1)
			\end{split}
		\end{equation*}
		uniformly in $\alpha$. Here we have used the fact that $v\in L^2(\mathbb{R}^d)$, as well as \eqref{eq:summability_sigma} and the fact that the energy of $\Gamma_\alpha$ is uniformly bounded by assumption.
		
		For the second term in the decomposition \eqref{eq:partial_decomp_E^1_int} of $\mathcal{E}^{(1)}_\mathrm{Int}$ we argue using an adaptation of Lions' concentration-compactness argument, already used in \cite[Lemma 4.8]{LewNamRou-14}. Let us define the function
		\begin{equation*}
			Q_\alpha(R)=\iint_{\mathbb{R}^{2d}} |v(x-z)|\,\1_{|z| \ge R}\, |\sigma_{\alpha}(x,x;z)|dxdz.
		\end{equation*}
		Then (recall that $\eta_R^2 \ge \eta_{4R}^2$ since $\eta$ is monotone)
		\begin{equation*}
			\iint_{\mathbb{R}^d} \chi_R^2(x)\left( \eta_R^2(z)- \eta_{4R}^2(z)\right)|v(x-z)|\,|\sigma_{\alpha}(x,x;z)|dxdz \le Q_\alpha(R)-Q_\alpha(8R).
		\end{equation*}
		Now, for fixed $\alpha$, the function $R\mapsto Q_\alpha(R)$ is non-increasing on $[0,\infty)$, and
		\begin{equation*}
			0\le Q_\alpha(R) \le \|v\|_{L^2} \int_{\mathbb{R}^d}\left( \int_{\mathbb{R}^d}|\sigma_\alpha(x,x;z)|^2dz \right)^{1/2}dx\le C_1
		\end{equation*}
		uniformly in $\alpha$ and $R$ thanks to \eqref{eq:summability_sigma} and the fact that the energy of $\Gamma_\alpha$ is uniformly bounded. We apply the above along a sequence in $\alpha$ attaining the limsup of $Q_\alpha (R) - Q_\alpha (8R)$.  Then, by Helly's selection principle, there exists a subsequence $\alpha_k$ and 	a decreasing function $Q:[0,\infty)\to [0,C_1]$ such that
		\begin{equation*}
			\lim_{k\to\infty} Q_{\alpha_k}(R)=Q(R),\qquad\forall R \in[0,\infty).
		\end{equation*}
		Since $\lim_{R\to\infty} Q(R)$ exists by monotonicity and is finite, we conclude
		\begin{equation*}
			\lim_{R \to\infty} \limsup_{\alpha \to \infty}\left( Q_{\alpha}(R)- Q_{\alpha}(8R) \right) = \lim_{R \to\infty} \lim_{k \to \infty}\left( Q_{\alpha_k}(R)- Q_{\alpha_k}(8R) \right) =\lim_{R\to\infty}\left( Q(R)-Q(8R) \right)=0.
		\end{equation*}
		This implies that 
		$$\lim_{R \to\infty} \lim_{\alpha \to \infty}\left( Q_{\alpha}(R)- Q_{\alpha}(8R) \right) = 0$$
		because the left-hand side is always non-negative, and we conclude that
		\begin{equation*}
			\lim_{R \to\infty}\lim_{\alpha\to\infty}\mathcal{E}^{(1)}_\mathrm{Int}=0
		\end{equation*}
		 We argue similarly to obtain $\mathcal{E}_\mathrm{Int}^{(2)} \to 0$.
	\end{proof}

	We now conclude the 
	\begin{proof}[Proof of Proposition \ref{prop:localization}]
		The result follows immediately from Lemma \ref{lemma:part_localization}, \ref{lemma:field_localization}, and \ref{lemma:interaction_localization} after recalling the expressions of the reduced density matrices of the localized states $\Gamma_{\alpha,\chi_R}$ and $\Gamma_{\alpha,\eta_R}$ from Proposition \ref{prop:def_localization}.
	\end{proof}
	
	\section{Quasi-classical measures} \label{sect:deF}

	We revisit the construction of quasi-classical measures from~\cite{Falconi-18a,Falconi-18b,CorFalOli-19,CorFalOli-20}, linking them with the approach of~\cite{LewNamRou-14d}. Slightly improved statements are obtained by using anti-Wick rather than Weyl quantization in the basic definition of the measures, but otherwise the spirit is extremely similar. Related statements and ideas may be found in~\cite{FanLewVer-88}.
	
	\subsection{Notation}

For a complex separable Hilbert space $\gH$ we denote $\cB (\gH)$ the set of bounded operators acting thereon, $\cB (\gH)^*$ its dual and $\cS (\gH)$ the state-space, i.e. 
\begin{equation}\label{eq:states}
 \cS (\gH) := \left\{ \omega \in \cB (\gH)^*, \omega (B) \geq 0 \mbox{ for all } 0 \leq B \in \cB (\gH), \omega (\1) = 1 \right\}.
\end{equation}
These are ``abstract states'' by opposition to trace-class operators, i.e. normal states. One advantage in considering them is that a sequence of abstract states always has a weak-$\star$ cluster point which is a state. A bit of care is needed in using the weak-$\star$ topology on $\cB (\gH)^*$ because the pre-dual $\cB (\gH)$, is not, in infinite dimension, separable. The compactness of sequences of states thus takes the form that (by the Banach-Alaoglu Theorem) given a sequence $(\omega_n)_n\in \cS (\gH)^\bN$ there is a $\omega\in \cS (\gH)$ such that $\omega_n$ converges to $\omega$ along a subnet. This means that for any $B\in \cB(\gH)$
\begin{equation}\label{eq:extract abstract}
\omega_{h(\alpha)} (B) \to \omega (B) 
\end{equation}
where $h:A\mapsto \mathbb{N}$ is a monotone cofinal function from some directed set $A$ to the integers. It is important to be able to test against the identity operator in \eqref{eq:extract abstract}, to ensure that $\omega$ is a state. With an abuse of notation we denote this convergence by 
\begin{equation}\label{eq:CV net}
\omega_n \cvnet \omega
\end{equation}
where an extraction is implied.

\subsection{The theorem}

Let $\gh,\gH$ be two separable complex Hilbert spaces. We are interested in states of the composite system with Hilbert space 
\begin{equation*} 
\gHt := \gh \otimes \gF(\gH)
\end{equation*}
where $\gF(\gH)$ is the bosonic Fock space constructed from $\gH$. We denote by $\cN$ the number operator on $\gF(\gH)$ and 
\begin{equation*} \gH_n := \gH^{\otimes_s n}\end{equation*}
the $n$-particles sector. For a state $\Gamma$ on $\gHt$, $ \langle \, O \, \rangle_{\Gamma} $ denotes the expectation value of $O$ in $\Gamma$.

For facilitated comparison with~\cite{LewNamRou-14d} we here follow the convention that annihilation and creation operators are unscaled (contrarily to the convention in~\eqref{eq:scaleop}), so that the CCR takes the form~\eqref{eq:CCR}
\begin{equation} \label{eq:CCRagain}
[ c(f),c(g)]=[c^\dagger(f),c^\dagger(g)]=0,\qquad[c(f),c^\dagger(g)]=\langle f,g\rangle, \qquad \forall f,g\in L_\mathrm{field}^2(\mathbb{R}^d)
\end{equation}
for the creation and annihilation operators (cf~\eqref{eq:annihi}) 
\begin{equation*}c^\dagger (f) = \alpha \ada (f) , \quad c(f)= \alpha a(f).\end{equation*}

\begin{definition}[\textbf{Reduced density matrices}]\label{def:sym state}\mbox{}\\
Let $\Gamma \in \cS (\gHt)$ be a state over $\gHt$. We define reduced densities $\Gamma ^{(k,\ell)}$ as maps from $\cB(\gh)$ to $\cB (\gH_\ell,\gH_k)$ (the bounded operators from $\gH_\ell$ to $\gH_k$) by the formula
\begin{equation}\label{eq:red dens}
\left\langle f_1\otimes_s \ldots \otimes_s f_k |\Gamma ^{(k,\ell)} (A)  g_1\otimes_s \ldots \otimes_s g_\ell\right\rangle := \left\langle A \otimes \cda (g_1) \ldots \cda (g_\ell) c (f_1) \ldots c(f_k) \right\rangle_\Gamma
\end{equation}
where $A\in \cB (\gh)$ and $f_1,\ldots,f_k,g_1,\ldots,g_\ell \in \gH$. The definition makes sense as soon as 
\begin{equation*} \left\langle \1\otimes \cN ^{\frac{k+\ell}{2}}\right\rangle_\Gamma < \infty\end{equation*}
where 
\begin{equation*} \cN = \sum_{j\geq 1} \cda (f_j) c(f_j)\end{equation*}
for any orthonormal basis $(f_j)_j$ of $\gH$.\hfill$\diamond$
\end{definition}

\begin{definition}[\textbf{Anti-Wick observables}]\label{def:a-Wick}\mbox{}\\
To any $u\in \gH$  we associate a \emph{coherent state} on $\gF(\gH)$
\begin{equation}
\xi(u):=e^{-\frac{|u|^2}{2}} \bigoplus_{j \ge 0} \frac{1}{\sqrt{j!}} u^{\otimes j}\in\gF(\gH).
\label{eq:def_coherent}
\end{equation}
For any sequence $\eps\to0$ and any finite-dimensional subspace $V$ of $\gH$ we define the \emph{anti-Wick quantization} of a continuous function with compact support $b\in C^0_c(V)$ at scale $\eps$, by
\begin{equation}
b^{\rm aW}_{\eps}:=(\eps\pi)^{-\dim(V)}\int_{V} b(u)\;\left|\xi\left(u/\sqrt{\eps}\right)\right\rangle\left\langle\xi\left(u/\sqrt{\eps}\right)\right|\, du.
\label{eq:def_quantization_b}
\end{equation}
\hfill$\diamond$
\end{definition}
%

We aim at proving the 

\begin{theorem}[\textbf{Quantum de Finetti for composite systems}]\label{thm:deF comp}\mbox{}\\
Consider a sequence $\eps \to 0$ of positive parameters, and associated sequence $\Gamma_\eps$ of states over $\gHt$ satisfying
\begin{equation} 
\left\langle \left(\eps \cN \right)^\kappa \right\rangle_{\Gamma_{\eps}} < + \infty \label{eq:control_N} \end{equation}
uniformly in $\eps$, for some $1 \leq \kappa$. 

There exists a probability measure $\mu \in \cP (\gH)$ and a $\mu$-measurable map 
\begin{equation*} \omega : \begin{cases}
             \gH \to \cS (\gh)\\
             u \mapsto \omega_u
            \end{cases}
\end{equation*}
with values in the state-space of $\gh$ such that, 
\begin{enumerate}
 \item Expectations of anti-Wick observables converge and define the measure:  along a subnet, for all $A\in \cB(\gh)$, $V \subset \gH$ a finite-dimensional subspace and all $b\in C^0_c (V)$ (continuous functions with compact support in $V$) we have 
 \begin{equation}\label{eq:AW CV comp}
  \left\langle A \otimes b^{\rm aW}_{\eps} \right\rangle_{\Gamma_\eps} \to \int_{ \gH} \omega_u (A) b(u) d\mu (u) 
 \end{equation}
\item Reduced density matrices converge: along a subsequence, for $A$ a compact operator or the identity\footnote{In fact, modulo a subsequence, we can test with $A$ in any separable subspace of the bounded operators.}
\begin{equation}\label{eq:CV DM comp}
\sqrt{k! \ell ! \eps ^k \eps^\ell}\Gamma_\eps ^{(k,\ell)} (A) \underset{\eps\to 0}{\wto} \int_{\gH}  \omega_u (A) |u^{\otimes k} \rangle \langle u ^{\otimes \ell}| d\mu (u)
\end{equation}
weakly-$\star$ in the trace-class for all $k,\ell$ satisfying $\frac{k+\ell}{2} \leq \kappa$. More precisely
\begin{equation*}
\sqrt{k! \ell ! \eps ^k \eps^\ell}\left\langle A \otimes \cda (g_1) \ldots \cda (g_\ell) c (f_1) \ldots c(f_k) \right\rangle_{\Gamma_{\eps}} \underset{\eps\to 0}{\to} \int_{\gH}  \omega_u (A) \prod_{j=1} ^k \langle f_j | u \rangle \prod_{j=1}^\ell \langle u | g_j \rangle d\mu (u)
\end{equation*}
for all $f_1,\ldots,f_k,g_1,\ldots,g_\ell \in \gH$.
\end{enumerate}
\end{theorem}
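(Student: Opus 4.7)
The plan is to extract a joint weak-$\star$ limit of the anti-Wick expectations
$\Lambda_\eps(A,b):=\langle A\otimes b^{\rm aW}_\eps\rangle_{\Gamma_\eps}$
and then derive \eqref{eq:CV DM comp} by re-expressing Wick-ordered monomials as anti-Wick ones. The basic bound $\|b^{\rm aW}_\eps\|\le\|b\|_\infty$ follows from the positivity of the coherent-state projectors combined with the resolution of identity on $\gF(V)$, which also shows that anti-Wick quantization is positivity preserving. Consequently $|\Lambda_\eps(A,b)|\le\|A\|\,\|b\|_\infty$, and a diagonal Banach--Alaoglu extraction -- over a countable norm-dense family of $A$'s in the compact operators on $\gh$ (together with $A=\1$), and $b$'s in $C^0_c(V_n)$ for an increasing exhausting sequence $(V_n)_n$ of finite-dimensional subspaces of $\gH$ -- yields a subnet along which $\Lambda_\eps\to\Lambda$ jointly.

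For each $A\ge 0$, positivity makes $b\mapsto\Lambda(A,b)$ a positive Radon measure on every $V_n$, and these measures are consistent under the canonical projections $V_{n+1}\to V_n$: marginalizing a symbol on $V_{n+1}$ corresponds, on the operator side, to partial-tracing the excess Fock factor against the vacuum, which is precisely what Definition~\ref{def:a-Wick} already produces. Kolmogorov's extension theorem then gives a Borel measure $\mu_A$ on the algebraic union of the $V_n$'s. Setting $\mu:=\mu_{\1}$, the operator inequality $|A|\le\|A\|\,\1$ yields $|\mu_A|\le\|A\|\mu$, so the Radon--Nikodym derivative $\omega_u(A):=d\mu_A/d\mu(u)$ is well-defined; a routine check on a countable norm-dense subfamily of $\cB(\gh)$ promotes this pointwise data to a $\mu$-almost-everywhere defined abstract state $\omega_u\in\cS(\gh)$, establishing \eqref{eq:AW CV comp}.

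The main obstacle is showing that $\mu$ is a genuine probability measure on $\gH$ rather than a sub-probability with mass escaping either to infinity within some finite-dimensional slice or along the orthogonal directions of the cylindrical extension. This is precisely where \eqref{eq:control_N} enters. For $b^V_R\in C^0_c(V)$ a smooth cutoff with $0\le b^V_R\le 1$ and $b^V_R\equiv 1$ on $\{\|u\|\le R\}\cap V$, a direct coherent-state computation produces an operator bound $\1-(b^V_R)^{\rm aW}_\eps\le CR^{-2\kappa}(\eps\cN)^\kappa$ up to a contribution supported on the $V^\perp$ sector, itself dominated by $\eps\langle\cN\rangle$. Testing against $\Gamma_\eps$ and using \eqref{eq:control_N} yields the uniform tightness bound $\mu(\{\|P_{V_n}u\|\ge R\})\le CR^{-2\kappa}$, from which a Prokhorov-type argument on the projective system forces $\mu(\gH)=1$.

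Finally, for \eqref{eq:CV DM comp} I would use that the polynomial symbol
$b_{f,g}(u):=\prod_{j=1}^k\langle f_j,u\rangle\prod_{j=1}^\ell\langle u,g_j\rangle$
has anti-Wick quantization equal to $\eps^{(k+\ell)/2}\cda(g_1)\cdots\cda(g_\ell)c(f_1)\cdots c(f_k)$ plus a linear combination of normal-ordered products of total degree strictly less than $k+\ell$, weighted by extra powers of $\eps$ arising from commutators in the reordering (cf.~\cite{LewNamRou-14d}). Testing against $\Gamma_\eps$ and using $\langle\1\otimes(\eps\cN)^\kappa\rangle_{\Gamma_\eps}\le C$ with $(k+\ell)/2\le\kappa$ controls every remainder. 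Passing $\eps\to 0$ along the extracted subnet, and extending \eqref{eq:AW CV comp} from $C^0_c$ to polynomially-bounded symbols via the tightness just established, produces the claimed reduced density matrix convergence.
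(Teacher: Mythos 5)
Your proposal is correct in spirit but takes a genuinely different route from the paper. The paper does not re-derive the measure existence for the composite system from scratch: it reduces to the already-established grand-canonical quantum de Finetti theorem (Theorem~\ref{thm:deF}, from~\cite{LewNamRou-14d}) by conditioning on $A$. Concretely, for each \emph{positive} $A\in\cB(\gh)$ the paper forms the (non-normalized) state $\Gamma^A_\eps$ on $\gF(\gH)$ defined by $\langle B\rangle_{\Gamma^A_\eps}:=\langle A\otimes B\rangle_{\Gamma_\eps}$, applies Theorem~\ref{thm:deF} to obtain a measure $\mu_A$ with $\langle A\otimes b^{\rm aW}_\eps\rangle_{\Gamma_\eps}\to\int b\,d\mu_A$, uses the domination $\mu_A\le\|A\|\,\mu_\1$ and Radon--Nikodym to produce the density $\omega_u(A):=d\mu_A/d\mu_\1$, and then extends to all bounded $A$ by the decomposition $A=A_{r+}-A_{r-}+\im A_{i+}-\im A_{i-}$. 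Item~(2) is obtained the same way: apply the convergence of reduced density matrices from Theorem~\ref{thm:deF} to each $\Gamma^A_\eps$ and identify the limits against~\eqref{eq:AW CV comp}. All the heavy analysis -- tightness from the moment bound~\eqref{eq:control_N}, the anti-Wick/Wick comparison with commutator remainders, and the Prokhorov/Skorokhod argument upgrading the cylinder measure to a Borel probability on $\gH$ -- is thus black-boxed.

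You instead re-prove those ingredients directly in the composite setting: the Kolmogorov extension over the projective system of $V_n$, the tightness estimate $\1-(b^V_R)^{\rm aW}_\eps\le CR^{-2\kappa}(\eps\cN)^\kappa$ plus a $V^\perp$ remainder, and the anti-Wick expansion of $b_{f,g}$. This is architecturally sound and is essentially the route of~\cite{LewNamRou-14d} itself, adapted to the extra $\cB(\gh)$ factor; it has the merit of being self-contained. Two caveats: (i) the tightness step is asserted rather than proved, and the $V^\perp$ remainder you describe is only \emph{bounded} by $\eps\langle\cN\rangle$ rather than small, so upgrading the cylinder measure to a genuine Borel probability on $\gH$ requires the full Prokhorov/Skorokhod machinery that your "Prokhorov-type argument" only gestures at; the paper avoids this entirely by invoking Theorem~\ref{thm:deF}. (ii) Your diagonal extraction over a countable norm-dense family in the compacts (plus $\1$) gives a subsequence, hence Item~(2) and Item~(1) for $A$ compact or identity; but the paper's Item~(1) is stated for all bounded $A$, and because $\cB(\gh)$ is non-separable this requires the subnet extraction~\eqref{eq:extract abstract}, which your write-up does not address.
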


We shall rely on a version of the above in the non-composite case (where $\gh \otimes \gF(\gH)$ is replaced by $\gF(\gH)$) from~\cite[Sections 4 and 6]{LewNamRou-14d}. This is also contained in~\cite{AmmNie-08,AmmNie-09} where the  construction is rather based on Weyl observables/quantizations rather than anti-Wick as we use here. 

Before proceeding to the proof, we state as corollary the convergence of observables akin to the interaction energy of our main model. 

\begin{corollary}[\textbf{Quantum de Finetti and the particle-field density matrix}]\label{cor:deFint}\mbox{}\\
Let $\gh = L^2 (\R^d)$, $v\in L^2 (\R^d,\R)$. Under the assumptions above (for $\kappa = 1$), after extracting a subsequence
\begin{equation}\label{eq:CV mixed DM}
\sqrt{\eps} \left\langle \int_{\R^d} v(\cdot - y) \left( \cda_y + c_y \right) dy \right\rangle_{\Gamma_{\eps}} \underset{\eps\to 0}{\to} \int_{\gH} \int_{\R^d} \omega_u (v(\cdot-y)) \left( u (y) + \overline{u(y)}\right) dy \,d\mu (u)
\end{equation}
where $v(\cdot-y)$ is understood as a multiplication operator on $\gh$. In other words 
\begin{equation}\label{eq:CV mixed DM bis}
\iint_{\R^2 \times \R^2} \sqrt{\eps}\sigma_{\eps} (x,x;z) v(x-z) dx dz \underset{\eps\to 0}{\to} \int_{\gH} \int_{\R^d} \omega_u (v(\cdot-z)) \left( u (z) + \overline{u(z)}\right) dz\, d\mu (u)
\end{equation}
where $\sigma_{\eps} (x,y;z)$ is the integral kernel of the field-particle density matrix of $\Gamma_\eps$, as defined in Section~\ref{sec:DMs}.
\end{corollary}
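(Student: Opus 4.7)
The plan is to reduce the convergence to a finite-rank approximation of the kernel $v(x-z)$ for which Theorem~\ref{thm:deF comp} part~(2) applies term by term. The approximation errors will be handled by combining the uniform $L^1_x(L^2_z)$ bound on $\sqrt{\eps}\sigma_\eps$ with the tightness of the particle reduced density $\gamma_\eps$, which itself is a consequence of Theorem~\ref{thm:deF comp}.

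First I would observe that by Definition~\ref{def:reduced_densities} the equivalence between \eqref{eq:CV mixed DM} and \eqref{eq:CV mixed DM bis} is immediate. Scaling~\eqref{eq:summability_sigma} via $a_z=\sqrt{\eps}c_z$ and using $\langle\eps\cN\rangle_{\Gamma_\eps}\leq C$ gives the uniform bound $\sqrt{\eps}\int_{\R^d}\|\sigma_\eps(x,x;\cdot)\|_{L^2(\R^d)}dx\leq C$, and restricting the Cauchy-Schwarz step of Lemma~\ref{lem:redmat} to $\{|x|>R\}$ further yields
\begin{equation*}
\sqrt{\eps}\int_{|x|>R}\|\sigma_\eps(x,x;\cdot)\|_{L^2}dx\leq C\Big(\int_{|x|>R}\gamma_\eps(x,x)dx\Big)^{1/2}.
\end{equation*}
Next I would use Theorem~\ref{thm:deF comp} part~(2) with $k=\ell=0$ to obtain weak-$\star$ trace-class convergence of $\gamma_\eps$ to $\int_\gH\omega_u\,d\mu(u)$. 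Since both sides have trace equal to $1=\mu(\gH)$, this convergence is in fact strong in trace class, which implies the tightness $\lim_{R\to\infty}\limsup_{\eps\to 0}\int_{|x|>R}\gamma_\eps(x,x)dx=0$. Combined with the previous display, this gives $\lim_{R\to\infty}\limsup_{\eps\to 0}\sqrt{\eps}\int_{|x|>R}\|\sigma_\eps(x,x;\cdot)\|_{L^2}dx=0$. The uniform $L^1_x(L^2_z)$ bound together with density of $C_c^\infty(\R^d)$ in $L^2(\R^d)$ moreover reduces the proof to the case $v\in C_c^\infty(\R^d)$.

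For such $v$ and $x$ confined to a ball $\{|x|\leq R\}$, the map $x\mapsto v(x-\cdot)\in L^2(\R^d)$ is continuous with relatively compact image, so for every $\delta>0$ it can be uniformly approximated on $\{|x|\leq R\}$ by a finite sum $\sum_{j=1}^N\phi_j(x)\psi_j$ with $\phi_j\in C_c(\R^d,\R)$ and $\psi_j\in C_c^\infty(\R^d,\R)$. The error in $\sqrt{\eps}\iint v(x-z)\sigma_\eps(x,x;z)dxdz$ is then $O(\delta)$ on $\{|x|\leq R\}$ plus an $x$-tail controlled by the tightness above, and the proof reduces to showing
\begin{equation*}
\sqrt{\eps}\sum_{j=1}^N\big\langle \phi_j\otimes(\cda(\psi_j)+c(\psi_j))\big\rangle_{\Gamma_\eps}\to\sum_{j=1}^N\int_\gH\omega_u(\phi_j)\big(\langle u,\psi_j\rangle+\langle\psi_j,u\rangle\big)d\mu(u),
\end{equation*}
with $\phi_j$ understood as a multiplication operator on $\gh=L^2(\R^d)$. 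These bounded multiplication operators lie in a separable subspace of $\cB(\gh)$, so the footnote in Theorem~\ref{thm:deF comp} together with a diagonal extraction allow to apply part~(2) with $k+\ell=1$ to each of these finitely many terms. The identity $\int\psi_j(z)(u(z)+\overline{u(z)})dz=\langle u,\psi_j\rangle+\langle\psi_j,u\rangle$ for real $\psi_j$ then matches the resulting right-hand side with the desired $\int_\gH\int_{\R^d}\omega_u(v(\cdot-z))(u(z)+\overline{u(z)})dzd\mu(u)$.

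The main obstacle is coupling the three approximation parameters (the $L^2$-density step on $v$, the spatial cutoff $R$, and the finite-rank approximation of the translation orbit of $v$) and taking their limits $\delta\to 0$, $R\to\infty$ after $\eps\to 0$ while keeping all control uniform in $\eps$. This is exactly what the three bounds recalled above are designed for: the uniform $L^1_x(L^2_z)$ control handles the $L^2$-density step, the tightness of $\gamma_\eps$ (derived from the theorem itself) handles the spatial cutoff, and compactness of $\{v(\cdot-x)\}_{|x|\leq R}$ in $L^2(\R^d)$ handles the finite-rank approximation.
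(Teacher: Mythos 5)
Your outline shares several ingredients with the paper's proof (the uniform $L^1_x(L^2_z)$ control on $\sqrt{\eps}\sigma_\eps(x,x;\cdot)$, the reduction to smooth compactly supported $v$, the finite-rank approximation of $x\mapsto v(x-\cdot)$, and the term-by-term application of Theorem~\ref{thm:deF comp} with $k+\ell=1$), but the step where you dispose of the tail $\{|x|>R\}$ contains a genuine gap. You claim that the weak-$\star$ trace-class convergence of $\gamma_\eps$, obtained from Theorem~\ref{thm:deF comp}(2) with $k=\ell=0$, upgrades to strong convergence ``since both sides have trace equal to $1$''. This is false in general: the weak-$\star$ limit is $\gamma_\infty=\int_\gH\omega_u^{\nor}\,d\mu(u)$, where $\omega_u^{\nor}$ is the \emph{normal part} of the abstract state $\omega_u$, and testing against the identity only yields the tautology $1=\Tr[\gamma_\eps]\to\int_\gH\omega_u(\1)\,d\mu(u)=1$. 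Since $\omega_u$ need not be normal, one may have $\Tr[\omega_u^{\nor}]<\omega_u(\1)=1$ and hence $\Tr[\gamma_\infty]<1$ --- this is precisely the phenomenon of mass escaping to infinity, which nothing in the assumptions of Theorem~\ref{thm:deF comp} rules out. Establishing $\Tr[\gamma_\infty]=1$ is exactly the content of Lemma~\ref{lem:mass}, whose proof requires the localization estimates of Section~\ref{sect:loc}, Theorem~\ref{thm:ener}, and the strict inequality $E^{(V)}_\mathrm{Pek}(1)<E^{(0)}_\mathrm{Pek}(1)$; since Theorem~\ref{thm:ener} itself invokes Corollary~\ref{cor:deFint}, appealing to tightness here would be circular.

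The paper avoids this with a compactness-through-duality argument in place of tightness: the uniformly bounded family $\sqrt{\eps}\sigma_\eps(x,x;\cdot)\in L^1_x(L^2_z(\R^d))$ is viewed inside the dual of the Banach space $C^{0,b}_x(L^2_z(\R^d))$ of bounded continuous $L^2_z$-valued functions of $x$, so a weak-$\star$ convergent subsequence exists by Banach--Alaoglu without any mass control, and the relevant test function $x\mapsto v(x-\cdot)$ belongs to $C^{0,b}_x(L^2_z)$ because $v\ast v$ is continuous for $v\in L^2$. The limit is then identified, for smooth compactly supported $v$, by a Riemann-sum/tiling argument close in spirit to your finite-rank approximation. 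To salvage your proof you should replace the appeal to tightness of $\gamma_\eps$ by this duality step.
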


\begin{proof}
By arguments mimicking~\eqref{eq:summability_sigma} we find that 
\begin{align*}
\int_{\R^d} \left( \int_{\R^d} \left|\sqrt{\eps}\sigma_{\eps} (x,x;z)\right|^2 dz \right) ^{1/2}dx &\leq C \int_{\R^d} \mathrm{Tr}_\gF \left[\Gamma_\eps (x,x)\right] ^{1 / 2} \left(\int_{\R^d} \mathrm{Tr}_\gF [\Gamma_\eps (x,x) \eps a^\dagger_z a_z] dz \right)^{1/2} dx \\
&\leq C\left\langle \1 \otimes \left(\eps \cN +1 \right)\right\rangle_{\Gamma_\eps}.
\end{align*}
Hence $(x,z)\mapsto \sqrt{\eps} \sigma_{\eps} (x,x;z)$ is uniformly bounded as a sequence in $L^1_x (L^2_z (\R^d))$, which is a subset of the dual of the Banach space $C_x^{0,b} (L^2 (\R^d))$ of bounded continuous functions with values in $L^2 (\R^d)$ (see e.g.~\cite{HytNeeVerWei-16}). Hence, modulo extraction of a subsequence 
\begin{equation*} 
\sqrt{\eps}\iint_{\R^2 \times \R^2} \sigma_{\eps} (x,x;z) \phi(x,z) dx dz \underset{\eps \to 0}{\to} \iint_{\R^2 \times \R^2} \sigma_{0} (x,x;z) \phi(x,z) dx dz
\end{equation*}
for any $\phi\in C_x^0 (L^2 (\R^d)),$ with $\sigma_0$ a Radon measure over $L^2 (\R^d)$ (with a slight abuse of notation in the right-hand side of the above). For $v\in L^2 (\R^d)$, the map $(x,z) \mapsto v(x-z)$ is  in $C_x^{0,b} (L^2_z (\R^d))$ since the statement 
\begin{equation*}
			\lim_{x\to x_0} \int_{\mathbb{R}^d} \big| v(x-y)-v(x_0-y)\big|^2 dy=0
		\end{equation*}
is equivalent to $v\ast v$ being continuous at $x_0$, which is true for $v\in L^2 (\R^d)$ by~\cite[Proposition 8.8]{Folland-99}. 

Thus we may assume that the left-hand sides of~\eqref{eq:CV mixed DM}-\eqref{eq:CV mixed DM bis} converge for any $v\in L^2 (\R^d)$. We now identify the limit $\sigma_0$ with the help of Theorem~\ref{thm:deF comp}. By density we may restrict to testing with a smooth compactly supported $v$ if needed, so that the multiplication operator $v(\cdot-y)$ is bounded on $\gh$. Theorem~\ref{thm:deF comp} implies that, along a subsequence, for any such $v$, $x_0 \in \R^d$ and $f\in L^2 (\R^d)$, 
\begin{equation}\label{eq:CV mixed 1}
\sqrt{\eps} \left\langle v (\cdot - x_0) \otimes \left( \cda (f) + c (f) \right)\right\rangle_{\Gamma_\eps} \to \int_{\gH}  \omega_u \left(v (\cdot - x_0)\right)  \left(\langle f | u \rangle + \langle u| f \rangle \right) d\mu (u).
\end{equation}
Introduce now a tiling $(Q^\eps_n)_{0\leq n \leq N}$ of $[0,R_\eps]^d$, say with squares of centers $x_n$ and vanishing side-length when $\eps \to 0$, where $R_\eps \to \infty$. We claim that, as operators, 
\begin{equation}\label{eq:Riemann}
 \sqrt{\eps}\left| \int_{\R^d} v(\cdot - y) \left( \cda_y + c_y\right) dy - \sum_n \left(\cda \left(\1_{Q^\eps_n}\right) + c \left(\1_{Q^\eps_n}\right)\right) v(\cdot-x_n) 
\right| \leq o_\eps (1) \left(\eps \cN +1\right). 
\end{equation}
Indeed 
a Cauchy-Schwarz inequality gives
\begin{multline*}
 \sqrt{\eps}\left| \int_{\R^d} v(\cdot - y) \left( \cda_y + c_y \right) dy - \sum_n \left(\cda \left(\1_{Q^\eps_n}\right) + c \left(\1_{Q^\eps_n}\right)\right) v(\cdot-x_n) 
\right| 
\\ = \left| \sum_n \int_{\R^d} \left(v(\cdot - y) - v (\cdot-x_n)\right) \left( \sqrt{\eps}\cda_y  + \sqrt{\eps}c_y\right) \1_{Q^\eps_n}(y)dy \right|
\\ \leq C \delta \eps \sum_n \int_{\R^d} \cda_y c_y \1_{Q^\eps_n} (y) dy + C\delta^{-1} \sum_n \int_{\R^d} \left(v(\cdot - y) - v (\cdot-x_n)\right)^2 \1_{Q^\eps_n} (y)\, dy\\
\leq C  \delta \eps \cN + \frac{C}{\delta} o_\eps (1)
\end{multline*}
using that 
\begin{equation*}
\sum_n \1_{Q^\eps_n} \equiv 1,
\end{equation*}
recognizing a Riemann sum and using that $v \in L^2 (\R^d).$ Choosing $\delta = \delta_\eps \to 0$ suitably slowly vindicates~\eqref{eq:Riemann}.

Next we obtain, after possibly a further extraction of subsequence
\begin{multline}\label{eq:CV mixed DM pre}
 \sqrt{\eps} \left\langle \int_{\R^d} v(\cdot - y) \left( \cda_y + c_y \right) dy \right\rangle_{\Gamma_{\eps}} - \sum_n \int_{\gH} \int_{\R^d} \omega_u (v(\cdot-x_n)) \left( u (y) + \overline{u(y)}\right) \1_{Q^\eps_n} (y) \,dy d\mu (u) \\ 
 \underset{\eps\to 0}{\to} 0. 
\end{multline}
This follows from~\eqref{eq:Riemann}, for each term 
\begin{equation*}\left(\cda \left(\1_{Q^\eps_n}\right) + c \left(\1_{Q^\eps_n}\right)\right) v(\cdot-x_n)\end{equation*}
is amenable to the use of~\eqref{eq:CV mixed 1}. With a suitable truncation of the sum in and a diagonal extraction we obtain convergence for each term and the sum along a common subsequence. Finally, the second term in the left-hand side of~\eqref{eq:CV mixed DM pre} equals the right-hand side of~\eqref{eq:CV mixed DM} by another Riemann sum argument, recalling that we may work with a smooth compactly supported $v$.
\end{proof}

\subsection{Proof of Theorem~\ref{thm:deF comp}}

 We recall the statement of~\cite[Theorem~4.2]{LewNamRou-14d}:

\begin{theorem}[\textbf{Grand-canonical quantum de Finetti theorem}]\label{thm:deF}\mbox{}\\
Consider a sequence $\eps \to 0$ of positive parameters, and associated sequence $\Gamma_\eps$ of states over $\gF(\gH)$ satisfying
\begin{equation*} \left\langle \left(\eps \cN \right)^\kappa \right\rangle_{\Gamma_{\eps}} < + \infty \end{equation*}
uniformly in $\eps$, for some $1 \leq \kappa$. 

There exists a unique probability measure $\mu \in \cP (\gH)$ such that, modulo the extraction of a subsequence,

\begin{enumerate}
 \item Expectations of anti-Wick observables converge and define the measure: for all $V \subset \gH$ a finite-dimensional subspace and $b\in C^0_c (V)$ we have 
 \begin{equation}\label{eq:AW CV}
  \left\langle b^{\rm aW}_{\eps} \right\rangle_{\Gamma_\eps} \to \int_{\mathfrak{H}} b(u) d\mu (u) 
 \end{equation}
\item Reduced density matrices converge
\begin{equation}\label{eq:CV DM}
\sqrt{k! \ell ! \eps ^k \eps^\ell}\Gamma_\eps ^{(k,\ell)} \underset{\eps\to 0}{\wto} \int_{\gH}  |u^{\otimes k} \rangle \langle u ^{\otimes \ell}| d\mu (u)
\end{equation}
weakly-$\star$ in the trace-class for all $k,\ell$ satisfying $\frac{k+\ell}{2} \leq \kappa$. In particular
\begin{equation*}
\sqrt{k! \ell ! \eps ^k \eps^\ell}\left\langle \cda (g_1) \ldots \cda (g_\ell) c (f_1) \ldots c(f_k) \right\rangle_{\Gamma_{\eps}} \underset{\eps\to 0}{\to} \int_{\gH}  \prod_{j=1} ^k \langle f_j | u \rangle \prod_{j=1}^\ell \langle u | g_j \rangle d\mu (u)
\end{equation*}
for all $f_1,\ldots,f_k,g_1,\ldots,g_\ell \in \gH$.
\end{enumerate}
\end{theorem}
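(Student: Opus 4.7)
The plan is to construct $\mu$ as the limit of a consistent family of Radon measures on finite-dimensional subspaces $V\subset\gH$ produced by anti-Wick expectations, then to promote this cylindrical distribution to a bona fide Radon probability measure on $\gH$ using tightness furnished by the number bound~\eqref{eq:control_N}. The convergence of reduced density matrices will then follow by comparing anti-Wick quantizations of polynomial monomials with normal-ordered products of $c,\cda$.

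First I would observe that for any finite-dimensional $V\subset\gH$ and any $b\in C^0_c(V)$, the anti-Wick operator defined in~\eqref{eq:def_quantization_b} is an integral of positive rank-one projections against a continuous compactly-supported weight, and satisfies $\|b^{\rm aW}_\eps\|\leq\|b\|_\infty$ via the resolution of identity $(\eps\pi)^{-\dim V}\int|\xi(u/\sqrt\eps)\rangle\langle\xi(u/\sqrt\eps)|\,du=P_{\gF(V)}$. Hence $b\mapsto\langle b^{\rm aW}_\eps\rangle_{\Gamma_\eps}$ is a sub-probability Radon measure $\mu^\eps_V$ on $V$. Banach--Alaoglu plus a diagonal extraction over a nested countable family $V_1\subset V_2\subset\cdots$ with dense union in $\gH$ produces weak-$\star$ limits $\mu_V$ for each such $V$. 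The family $(\mu_V)_V$ is consistent under orthogonal projection marginalizations, as can be checked directly on the Husimi integrand $|\xi(u/\sqrt\eps)\rangle\langle\xi(u/\sqrt\eps)|$ by Gaussian integration in directions transverse to $V$.

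Next I would use~\eqref{eq:control_N} to extend $(\mu_V)_V$ to a Radon measure on the full space $\gH$. Applied to $b(u)=|P_V u|^{2\kappa}$, first truncated to compact support and then sent to infinity, the anti-Wick/Wick comparison identifies $b^{\rm aW}_\eps$ with $(\eps\cN_V)^\kappa$ plus a polynomial in $\eps\cN_V$ of strictly lower degree, where $\cN_V$ is the number operator on $\gF(V)$. Combined with~\eqref{eq:control_N} this yields the moment bound $\int_V|u|^{2\kappa}\,d\mu_V(u)\leq C$ uniformly in $V$, providing Prokhorov tightness; consistency and the Kolmogorov extension theorem then produce a unique Radon probability $\mu\in\cP(\gH)$ marginalizing to each $\mu_V$, uniquely characterized by these marginals. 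For the second statement, the coherent state identity $c(f)\xi(u/\sqrt\eps)=\eps^{-1/2}\langle f,u\rangle\xi(u/\sqrt\eps)$ and its adjoint yield, for the monomial $b(u)=\prod_i\langle f_i,u\rangle\prod_j\langle u,g_j\rangle$ viewed on a subspace $V$ containing the $f_i,g_j$,
\begin{equation*}
b^{\rm aW}_\eps=\eps^{(k+\ell)/2}\,c(f_1)\cdots c(f_k)\,\cda(g_1)\cdots\cda(g_\ell)\,P_{\gF(V)}
\end{equation*}
in anti-normal order. Rewriting in normal order via~\eqref{eq:CCRagain} produces $\eps^{(k+\ell)/2}\cda(g_1)\cdots\cda(g_\ell)c(f_1)\cdots c(f_k)$ plus corrections with strictly fewer creators/annihilators; each correction carries extra powers of $\eps$ relative to its natural scaling and is $o_\eps(1)$ under~\eqref{eq:control_N}. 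Finally, truncating $b$ to a compactly-supported $b_R$ to apply~\eqref{eq:AW CV}, and removing the cut-off using the uniform moment bound on $\mu$, yields~\eqref{eq:CV DM} for all $k+\ell\leq 2\kappa$.

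The main obstacle is the tightness/extension step: one must track the anti-Wick/Wick commutator corrections precisely enough to produce moment bounds on the cylindrical marginals $\mu_V$ that are uniform in $V$, so that the distribution genuinely extends to a Radon probability on the infinite-dimensional space $\gH$ (rather than leaking mass to infinity). The related passage from $C^0_c$ test functions to unbounded polynomials in the final step relies on the same uniform moment bound and is precisely where the hypothesis $(k+\ell)/2\leq\kappa$ enters.
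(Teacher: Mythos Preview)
The paper does not give its own proof of this statement; it quotes it as \cite[Theorem~4.2]{LewNamRou-14d}, remarking only that the case $k\neq\ell$ follows by adapting the computations of \cite[Lemma~4.2]{LewNamRou-14b}. Your sketch is precisely the strategy of that reference (cylindrical Husimi measures on finite-dimensional $V$, consistency under projection, a uniform moment bound from the number estimate, extension to a Borel probability on $\gH$, then anti-Wick/Wick reordering for the density matrices), so you are aligned with the source the paper defers to.

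One terminological caveat: the uniform bound $\sup_V\int_V|u|^{2}\,d\mu_V\le C$ does not literally give Prokhorov tightness in the norm topology, since balls in $\gH$ are not norm-compact. What it actually buys is that the Kolmogorov extension of the cylindrical family is concentrated on $\gH$ inside the ambient product space (equivalently, tightness for the weak topology, where closed balls are compact). This is exactly the mechanism in \cite{LewNamRou-14d}, so your argument goes through once phrased this way. A second minor point: in your anti-Wick identity for the monomial, the projection $P_{\gF(V)}$ sits between the block of $c$'s and the block of $\cda$'s rather than at the end, but this does not affect the argument once one lets $V$ exhaust $\gH$.
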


Only the case $k=\ell$ of~\eqref{eq:CV DM} is worked out explicitly in~\cite{LewNamRou-14d}. The adaptation to $k\neq \ell$ is however straightforward, only the core calculations from e.g.~\cite[Lemma~4.2]{LewNamRou-14b} have to be adapted mutatis mutandis.
	
\begin{proof}[Proof of Theorem~\ref{thm:deF comp}]\mbox{}\\
\noindent \textbf{Step 1.} Let $C^0_c (\gH)$ denote continuous functions with compact support over $\gH$ and consider the algebra of observables 
\begin{equation*} \cA := \cB (\gh) \otimes C^0_c (\gH).\end{equation*}
Starting from $\Gamma_\eps \in \cS (\gh \otimes \gF (\gH))$ as in the theorem's statement we define a state $\Gammat_\eps\in \left( \cB (\gh) \otimes C^0_c (\gH) \right)^*$ over $\cA$ by testing it against a dense subset of elements of $\cA$. Namely, for any $A \in \cB(\gH)$, any finite-dimensional $V\subset \gH$ and $b\in C^0_c (V)$, we set 
\begin{equation*} \Gammat_\eps (A \otimes b) := \left\langle A \otimes b^{\rm aW}_{\eps} \right\rangle_{\Gamma_\eps}.\end{equation*}
That way $(\Gammat_\eps)_\eps$ is a bounded sequence of positive linear forms over $\cA$ (seen as a Banach space) and therefore it has a weak cluster point $\Gammat_0 \in \cA ^*$. Namely, along a subnet
\begin{equation}\label{eq:CV net 2}
\Gammat_\eps (C) \cvnet  \Gammat_0 (C) \mbox{ for all } C \in \cA.
\end{equation}
We now identify the cluster point, working along the just identified convergent subnet for the rest of the proof. 

For any positive operator $A\in \cB (\gH)$, we can define a (non-normalized) state $\Gamma ^A _\eps$ over $\gF (\gH)$ by setting 
\begin{equation*} 
\left\langle B \right\rangle_{\Gamma^A_\eps} = \left\langle A \otimes B\right\rangle_{\Gamma_\eps}.
\end{equation*}
Applying\footnote{Strictly speaking, we go back to the proof of Item (i) in~\cite{LewNamRou-14d} to identify any cluster point, not only sequential limits. This is done mutatis mutandis using Skorokhod's lemma~\cite{Skorokhod-74}.} Theorem~\ref{thm:deF} to $\Gamma ^A _\eps$ we find that there must exist a positive Borel measure $\mu_A$ on $\gH$ such that, along a further subnet, 
\begin{equation}\label{eq:CV net 3} 
\left\langle A \otimes b_{\eps}^{aW}\right\rangle _{\Gamma_\eps} \to \int_\mathfrak{H} b(u) d\mu_A (u).\end{equation}
As per~\eqref{eq:CV net 2}, $\Gammat_0$ is uniquely identified by
$$  \int_\mathfrak{H} b(u) d\mu_A (u) = \Gammat_0 (A\otimes b)$$
and there remains to further simplify the left-hand side.

Since (the operator norm is used below)  
\begin{equation*} \left\langle A \otimes b_{\eps}^{aW}\right\rangle _{\Gamma_\eps} \leq \left\Vert A \right\Vert \left\langle \1 \otimes b_{\eps}^{aW}\right\rangle _{\Gamma_\eps}\end{equation*}
for any positive function $b$ from a finite-dimensional subspace of $\gH$, we find that  
\begin{equation*} \int_\mathfrak{H} b(u) d\mu_A (u) \leq \left\Vert A \right\Vert \int_\mathfrak{H} b(u) d\mu_\1 (u).\end{equation*}
Picking any $V \subset \gH$ this implies that (approximating the characteristic function of $V$ by a sequence of continuous functions)
\begin{equation*} \mu_\1 (V) = 0 \Rightarrow \mu_A (V) = 0\end{equation*} for any positive
bounded operator $A\in \cB (\gh)$. By Radon-Nykodym's theorem, we deduce that
for any positive bounded $A$, there exists a map $u\mapsto\omega_u (A) \in L^1 (\gH,
d\mu_\1)$ such that
\begin{equation*} 
\int_\mathfrak{H} b(u) d\mu_A (u) = \int_\mathfrak{H} b(u) \omega_u(A) d\mu_\1 (u).
\end{equation*}
Upon redefining $\omega_u(A)$ if necessary we can assume $\mu_\1$ is a probability. From the definition it also follows that $\omega_u(A)$ is $\mu_\1$ almost-surely a bounded linear function of $A$.

Next we can split a general bounded operator in the form 
\begin{equation}\label{eq:split op}
 A = A_{r+} - A_{r-} + \im A_{i+} - \im A_{i-}   
\end{equation}
with four positive operators $A_{r+},A_{r-},A_{i+},A_{i-}$. Applying the above to each term separately we find a $\mu:= \mu_\1 \in \cP(\gH)$ and $u\mapsto \omega_u (.)$ a $L^1 (\gH,d\mu)$ map from $\gH$ to the state-space of $\gh$ such that
\begin{equation*} \left\langle A \otimes b_{\eps}^{aW}\right\rangle _{\Gamma_\eps} \to \int_\mathfrak{H} \omega_u(A) d\mu (u)\end{equation*}
for any $A\in \cB(\gH)$ and any $b\in C^0_c (V)$ with $V$ a finite-dimensional subspace of $\gH$. This is the first statement of the theorem.

\medskip

\noindent \textbf{Step 2.} Under our assumptions, $\sqrt{k! \ell ! \eps ^k \eps^\ell}\Gamma_\eps ^{(k,\ell)} (A)$ is a bounded sequence of trace-class operators for any positive bounded $A$ and $\frac{k+\ell}{2}\leq \kappa$. Hence we may extract a weak-$\star$ convergent subsequence. If $A$ varies in a separabable subspace of the bounded operators (e.g. the span of compact operators and the identity, as in the theorem's statement), we can use a dense countable subset thereof to obtain convergence along a common subsequence for all $A$, modulo a diagonal extraction argument. 

To obtain the second statement of the theorem we further extract a subnet along which Item (1) holds.  There remains to apply~\eqref{eq:CV DM} to each (non-normalized)  state $\Gamma ^A _\eps$ defined above, with $A$ a positive operator, and use the splitting~\eqref{eq:split op} to generalize to all operators in the statement. The measure in~\eqref{eq:CV DM} being the same as that in~\eqref{eq:AW CV}, we identify the limit in~\eqref{eq:CV DM comp} by using~\eqref{eq:AW CV comp}.
\end{proof}

Under more restrictive assumptions we may ensure that $\omega_u (\,.\,)$ is almost surely a normal state, i.e. can be represented by a density matrix:

\begin{corollary}[\textbf{Quantum de Finetti for composite localized states}]
  \label{cor:1}\mbox{}\\
  Suppose that, in addition to the assumptions of Theorem~\ref{thm:deF comp},
  the sequence $\Gamma_{\varepsilon}$ satisfies the bound
  \begin{equation}\label{eq:trapped}
    \left\langle L\otimes \1  \right\rangle_{\Gamma_{\varepsilon}}<+\infty\;,
  \end{equation}
  uniformly in $\varepsilon$, for some positive operator $L$
  on $\mathfrak{h}$ with compact resolvent. Then the $\mu$-measurable map $\omega$ of
  Theorem~\ref{thm:deF comp} takes values in the set of normal states on $\mathfrak{h}$,
  \emph{i.e.} in the set of positive, normalized, trace-class operators.
\end{corollary}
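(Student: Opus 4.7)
The strategy is to transfer the tightness encoded in~\eqref{eq:trapped} from $\Gamma_{\varepsilon}$ down to the fibers $\omega_u$ by testing Theorem~\ref{thm:deF comp} against the spectral projectors of $L$, and then to conclude normality via $\sigma$-additivity of each $\omega_u$ on a single orthonormal resolution of the identity. Since $L\geq 0$ has compact resolvent, its spectral decomposition reads
\begin{equation*}
L=\sum_{j\geq 1}\lambda_j\, |\phi_j\rangle\langle \phi_j|
\end{equation*}
for some orthonormal basis $(\phi_j)_{j\geq 1}$ of $\gh$ with $\lambda_j\nearrow +\infty$, while partial tracing over $\gF(\gH)$ in~\eqref{eq:trapped} produces
\begin{equation*}
\Tr_{\gh}\bigl(L\,\gamma_{\varepsilon}\bigr)=\sum_{j\geq 1}\lambda_j\,\langle\phi_j,\gamma_{\varepsilon}\phi_j\rangle\leq C
\end{equation*}
for the particle reduced density matrix $\gamma_{\varepsilon}=\Tr_{\gF(\gH)}\Gamma_{\varepsilon}$.

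Each rank-one projector $|\phi_j\rangle\langle \phi_j|$ is compact, so Theorem~\ref{thm:deF comp} applied with $k=\ell=0$ and $A=|\phi_j\rangle\langle \phi_j|$ gives, along a common subsequence obtained by a diagonal extraction in $j\geq 1$,
\begin{equation*}
\langle\phi_j,\gamma_{\varepsilon}\phi_j\rangle \underset{\varepsilon\to 0}{\longrightarrow} \int_{\gH}\omega_u\bigl(|\phi_j\rangle\langle \phi_j|\bigr)\,d\mu(u),\qquad \forall\, j\geq 1.
\end{equation*}
Multiplying by $\lambda_j\geq 0$, summing in $j$, and applying Fatou's lemma (with the counting measure in $j$ tensored with $\mu$) then yields
\begin{equation*}
\int_{\gH}\sum_{j\geq 1}\lambda_j\,\omega_u\bigl(|\phi_j\rangle\langle \phi_j|\bigr)\,d\mu(u)\leq \liminf_{\varepsilon\to 0}\Tr_{\gh}\bigl(L\,\gamma_{\varepsilon}\bigr)\leq C,
\end{equation*}
whence $\sum_{j}\lambda_j\,\omega_u(|\phi_j\rangle\langle \phi_j|)<+\infty$ for $\mu$-almost every $u\in\gH$.

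For such $u$, since $\lambda_j\to+\infty$ we obtain the tail bound
\begin{equation*}
\sum_{j>N}\omega_u\bigl(|\phi_j\rangle\langle \phi_j|\bigr)\leq \lambda_{N+1}^{-1}\sum_{j>N}\lambda_j\,\omega_u\bigl(|\phi_j\rangle\langle \phi_j|\bigr)\underset{N\to\infty}{\longrightarrow}0,
\end{equation*}
which, combined with the finite additivity identity
\begin{equation*}
\omega_u(\1)=\sum_{j\leq N}\omega_u\bigl(|\phi_j\rangle\langle \phi_j|\bigr)+\omega_u\Bigl(\1-\sum_{j\leq N}|\phi_j\rangle\langle \phi_j|\Bigr)=1,
\end{equation*}
gives $\sum_{j\geq 1}\omega_u(|\phi_j\rangle\langle \phi_j|)=1$. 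A classical characterization of normal states on $\cB(\gh)$ (namely, that $\sigma$-additivity on some orthonormal resolution of the identity suffices for normality) then produces a positive trace-class operator $\rho_u\in \cL^1(\gh)$ with $\Tr\rho_u=1$ such that $\omega_u(A)=\Tr(\rho_u A)$ for every $A\in\cB(\gh)$, which is precisely the required representation. The only delicate step is the Fatou/diagonal-extraction argument, where the term-by-term convergence of $\langle\phi_j,\gamma_\varepsilon\phi_j\rangle$ must be upgraded to lower semicontinuity of the $\lambda_j$-weighted infinite sum; the rest is bookkeeping and an appeal to the standard criterion of normality.
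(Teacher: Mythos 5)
Your proof is correct, but it takes a genuinely different and arguably more elementary route than the paper's. The paper's proof defines the modified sequence $\Gamma^{(L)}_{\varepsilon}:=(L+1)^{1/2}\Gamma_{\varepsilon}(L+1)^{1/2}$, re-runs the construction of Theorem~\ref{thm:deF comp} with the algebra $\mathcal{L}^{\infty}(\gh)\otimes C^0_c(\gH)$ of \emph{compact}-operator observables in place of $\cB(\gh)\otimes C^0_c(\gH)$, thereby obtaining a normal-state-valued map $\omega^{(L)}$ and an a priori finite (not necessarily probability) measure $\mu^{(L)}$, and then matches the two constructions via the identity $\langle A\otimes b^{\mathrm{aW}}_{\varepsilon}\rangle_{\Gamma_{\varepsilon}}=\langle (L+1)^{-1/2}A(L+1)^{-1/2}\otimes b^{\mathrm{aW}}_{\varepsilon}\rangle_{\Gamma^{(L)}_{\varepsilon}}$, concluding $\mu=\mu^{(L)}$ and $\omega_u(\cdot)=\omega_u^{(L)}((L+1)^{-1/2}\cdot(L+1)^{-1/2})$. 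Your argument bypasses the second run through the de Finetti machinery entirely: you use the discreteness of the spectrum of $L$ (compact resolvent, $\lambda_j\nearrow\infty$), test the $(0,0)$-convergence of Theorem~\ref{thm:deF comp} against the rank-one spectral projectors, apply Fatou/Tonelli to get $\sum_j \lambda_j\,\omega_u(|\phi_j\rangle\langle\phi_j|)<\infty$ for $\mu$-a.e.\ $u$, deduce tightness of the fiber states along the resolution $P_N=\sum_{j\le N}|\phi_j\rangle\langle\phi_j|$, and invoke the criterion that a state $\omega$ on $\cB(\gh)$ with $\omega(\1-P_N)\to0$ for a single orthonormal resolution of the identity is normal. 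This last step, which you correctly flag as the key structural input, is indeed classical: it follows either from the Takesaki decomposition of $\omega$ into normal plus singular parts (the singular part vanishes on compacts, and $\omega_s(\1)=0$ with $\omega_s\ge0$ forces $\omega_s=0$), or directly from Cauchy-Schwarz via $|\omega(A)-\omega(P_N A P_N)|\le 2\|A\|\,\omega(\1-P_N)^{1/2}$ and $\omega(P_N A P_N)=\Tr(\rho P_N A P_N)\to\Tr(\rho A)$. One small simplification: since the footnote in Theorem~\ref{thm:deF comp} already grants convergence for all $A$ in the separable space of compact operators along a single subsequence, the extra diagonal extraction over $j$ is not needed. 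Your version exploits the specific structure of $\cB(\gh)$ more directly, while the paper's version has the advantage of not needing the spectral theorem and of illustrating the flexibility of the anti-Wick construction under a change of observable algebra.
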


This result will not be used in our proof of Theorem~\ref{thm:main}. The a priori absence of a suitable operator $L$ ensuring the validity of~\eqref{eq:trapped} precisely reflects the lack of trapping of the operator acting on the particle.

\begin{proof}
  On the one hand, we know that by Theorem~\ref{thm:deF comp}, there exists a
  measure $\mu$ and a state-valued map $\omega$ such that the expectation of
  anti-Wick observables converges. In addition, as illustrated in the proof
  of the theorem, such convergence identifies the couple $(\mu,\omega)$.

  Now, let us define the family of states
  \begin{equation*}
  \Gamma_{\varepsilon}^{(L)}:=(L+1)^{1/2}\Gamma_{\varepsilon}(L+1)^{1/2}.
  \end{equation*}
  For $\Gamma_{\varepsilon}^{(L)}$ we proceed as in the proof of Theorem~\ref{thm:deF comp}, substituting the
  algebra of observables $\mathcal{A}$ with
  \begin{equation*}
    \mathcal{K}:= \mathcal{L}^{\infty}(\mathfrak{h})\otimes C_c^0(\mathfrak{H})\;,
  \end{equation*}
  where $\mathcal{L}^{\infty}(\mathfrak{h})$ is the space of compact operators. Thanks to this
  modification, we can identify a limit measure $\mu^{(L)}$, and a
  $\mu^{(L)}$-measurable map
  \begin{equation*} \omega^{(L)} : \begin{cases}
             \gH \to \mathcal{L}^1_{+,1} (\gh)\\
             u \mapsto \omega^{(L)}_u
            \end{cases}
            \end{equation*}
            where $\mathcal{L}^1_{+,1} (\gh)$ is the set of normal states on $\mathfrak{h}$, dual
            to the set of compact operators. The drawback is that in this
            case $\mu^{(L)}$ \emph{could fail to be a probability measure}. Let us
            remark that one could identify different limit measures along
            different subsubnets of the one used to obtain $(\mu,\omega)$ from
            $\Gamma_{\varepsilon}$.

            Now, let us fix $A\in \mathcal{B}(\mathfrak{h})$, $V\subset \mathfrak{H}$ finite dimensional, and $b\in
            C^0_c(V)$. On the one hand, by Theorem~\ref{thm:deF comp}, along the
            subsubnet
            \begin{equation*}
              \langle  A\otimes b^{\mathrm{aW}}_{\varepsilon}  \rangle_{\Gamma_\varepsilon}\to \int_{ \mathfrak{H}}^{}\omega_u(A)b(u)  d\mu(u)\;,
            \end{equation*}
            and on the other hand,
            \begin{equation*}
              \langle  (L+1)^{-1/2}A(L+1)^{-1/2}\otimes b^{\mathrm{aW}}_{\varepsilon}  \rangle_{\Gamma^{(L)}_\varepsilon}\to \int_{ \mathfrak{H}}^{}\omega_u^{(L)}\bigl((L+1)^{-1/2}A(L+1)^{-1/2}\bigr)b(u) d\mu^{(L)}(u)\;,
            \end{equation*}
            since $(L+1)^{-1/2}A(L+1)^{-1/2}\in \mathcal{L}^{\infty}(\mathfrak{h})$ for any $A\in
            \mathcal{B}(\mathfrak{h})$. However,
            \begin{equation*}
              \langle  A\otimes b^{\mathrm{aW}}_{\varepsilon}  \rangle_{\Gamma_\varepsilon}=\langle  (L+1)^{-1/2}A(L+1)^{-1/2}\otimes b^{\mathrm{aW}}_{\varepsilon}  \rangle_{\Gamma^{(L)}_\varepsilon}
            \end{equation*}
            by  definition of $\Gamma_{\varepsilon}^{(L)}$, and thus
            \begin{equation*}
              \int_{ \mathfrak{H}}^{}\omega_u(A)b(u)  d\mu(u)=\int_{ \mathfrak{H}}^{}\omega_u^{(L)}\bigl((L+1)^{-1/2}A(L+1)^{-1/2}\bigr)b(u) d\mu^{(L)}(u).
            \end{equation*}
            Fixing a bounded $A$ and varying $b$ implies that 
            $$\omega_u(A)  d\mu(u) = \omega_u^{(L)}\bigl((L+1)^{-1/2}A(L+1)^{-1/2}\bigr)d\mu^{(L)}(u).$$
            Hence in particular with $A= \1$ we find $\mu= \mu_L$. Also, $\mu$-almost surely, 
            \begin{equation*}\omega_u(\,\cdot\,)=\omega^{(L)}_u\bigl((L+1)^{-1/2}\,\cdot \,(L+1)^{-1/2}\bigr).\end{equation*}
            The limit is the same along any subsubnet, and thus it holds on the original
            subnet as well. Therefore, it follows that $\omega_u\in \mathcal{L}^1_{+,1}(\mathfrak{h})$.
  \end{proof}

  \section{Convergence of the energy}\label{sec:energy}

For completeness we now revisit the proof of 

\begin{theorem}[\textbf{Energy convergence}]\label{thm:ener}\mbox{}\\
With the assumptions and notation of Section~\ref{sec:main}, we have that
\begin{equation*} E_{(\alpha)} ^{(V)} \underset{\alpha \to \infty}{\to} E^{(V)}_\mathrm{Pek}(1).\end{equation*}
In particular this holds true with the external potential $V\equiv 0$.
\end{theorem}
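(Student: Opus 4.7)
The plan is to establish matching upper and lower bounds on $E_\alpha^{(V)}$. The upper bound is obtained from a product trial state: picking a Pekar minimizer $\psi_{V,1} \in \cM_\mathrm{Pek}^{(V)}(1)$, the optimal classical field $u_{\psi_{V,1}} = -|\psi_{V,1}|^2 \ast v$ from~\eqref{eq:fieldconfig}, and the coherent product $\Psi = \psi_{V,1} \otimes \xi(u_{\psi_{V,1}})$, the computation~\eqref{eq:qc_energy} at $u = u_{\psi_{V,1}}$ together with the square completion leading to~\eqref{eq:pekar_functional} yield $\langle \Psi, H_\alpha^{(V)} \Psi \rangle = \mathcal{E}_\mathrm{Pek}^{(V)}(\psi_{V,1}) = E_\mathrm{Pek}^{(V)}(1)$ for every $\alpha$. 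Hence $\limsup_{\alpha \to \infty} E_\alpha^{(V)} \leq E_\mathrm{Pek}^{(V)}(1)$.

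For the lower bound, let $(\Psi_\alpha)$ be quasi-minimizers as in~\eqref{eq:minimizing_sequence}. A square completion of the field part, yielding $\tfrac12 \1\otimes \cN_\alpha + \int v(x-y)(a^\dagger_y + a_y)dy \geq -2\|v\|_{L^2}^2$, combined with the upper bound and $V \in L^\infty$, gives uniform control of both $\langle\1\otimes\cN_\alpha\rangle_{\Psi_\alpha}$ and the particle kinetic energy. This is the hypothesis of Theorem~\ref{thm:deF comp} with $\varepsilon = \alpha^{-2}$ and $\kappa = 1$, so modulo a subnet we obtain a probability measure $\mu \in \cP(L^2_{\mathrm{field}}(\R^d))$ and a $\mu$-measurable state-valued map $u \mapsto \omega_u$ on $L^2_{\mathrm{part}}(\R^d)$. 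Fatou applied to~\eqref{eq:CV DM comp} with $k=\ell=1$ controls the field excitation number, and Corollary~\ref{cor:deFint} handles the interaction. The genuinely delicate term is the particle piece $\Tr((-\Delta + V)\gamma_\alpha)$, because $-\Delta + V$ has no compact resolvent and so $\omega_u$ is a priori only an abstract state, not a density matrix.

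The remedy is Proposition~\ref{prop:localization}, which gives
\[
	\liminf_\alpha \Tr\bigl(H_\alpha^{(V)} \Gamma_\alpha\bigr) \;\geq\; \liminf_R \liminf_\alpha \Bigl[ \Tr\bigl( H_\alpha^{(V)} \Gamma_{\alpha,\chi_R}\bigr) + \Tr\bigl( H_\alpha^{(0)} \Gamma_{\alpha,\eta_R}\bigr) \Bigr].
\]
On $\Gamma_{\alpha,\chi_R}$ the particle reduced density $\chi_R\gamma_\alpha\chi_R$ is supported in $B_{2R}$, so Corollary~\ref{cor:1} applied with $L = -\Delta + W_R$ for $W_R$ a confining potential vanishing on $B_{2R}$ ensures that the corresponding $\omega_u^{\chi_R}$ is a genuine normal state. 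Applying Theorem~\ref{thm:deF comp} and Corollary~\ref{cor:deFint} to each localized state, and a spectral decomposition $\omega_u^{\chi_R} = \sum_i p_{u,i} |\psi_{u,i}\rangle\langle \psi_{u,i}|$ to pass from the mixed state to pure-state Pekar functionals (using linearity of the kinetic/potential/field parts and the variational definition of $\mathcal{E}_\mathrm{Pek}^{(V)}$), one obtains for each $R$ a bound of the form $\liminf_\alpha \Tr(H_\alpha^{(V)}\Gamma_{\alpha,\chi_R}) \geq \int E_\mathrm{Pek}^{(V)}(\|\psi_u^{\chi_R}\|^2)\,d\mu(u)$, and similarly with $V \mapsto 0$ for the $\eta_R$-piece. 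Together with the mass conservation~\eqref{eq:localized_traces} and the binding inequality $E_\mathrm{Pek}^{(V)}(m_1) + E_\mathrm{Pek}^{(0)}(m_2) \geq E_\mathrm{Pek}^{(V)}(m_1 + m_2)$ (which holds when $V\leq 0$ by a far-translation trial state on a minimizer of $E_\mathrm{Pek}^{(V)}(m_1)$), one concludes $\liminf_\alpha E_\alpha^{(V)} \geq E_\mathrm{Pek}^{(V)}(1)$ after sending $R \to \infty$.

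The main obstacle is precisely the lack of trapping for $-\Delta + V$: the abstract de Finetti output $\omega_u$ is not representable as a density matrix in the absence of a compact-resolvent lower bound on the particle Hamiltonian, which prevents a direct integration of the particle energy against $\mu$. The interplay of Proposition~\ref{prop:localization} and Corollary~\ref{cor:1} is what circumvents this, by confining the particle to a compact region on which a suitable compact-resolvent operator exists; the complementary piece is then handled by the translation-invariant Pekar problem with $V \equiv 0$, and binding closes the estimate.
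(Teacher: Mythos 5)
Your upper bound (coherent product trial state) is the same as the paper's. For the lower bound you take a genuinely different route. The paper applies Theorem~\ref{thm:deF comp} directly to $\Gamma_\alpha$, accepts that the resulting $\omega_u$ are only abstract states, and resolves the lack of normality not by localization but by extending the Pekar functional to abstract states: it introduces $E_\mathrm{Gen}^{(V)}$ and shows, via a Goldstine/duality argument together with the mixed-state Lemma~\ref{lem:mixed pek}, that $E_\mathrm{Gen}^{(V)}=E_\mathrm{Pek}^{(V)}(1)$ (Lemma~\ref{lem:gen ener}). Neither Proposition~\ref{prop:localization} nor Corollary~\ref{cor:1} is used in the paper's proof of Theorem~\ref{thm:ener}; they only enter for Theorem~\ref{thm:main}. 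Your alternative — localize first so that on the $\chi_R$-piece there exists a confining $L=-\Delta+W_R$ with compact resolvent and Corollary~\ref{cor:1} produces normal $\omega_u^{\chi_R}$ — is a legitimate idea that does circumvent the non-normality obstacle you correctly identify, but its execution has two gaps.

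First, ``spectral decomposition plus linearity'' does not reduce a mixed-state Pekar energy to pure-state Pekar energies: the interaction $\iint W(x-y)\rho_\gamma(x)\rho_\gamma(y)\,dx\,dy$ is quadratic in $\gamma$, and even before the $u$-minimization the single classical field $u$ is shared between all $\psi_{u,i}$ and cannot be optimized separately for each of them. The correct device is exactly the paper's Lemma~\ref{lem:mixed pek}: the mixed Pekar problem over trace-one density matrices is minimized by a rank-one projection, hence equals $E_\mathrm{Pek}^{(V)}(1)$. Second, the mass bookkeeping is off. Corollary~\ref{cor:1} (and Theorem~\ref{thm:deF comp}) apply to a \emph{normalized} state; once $\Gamma_{\alpha,\chi_R}$ is normalized, the $\omega_u^{\chi_R}$ have trace one $\mu_R$-a.e., so the quantities $\|\psi_u^{\chi_R}\|^2$ in your bound are all equal to $1$ and the pointwise inequality $E_\mathrm{Pek}^{(V)}(m_1)+E_\mathrm{Pek}^{(0)}(m_2)\geq E_\mathrm{Pek}^{(V)}(m_1+m_2)$ has no variable mass to act on. The lost mass is carried by the scalar $\lambda_{\alpha,R}=\Tr\Gamma_{\alpha,\chi_R}$, with $\lambda_{\alpha,R}+\Tr\Gamma_{\alpha,\eta_R}=1$ by~\eqref{eq:localized_traces}, and the closure needs no subadditivity at all: one obtains $\liminf_\alpha\Tr(H_\alpha^{(V)}\Gamma_\alpha)\geq\lambda_R E_\mathrm{Pek}^{(V)}(1)+(1-\lambda_R)E_\mathrm{Pek}^{(0)}(1)\geq E_\mathrm{Pek}^{(V)}(1)$ simply because $E_\mathrm{Pek}^{(V)}(1)\leq E_\mathrm{Pek}^{(0)}(1)$. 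With these two corrections your scheme closes, but the paper's generalized-state route reaches the same conclusion more economically and without importing the localization estimates of Section~\ref{sect:loc} into the energy lower bound.
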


Our proof is in the spirit of~\cite{CorFalOli-20}, but we use quasi-classical measures as constructed in the previous section, leading to mild simplifications. In view of Theorem~\ref{thm:deF comp}, the natural limit energy takes general abstract states as arguments. We discuss this first in a subsection, and prove that this does not lower the energy as compared to what was defined in Section~\ref{sec:main}. We will complete the proof of Theorem~\ref{thm:ener} in a second subsection.

\subsection{Generalized Pekar energies}

Let 
\begin{equation*}h:=-\Delta + V\end{equation*}
and $W$ be as in~\eqref{eq:eff pot}, and identified with the multiplication operator by $W(x-y)$ on the two-particle space $L^2_{\rm part} \otimes L^2_{\rm part}$. 

We start this discussion by generalizing Pekar's energy functional to take mixed states as arguments:

\begin{lemma}[\textbf{Mixed Pekar functional}]\label{lem:mixed pek}\mbox{}\\
Any minimizer of   
 \begin{equation*}
 \gamma \mapsto \Tr\left[ h \gamma \right] + \Tr \left[ W(x-y) \gamma \otimes \gamma \right]
 \end{equation*}
 amongst positive trace-class operators of trace $1$ must be rank one. Hence any minimizer is of the form $\gamma = |\psi \rangle \langle \psi|$ with $\psi$ a minimizer for~\eqref{eq:pekar_energy} with $m=1$.
\end{lemma}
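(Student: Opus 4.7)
The strategy is to use the Hoffmann--Ostenhof inequality to reduce an arbitrary trace-one density matrix $\gamma$ to the pure state carrying the square root of its density, and then to read off the equality cases. I interpret the sign of the interaction in the statement so that the functional matches the Pekar convention~\eqref{eq:pekar_functional}, i.e.\ with a minus sign in front of the $W$-term.

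First I would observe that both the $V$- and $W$-terms depend on $\gamma$ only through its diagonal density $\rho_\gamma(x):=\gamma(x,x)$:
\begin{equation*}
\Tr[V\gamma]=\int_{\R^d}V(x)\rho_\gamma(x)\,dx,\qquad \Tr\bigl[W(x-y)\gamma\otimes\gamma\bigr]=\iint_{\R^{2d}}W(x-y)\rho_\gamma(x)\rho_\gamma(y)\,dxdy.
\end{equation*}
Only the kinetic term $\Tr[-\Delta\gamma]$ genuinely reflects the operator structure of $\gamma$. For that term I would invoke the Hoffmann--Ostenhof inequality
\begin{equation*}
\Tr[-\Delta\gamma]\geq\int_{\R^d}\bigl|\nabla\sqrt{\rho_\gamma}\bigr|^2\,dx,
\end{equation*}
which follows from the spectral decomposition $\gamma=\sum_j\lambda_j|\psi_j\rangle\langle\psi_j|$ combined with the pointwise Cauchy--Schwarz bound
\begin{equation*}
|\nabla\rho_\gamma|^2=\Bigl|2\Re\sum_j\lambda_j\overline{\psi_j}\nabla\psi_j\Bigr|^2\leq 4\rho_\gamma\sum_j\lambda_j|\nabla\psi_j|^2
\end{equation*}
and the identity $|\nabla\sqrt{\rho_\gamma}|^2=|\nabla\rho_\gamma|^2/(4\rho_\gamma)$.

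Setting $\psi_\gamma:=\sqrt{\rho_\gamma}$, which satisfies $\|\psi_\gamma\|_{L^2}^2=\Tr\gamma=1$, these two ingredients combine into
\begin{equation*}
\Tr[h\gamma]-\iint_{\R^{2d}}W(x-y)\rho_\gamma(x)\rho_\gamma(y)\,dxdy\geq\cE_{\mathrm{Pek}}^{(V)}(\psi_\gamma)\geq E_{\mathrm{Pek}}^{(V)}(1),
\end{equation*}
so the infima of the mixed and pure functionals coincide. If now $\gamma$ achieves this infimum, equality must hold throughout, and in particular equality in Hoffmann--Ostenhof is forced. Saturation of the pointwise Cauchy--Schwarz estimate above forces $\nabla\psi_j(x)=f(x)\psi_j(x)$ for all $j$ with $\lambda_j>0$ and a common vector field $f$; integrating this gives $\psi_j=C_j\psi_0$ for a single profile $\psi_0$, and the $L^2$-orthonormality of the $\psi_j$ then leaves exactly one nonzero index. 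Hence $\gamma=|\psi_\gamma\rangle\langle\psi_\gamma|$ is rank one, and the chain of equalities also ensures $\psi_\gamma\in \cM_{\mathrm{Pek}}^{(V)}(1)$.

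The main obstacle I expect is the careful treatment of the Hoffmann--Ostenhof equality case for complex-valued eigenfunctions, because the real part in $\nabla\rho_\gamma=2\Re\sum_j\lambda_j\overline{\psi_j}\nabla\psi_j$ introduces one additional Cauchy--Schwarz step to saturate, and one has to argue that $\rho_\gamma>0$ almost everywhere on the support where the pointwise relation $\nabla\psi_j=f\psi_j$ is used. Both issues are classical but deserve to be spelled out. Everything else reduces to routine algebra.
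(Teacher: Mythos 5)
Your proof is correct, but it follows a genuinely different route from the paper. The paper proves the lemma by a first-order perturbation argument: it assumes a minimizer $\gamma_0$ exists (by concentration-compactness), tests against competitors $(1-\eps)\gamma_0 + \eps\sigma$, deduces that $\gamma_0$ must minimize the \emph{linearized} one-body energy $\gamma \mapsto \Tr\left[ \gamma\left( h + W*\rho_{\gamma_0}\right)\right]$, and then invokes the non-degeneracy of the ground state of the mean-field Schr\"odinger operator $h + W*\rho_{\gamma_0}$ (Perron--Frobenius type argument) to conclude that the range of $\gamma_0$ is one-dimensional. Your approach instead uses the Hoffmann--Ostenhof inequality to compare the mixed functional with the pure Pekar functional evaluated at $\sqrt{\rho_\gamma}$, and then extracts rank-one-ness from the equality case of the two Cauchy--Schwarz steps. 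Both routes are standard in the Hartree/Pekar literature and both require some regularity input: yours needs $\rho_{\gamma_0}>0$ a.e.\ (which follows since $\sqrt{\rho_{\gamma_0}}$ is a Pekar minimizer, hence positive by the strong maximum principle) to run the equality-case argument; the paper's needs non-degeneracy of a Schr\"odinger ground state, which is equally classical. Your method is arguably more elementary, avoids the variational equation, and as a bonus directly delivers the coincidence of the pure and mixed infima, whereas the paper's linearization is slightly more robust against nodal-set subtleties since it does not require tracking the sign and phase of the eigenfunctions. You correctly flagged the extra $\left| \Re z\right| \leq |z|$ saturation step needed for complex eigenfunctions (forcing $f$ to be real on the support of $\rho_{\gamma_0}$), and the positivity of $\rho_{\gamma_0}$; spelling those out, e.g.\ by noting that each $\psi_j\overline{\psi_k}/\rho_{\gamma_0}$ has vanishing gradient and hence is constant, with orthogonality then forcing only one nonzero weight, would close the argument cleanly. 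You also correctly noted the sign mismatch in the statement relative to~\eqref{eq:pekar_functional}, which is present in the paper's own text.
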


Similar arguments may be found e.g. in~\cite[Section~5]{Seiringer-02} or~\cite[Section~2]{BauSei-01}.

\begin{proof}
The existence of minimizers follows by a concentration-compactness argument similar to that leading to the existence for~\eqref{eq:pekar_energy}. We skip details and denote $\gamma_0$ a minimizer. 

Consider a variation 
\begin{equation*} 
\gamma = (1- \eps)\gamma_0 + \eps \sigma
\end{equation*}
with $0<\eps <1$ and $\sigma$ a positive trace-class operator of trace $1$. We must have 
$$ \Tr\left[ h \gamma_0 \right] + \Tr \left[ W(x-y) \gamma_0 \otimes \gamma_0 \right] \leq \Tr\left[ h \sigma \right] + \Tr \left[ W(x-y) \sigma \otimes \sigma \right].$$
Note that 
$$ \Tr \left[ W(x-y) \sigma \otimes \sigma \right] = \iint_{\R^d \times  \R^d} W(x-y) \sigma (x,x) \sigma (y,y)dxdy.$$
Expanding and taking $\eps$ small enough we find that necessarily (keeping only the $O(\eps)$ term in the expansion)
\begin{equation*} 
\Tr\left[  \gamma_0 \left( h + W\ast \rho_{\gamma_0}\right) \right] \leq \Tr\left[  \sigma \left( h + W\ast \rho_{\gamma_0}\right) \right]\end{equation*}
where $\rho_{\gamma_0} (x) = \gamma_0 (x,x)$ is the density of $\gamma_0$.

Hence $\gamma_0$ must also minimize the linearized 
\begin{equation*} \gamma \mapsto \Tr\left[  \gamma \left( h + W\ast \rho_{\gamma_0}\right) \right],\end{equation*}
which in particular shows that the Schr\"odinger operator $h + W\ast \rho_{\gamma_0}$ has at least a ground energy state. Then, $\gamma$ must have its image in the ground energy space of $h + W\ast \rho_{\gamma_0}$, but the latter has dimension one by well-known arguments (see e.g.~\cite[Theorem~2.3]{Rougerie-EMS} or~\cite[Section~XIII.12]{ReeSim4}). 
\end{proof}

We now turn to a functional taking generalized states as arguments. For an abstract state $\omega$ on  $L^2_{\rm part} (\R^d)$  (a positive linear functional over bounded operators acting on $L^2_{\rm part} (\R^d)$) let, in analogy with~\eqref{eq:pekar_functional},  
\begin{equation*} 
\mathcal{E}_\mathrm{Pek}^{(V)}(\omega) := \omega (h) + \omega \otimes \omega \left(W(x-y)\right) = \omega \otimes \omega \left( \frac{h_x + h_y}{2} + W (x-y)\right) 
\end{equation*}
and ($\mathrm{Gen}$ for generalized)
\begin{equation}\label{eq:gen ener}
 E_\mathrm{Gen}^{(V)}:= \inf\left\lbrace \mathcal{E}_\mathrm{Pek}^{(V)}(\omega), \, \omega \in \cS \left( L^2 (\R^d)\right) \mbox{ as defined in~\eqref{eq:states}}\right\rbrace.
\end{equation}
Implicit in the above is the fact that the minimization is performed under the constraint that 
$$ A \mapsto \omega \left( h^{1/2} A h^{1/2} \right)$$
is a bounded linear map over bounded operators $A$ acting on $L^2_{\rm part} (\R^d),$ so that $\omega (h)$ makes sense (we use that $h$ is a non-negative operator here). Under our assumptions one easily proves that
\begin{equation}\label{eq:two part op} 
 H_2:= \frac{h_x + h_y}{2} + W (x-y) \geq - C  
\end{equation}
for some constant $C$, and hence the infimum above is well-defined. We have the 

\begin{lemma}[\textbf{Generalized energy = Pekar energy}]\label{lem:gen ener}\mbox{}\\
 With the previous definitions
 \begin{equation*} E_\mathrm{Gen}^{(V)} = E^{(V)}_\mathrm{Pek}(1).\end{equation*}
\end{lemma}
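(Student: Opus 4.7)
The upper bound $E_\mathrm{Gen}^{(V)} \leq E_\mathrm{Pek}^{(V)}(1)$ is immediate: any Pekar minimizer $\psi \in \cM_\mathrm{Pek}^{(V)}(1)$ defines a pure normal state $\omega_\psi(A) := \langle \psi, A\psi\rangle \in \cS(L^2(\R^d))$ which lies in the admissible class (since $\langle \psi, h\psi \rangle < \infty$) and satisfies $\cE_\mathrm{Pek}^{(V)}(\omega_\psi) = \cE_\mathrm{Pek}^{(V)}(\psi) = E_\mathrm{Pek}^{(V)}(1)$.

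For the reverse inequality, my plan is to take an arbitrary admissible $\omega \in \cS(L^2(\R^d))$ and produce the lower bound $\cE_\mathrm{Pek}^{(V)}(\omega) \geq E_\mathrm{Pek}^{(V)}(1)$ through a binding-type argument. I would apply the Yosida--Hewitt decomposition to write $\omega = \omega_n + \omega_s$ with $\omega_n$ normal (represented by a positive trace-class $\gamma$ with $\Tr\gamma = m \in [0,1]$) and $\omega_s$ purely singular (vanishing on compact operators), of total mass $1-m$. Exploiting the Fourier representation
\[ W(x-y) = (2\pi)^{-d}\int \widehat{W}(k)\, e^{ik\cdot x}\otimes e^{-ik\cdot y}\,dk, \qquad \widehat{W}(k) = |\widehat{v}(k)|^2 \geq 0, \]
the two-body term decomposes as a sum of normal--normal, cross, and singular--singular contributions (the latter two being nonnegative since $\widehat{W}\geq 0$). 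The normal--normal piece $\Tr[h\gamma] + \Tr[W(x-y)\,\gamma\otimes\gamma]$ is bounded below by $E_\mathrm{Pek}^{(V)}(m)$ via Lemma~\ref{lem:mixed pek} at mass $m$, since the lemma shows that mixed minimizers are automatically pure. For the singular piece, the heuristic is that $\omega_s$ represents mass sent to spatial infinity: since $V \in L^\infty$ with $V(x) \to 0$ at infinity, one expects $\omega_s(V) = 0$, so the singular contribution reduces to a purely kinetic plus translation-invariant interaction expression which should be bounded below by $E_\mathrm{Pek}^{(0)}(1-m)$. The cross term is controlled by Cauchy--Schwarz on the Fourier integral. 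The proof then concludes via the binding inequality $E_\mathrm{Pek}^{(V)}(m) + E_\mathrm{Pek}^{(0)}(1-m) \geq E_\mathrm{Pek}^{(V)}(1)$, valid for all $m \in [0,1]$, which is obtained by using a $V$-Pekar minimizer of mass $m$ together with a far-translated $V=0$ Pekar minimizer of mass $1-m$ as a trial state at total mass $1$, letting the separation tend to infinity so that the cross interaction and the translated $V$-contribution both vanish.

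The main obstacle I anticipate is the rigorous treatment of the singular part $\omega_s$. Concretely, one needs (i) $\omega_s(V) = 0$, which requires combining the decay of $V$ at infinity with the finite kinetic energy bound built into the definition of the generalized functional; a cutoff argument approximating $V$ by truncations, together with the fact that $V(-\Delta + 1)^{-1}$ is a compact operator and that the energy bound forces concentration on low-$|k|$ Fourier modes, should suffice to deduce that $\omega_s$ does not see $V$; and (ii) the singular lower bound $\omega_s(-\Delta) + \omega_s \otimes \omega_s(W) \geq E_\mathrm{Pek}^{(0)}(1-m)$ at the level of abstract singular states, which I would approach either by first establishing the lemma in the translation-invariant case $V=0$ (where the absence of a preferred origin simplifies matters and the bound can be obtained by a scaling/spreading argument) and then invoking it, or by adapting the energy-localization framework of Proposition~\ref{prop:localization} to the abstract-state setting. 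Once these two ingredients are granted, the binding inequality collapses the $m$-dependent lower bound to $E_\mathrm{Pek}^{(V)}(1)$, which is attained precisely at $m=1$, yielding the claim.
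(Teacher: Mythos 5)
Your upper bound and the identification of the three ingredients (Yosida--Hewitt decomposition, binding inequality, $\omega_s(V)=0$) are a genuinely different route from the paper, which proceeds instead via Goldstine's theorem: the unit ball of the trace class is weak-$\star$ dense in the unit ball of $\mathcal{B}(\mathfrak{h})^{*}$, so any admissible abstract state $\omega$ can be approximated along a net by trace-class $\gamma_n$; conjugating by $h^{1/2}$ (and by $\sqrt{H_2+C}$ for the two-body piece) then makes this approximation compatible with the unbounded energy, so that $\mathcal{E}_\mathrm{Pek}^{(V)}(\gamma_n)\to\mathcal{E}_\mathrm{Pek}^{(V)}(\omega)$, and one concludes with Lemma~\ref{lem:mixed pek}. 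That argument is uniform in $V$ and never separates a singular piece.

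Your proposal, however, has a gap at exactly its central step. The singular lower bound
\[\omega_s(-\Delta)+\omega_s\otimes\omega_s\bigl(W(x-y)\bigr)\ge E_\mathrm{Pek}^{(0)}(1-m)\]
\emph{is} the statement of the present lemma with $V=0$ at mass $1-m$, applied to an abstract (indeed purely singular) state: you would still need to show that no abstract state beats the translation-invariant Pekar energy. Nothing in the passage to $V=0$ removes the difficulty --- the singular state is exactly as hard to handle there, and ``scaling/spreading'' covers only special singular states arising as limits of dilations or translations. So as written the plan is circular: whatever method establishes (ii) for $V=0$ (for the paper, Goldstine) would prove the full lemma directly without the decomposition. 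Two further points deserve attention. First, the claim $\omega_s(V)=0$ is provable, but not by compactness of $V$ alone, since $V$ is merely bounded and not compact; one has to exploit the finite kinetic energy through a frequency cutoff (e.g.\ $\omega_s(\mathbbm{1}-P_R)\lesssim R^{-2}\,\omega_s(-\Delta)$ for $P_R$ a Fourier projection, then use that $P_R V P_R$ is compact). Second, the cross term $2\,\mathrm{Re}\,\omega_n\otimes\omega_s(W)$ does not simply drop out: Cauchy--Schwarz on $\widehat{W}\ge 0$ gives
\[\bigl|\omega_n\otimes\omega_s(W)\bigr|\le\bigl(\omega_n\otimes\omega_n(W)\bigr)^{1/2}\bigl(\omega_s\otimes\omega_s(W)\bigr)^{1/2},\]
which, after inserting the $-$ sign of the attractive interaction, cannot be absorbed into the two diagonal brackets without changing their coupling constants; one would instead have to argue that the cross term actually vanishes, which requires a finer analysis of $\omega_s$ than your outline provides.
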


\begin{proof}
In view of Assumption~\ref{assum:V}, we may for this proof assume without loss that $h\geq 0$ as an operator.

Denote $\gh = L^2 (\R^d)$ for brevity. We have~\cite[Chapter~4]{Schatten-60} that the dual of bounded operators $\left(\cB (\gh)\right)^*$ is the bidual of the trace-class $\cL^1 (\gh)$. Hence, by Goldstine's theorem\footnote{If $X$ is a Banach space, its unit ball is dense in that of the bidual $X^{**}$ for the weak-$\star$ topology, see e.g.~\cite[Exercise~1 on page 128]{Rudin2}.}, for any abstract state $\omega$ there exists a net of positive trace-class operators $\gamma_n$ such that 
\begin{equation}\label{eq:density}
 \Tr \left[ \gamma_n B \right] \cvnet \omega (B) 
\end{equation}
for any bounded operator $B$. The rest of the proof is then aking to that of~\cite[Proposition~2.8]{CorFalOli-20}.

For a state $\omega$ with $\mathcal{E}_\mathrm{Gen}^{(V)}(\omega)<\infty$ it follows from~\eqref{eq:two part op} that $\omega (h) < \infty$ and $ \omega \otimes \omega \left( H_2 \right) < \infty.$
Hence 
\begin{equation*} \omega ^h (B) := \omega \left( h^{1/2} B h^{1/2}\right)\end{equation*}
defines a positive linear functional on $\cB (\gh)$ as well, to which we may apply the above, obtaining a net of trace-class operators $\gamma_n^h$ such that 
\begin{equation}\label{eq:density 2}
 \Tr \left[ \gamma_n^h B \right] \cvnet \omega^h (B). 
\end{equation}
Applying~\eqref{eq:density} directly to $\omega$ yields another net $\gamma_n$, but testing~\eqref{eq:density 2} with $B$ of the form $h^{-1/2} \widetilde{B} h^{-1/2}$ for a bounded $\widetilde{B}$ shows that one can take
\begin{equation*} \gamma_n = h^{-1/2} \gamma_n ^h h ^{-1/2}.\end{equation*}
Similarly 
\begin{equation*} \omega_2 (B_2) := \omega \otimes \omega \left(  \sqrt{H_2 +C} B_2 \sqrt{H_2 +C} \right)\end{equation*}
defines a positive linear functional on bounded operators $B_2$ on $\gh^{\otimes 2}$, where $C$ is a constant such that $H_2 + C \geq 0$. We deduce that $\omega_2$ is the limit of a net of trace-class operators that we may identify to
\begin{equation*} \left( h^{-1/2} \gamma_n ^h h ^{-1/2}\right) \otimes \left( h^{-1/2} \gamma_n ^h h ^{-1/2}\right) \end{equation*}
as above. Using~\eqref{eq:density 2} and the fact that $W$ is a bounded mutliplication operator, we conclude that for any state $\omega$ with $\mathcal{E}_\mathrm{Gen}^{(V)}(\omega)<\infty$, there exists a net $(\gamma_n)$ of trace-class operators such that 
\begin{equation*}\mathcal{E}_\mathrm{Pek}^{(V)}(\gamma_n) \cvnet \mathcal{E}_\mathrm{Pek}^{(V)}(\omega).\end{equation*}
This leads to
\begin{equation}\label{eq:gen ener 2}
 E_\mathrm{Gen}^{(V)}\geq \inf\left\lbrace \mathcal{E}_\mathrm{Pek}^{(V)}(\gamma), \gamma \in \cL^1(\gh), \gamma \geq 0, \Tr \gamma = 1\right\rbrace.
\end{equation}
The opposite inequality follows from the variational principle. The right-hand side of the above is the Pekar energy~\eqref{eq:pekar_functional} generalized to a mixed state 
\begin{equation*} \gamma = \sum_{j\geq 1} \lambda_j |u_j\rangle \langle u_j|\end{equation*}
with $\lambda_j \geq 0, \sum_j \lambda_j = 1$ and an orthonormal basis $(u_j)_j$ of $\gh$. 
We hence conclude from Lemma~\ref{lem:mixed pek} that   
 \begin{equation*} E_\mathrm{Gen}^{(V)} = E^{(V)}_\mathrm{Pek}(1)\end{equation*}
 as desired.
\end{proof}

\subsection{Proof of Theorem~\ref{thm:ener}}

Again, without loss of generality (i.e. adding a constant if needed), we
assume that $h\geq 0$. We consider a sequence of quasi-minimizers as
in~\eqref{eq:minimizing_sequence}. Under our assumptions, applying the
Cauchy-Schwarz inequality to the interaction term immediately leads to the a
priori bound
\begin{equation}\label{eq:apriori}
\left\langle\Psi_\alpha| \left( - \Delta + V \right) \otimes \1 + \1 \otimes \cN_\alpha | \Psi_\alpha \right\rangle \leq C 
\end{equation}
independently of $\alpha$. Here $\cN_\alpha$ is the scaled particle number~\eqref{eq:scaled number}. We apply Theorem~\ref{thm:deF comp} with $\kappa = 1, \eps = \alpha^{-2}$, obtaining a probability measure $\mu$ over $L^2 (\R^d)$ and a $\mu$-measurable map $\omega_u$ from $L^2 (\R^d)$ to the state-space of $L^2 (\R^d)$. Combining with Corollary~\ref{cor:deFint} we may pass to the limit in the interaction term. For the field energy, we pass to the liminf using~\eqref{eq:CV DM comp} with $A=\1$, $k=\ell = 0$ and the fact that the trace-norm is lower semi-continuous under weak-$\star$ convergence in the trace-class.

As regards the particle energy we denote $\gamma_\alpha$ the particle reduced density matrix of $|\Psi_\alpha \rangle \langle \Psi_\alpha|$. Since $\Tr \left( \gamma_\alpha h\right ) < \infty$, we have that 
\begin{equation*} \gamma_\alpha^h (B) := \Tr \left( h^{1/2} \gamma_\alpha h^{1/2} B\right) \end{equation*}
defines a bounded sequence of positive linear forms over bounded operators. Extracting a further weakly-$\star$ convergent subnet and identifying the limit by testing with $B$ of the form $h^{-1/2}\tilde{B}h^{-1/2}$ we deduce that 
\begin{equation*} \Tr\left( h \gamma_\alpha \right) \cvnet \int \omega_u (h) d\mu (u).\end{equation*}
All in all
\begin{align*}
 \liminf_{\alpha \to \infty} E_{(\alpha)} ^{(V)} &\geq \int_{L^2 (\R^d)}  \left( \omega_u (h) + \left\Vert u \right\Vert_{L^2} ^2 + \int_{\R^d} \omega_u (v(\cdot-z)) \left( u (z) + \overline{u(z)}\right) dz  \right) d\mu (u)\\
 &\geq \inf \left\{ \omega (h) + \left\Vert u \right\Vert_{L^2} ^2 + \int_{\R^d} \omega (v(\cdot-z)) \left( u (z) + \overline{u(z)}\right) dz, u \in L^2 (\R^d), \omega \in \cS (L^2 (\R^d)) \right\}
\end{align*}
since $\mu$ is a probability measure. Minimizing with respect to $u$ at fixed $\omega$ in a similar manner as in~\eqref{eq:pekar_functional} leads to a real-valued $u$ such that 
\begin{equation*} u(z) = - \omega \left(v(.-z)\right)\end{equation*}
and an energy 
\begin{align*}
 \omega (h) - \left\Vert u \right\Vert_{L^2} ^2 &= \omega (h) - \iint \omega (v(\cdot-y)) \omega (v(\cdot-z)) dy dz\\
 &= \omega (h) - \iint \omega_{x_1} \otimes \omega_{x_2} \left( v(x_1-y)  v(x_2-z)\right) dy dz\\
 &= \mathcal{E}_\mathrm{Pek}^{(V)}(\omega)
\end{align*}
where we inverted the integral over $y,z$ and the expectation in $\omega \otimes \omega$ in the last step, recalling~\eqref{eq:eff pot}. We conclude that 
\begin{equation*} \liminf_{\alpha \to \infty} E_{(\alpha)} ^{(V)} \geq E_\mathrm{Gen}^{(V)}.\end{equation*}
There remains to use Lemma~\ref{lem:gen ener} and recall that the upper bound 
\begin{equation*} E_\mathrm{Pek}^{(V)} (1) \geq E_{(\alpha)} ^{(V)}\end{equation*}
follows from the trial state argument sketched in Section~\ref{sec:main}.

\section{Convergence of states, proof of Theorem~\ref{thm:main}}\label{sec:limit}
	
Since our main result Theorem~\ref{thm:main} is stated modulo subsequence, we take the liberty of not indicating all extractions of subsequences/subnets in the arguments of this section. 

We start from a sequence of states 
\begin{equation*} \Gamma_\alpha = |\Psi_\alpha\rangle \langle \Psi_\alpha |\end{equation*}
as in the statement of the theorem. As in the previous section we have that 
\begin{equation}\label{eq:apriori bis}
\left\langle\Psi_\alpha| \left( - \Delta + V \right) \otimes \1 + \1 \otimes \cN_\alpha | \Psi_\alpha \right\rangle \leq C 
\end{equation}
and we may apply Theorem~\ref{thm:deF comp} with $\kappa = 1$, obtaining a probability measure $\mu$ over $L^2 (\R^d)$ and a state-valued map $\omega_u$. Let $\gamma_\alpha$ be the particle density matrix of $\Gamma_\alpha$, as in Section~\ref{sec:DMs}. We may extract a weak-$\star$ convergent subsequence in the trace-class:
\begin{equation}\label{eq:weak star}
 \Tr \left[ \gamma_\alpha K \right] \underset{\alpha \to \infty}{\to} \Tr \left[ \gamma_\infty K \right]  
\end{equation}
for any compact operator $K$ over $L^2(\R^d)$. Identifying the limit using Theorem~\ref{thm:deF comp}, it must be that 
\begin{equation}\label{eq:gamma inf}
 \gamma_\infty = \int \omega_u ^{\nor} d\mu (u) 
\end{equation}
with $\omega_u ^{\nor}$ the normal part of $\omega_u$, i.e. the unique trace-class operator satisfying
\begin{equation}\label{eq:normal part}
 \omega_u (K) = \Tr \left[\omega_u ^{\nor} K \right] \mbox{ for any compact operator } K. 
\end{equation}

Arguing in a similar manner for the field density matrix 
\begin{equation*}\gamma^f :=\Gamma_{\alpha}^{(1,1)}\end{equation*}
we find 
\begin{equation*} 
\gamma^f \underset{\alpha \to \infty}{\overset{\star}{\wto}} \int |u\rangle \langle u| d\mu (u)
\end{equation*}
along a subsequence, instead of just a subnet as in Theorem~\ref{thm:deF comp}. As regards the particle-field density matrix $\sigma_\alpha$, we consider $\sigma_{\alpha}(x,x;z)$ as a $L^1_x L^2_z$ function as in the proof of Corollary~\ref{cor:deFint} and deduce 
\begin{equation*}
\iint_{\R^2 \times \R^2} \sigma_{\alpha} (x,x;z) v(x-z) dx dz \underset{\alpha\to \infty}{\to} \int_{\gH} \int_{\R^d} \omega_u (v(\cdot-z)) \left( u (z) + \overline{u(z)}\right) dz d\mu (u).
\end{equation*}
Now, we aim at turning the weak convergences from~\eqref{eq:weak star} and Theorem~\ref{thm:deF comp} into strong ones. For that we prove that no mass is lost in the limit:

\begin{lemma}[\textbf{No loss of mass}]\label{lem:mass}\mbox{}\\
Let $\gamma_\infty$ be the weak-$\star$ limit of the particle density matrix, introduced above. We have that 
\begin{equation*} \Tr \left[ \gamma_\infty \right]= 1\end{equation*}
and hence
\begin{equation*} \gamma_\alpha \underset{\alpha \to \infty}{\to}\gamma_\infty\end{equation*}
along a subsequence, strongly in trace-class norm.
\end{lemma}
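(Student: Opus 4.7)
The plan is to combine the energy localization of Proposition~\ref{prop:localization} with the lower-bound argument from the proof of Theorem~\ref{thm:ener} applied separately to the inside/outside pieces, and to rule out mass escape through a concentration-compactness-type binding inequality. Set $m := \Tr[\gamma_\infty] \in [0,1]$, the upper bound coming from weak-$\star$ lower semi-continuity of the trace. The objective is $m = 1$; combined with the weak-$\star$ convergence $\gamma_\alpha \overset{\star}{\wto} \gamma_\infty$ and positivity this yields strong trace-norm convergence by~\cite{dellAntonio-67}, \cite[Addendum~H]{Simon-79}. As a preliminary step, the a priori kinetic-energy bound from~\eqref{eq:apriori bis} combined with Rellich's theorem makes $\chi_R (1-\Delta)^{-1/2}$ compact on $L^2(\R^d)$, so that $\chi_R \gamma_\alpha \chi_R$ converges in trace norm to $\chi_R \gamma_\infty \chi_R$ for each fixed $R$. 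Hence $\Tr[\chi_R^2 \gamma_\alpha] \to \Tr[\chi_R^2 \gamma_\infty]$ as $\alpha \to \infty$, and then to $m$ as $R \to \infty$ by monotone convergence.

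Next I would apply Proposition~\ref{prop:localization} to $\Gamma_\alpha$: together with~\eqref{eq:minimizing_sequence} and Theorem~\ref{thm:ener} this gives
\begin{equation*}
E_{\rm Pek}^{(V)}(1) \geq \liminf_{R \to \infty} \liminf_{\alpha \to \infty} \Big[\Tr\bigl[H_\alpha^{(V)} \Gamma_{\alpha, \chi_R}\bigr] + \Tr\bigl[H_\alpha^{(0)} \Gamma_{\alpha, \eta_R}\bigr]\Big].
\end{equation*}
For each localized state separately, I would re-run the quasi-classical measure argument of Section~\ref{sec:energy}. By Proposition~\ref{prop:def_localization}, the particle densities of $\Gamma_{\alpha, \chi_R}$ and $\Gamma_{\alpha, \eta_R}$ are $\chi_R \gamma_\alpha \chi_R$ and $\eta_R \gamma_\alpha \eta_R$, whose traces tend to $m$ and $1-m$ respectively; the quasi-classical measures produced by Theorem~\ref{thm:deF comp} then inherit these total masses. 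Adapting Lemmas~\ref{lem:mixed pek}--\ref{lem:gen ener} to sub-probability measures (essentially a scaling argument), the Section~\ref{sec:energy} lower bound yields
\begin{equation*}
\liminf_{\alpha \to \infty} \Tr\bigl[H_\alpha^{(V)} \Gamma_{\alpha, \chi_R}\bigr] \geq E_{\rm Pek}^{(V)}(m) + o_R(1), \quad \liminf_{\alpha \to \infty} \Tr\bigl[H_\alpha^{(0)} \Gamma_{\alpha, \eta_R}\bigr] \geq E_{\rm Pek}^{(0)}(1-m) + o_R(1),
\end{equation*}
hence the binding-type inequality $E_{\rm Pek}^{(V)}(1) \geq E_{\rm Pek}^{(V)}(m) + E_{\rm Pek}^{(0)}(1-m)$.

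To conclude, the case $m = 0$ reads $E_{\rm Pek}^{(V)}(1) \geq E_{\rm Pek}^{(0)}(1)$ (using $E_{\rm Pek}^{(V)}(0) = 0$), contradicting comment (5) after Theorem~\ref{thm:main} which gives $E_{\rm Pek}^{(V)}(1) < E_{\rm Pek}^{(0)}(1)$ for strictly negative $V$ (or more generally under the standing assumption). The case $0 < m < 1$ is excluded by strict subadditivity of the Pekar energies, itself a standard consequence of the existence of Pekar minimizers at every mass combined with the attractive nature of the nonlinear self-interaction (cf.~\cite{Lions-80}). Hence $m = 1$. The main obstacle is verifying the energy lower bound for the sub-probability localized states: one must carefully track how the mass of the quasi-classical measure produced in Section~\ref{sec:energy} relates to the trace of the particle density matrix of the localized state (via~\eqref{eq:localized_density_particle} and~\eqref{eq:localized_traces}), and check that the generalized Pekar framework of Lemma~\ref{lem:gen ener} extends correctly to arbitrary mass $m \in [0,1]$. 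The strict subadditivity, while standard in Choquard-type problems, also deserves a brief dedicated verification in our mixed-potential setting by exhibiting scaled or shifted trial states.
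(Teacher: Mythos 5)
Your overall strategy matches the paper's: localize the energy via Proposition~\ref{prop:localization}, then rule out loss of mass through a binding-type inequality, concluding with $E^{(V)}_{\mathrm{Pek}}(1) < E^{(0)}_{\mathrm{Pek}}(1)$. Your Step~1 (compactness of $\chi_R(1-\Delta)^{-1/2}$, hence $\Tr[\chi_R^2\gamma_\alpha]\to\Tr[\chi_R^2\gamma_\infty]$) is correct; the paper derives compactness via the Kato--Seiler--Simon bound rather than Rellich--Kondrachov, but both work.

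The problem is in the claimed lower bound
\begin{equation*}
\liminf_\alpha \Tr\bigl[H^{(V)}_\alpha\,\Gamma_{\alpha,\chi_R}\bigr] \;\geq\; E^{(V)}_{\mathrm{Pek}}(m) + o_R(1),
\end{equation*}
which does \emph{not} follow from ``re-running the quasi-classical measure argument of Section~\ref{sec:energy}.'' The localized state $\Gamma_{\alpha,\chi_R}$ has trace $t_{\alpha,R}\leq 1$; to apply Theorem~\ref{thm:deF comp} one must normalize it, and since the resulting lower bound is homogeneous of degree one in the state it only yields $\liminf_\alpha\Tr[H^{(V)}_\alpha\Gamma_{\alpha,\chi_R}] \geq t_{\alpha,R}\, E^{(V)}_{\mathrm{Pek}}(1)$, hence $m\,E^{(V)}_{\mathrm{Pek}}(1)$ after $R\to\infty$. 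A scaling argument (using that the self-interaction term of $\mathcal{E}^{(V)}_{\mathrm{Pek}}$ is attractive) shows $m\,E^{(V)}_{\mathrm{Pek}}(1)\leq E^{(V)}_{\mathrm{Pek}}(m)$ for $m\leq1$, so your claimed bound is strictly \emph{stronger} than what the proposed method delivers; it would require an additional idea to establish, and there is no reason to expect it to hold in general.

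The paper takes a lighter route that avoids both this issue and the strict subadditivity of Pekar energies at intermediate masses, which you rightly flag as needing its own verification. It does not re-run the quasi-classical machinery for the localized pieces at all: it bounds the Hamiltonians from below by their lowest spectral values, $H^{(V)}_\alpha\geq E^{(V)}_\alpha\1$ and $H^{(0)}_\alpha\geq E^{(0)}_\alpha\1$, giving $\Tr[H^{(V)}_\alpha\Gamma_{\alpha,\chi_R}]\geq\Tr[\Gamma_{\alpha,\chi_R}]\,E^{(V)}_\alpha$ and the analogue for the outer piece, then inserts the already-proven Theorem~\ref{thm:ener} (for both $V$ and $V\equiv0$) together with the trace identity~\eqref{eq:localized_traces}. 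This yields $E^{(V)}_{\mathrm{Pek}}(1)\geq \liminf\bigl(E^{(V)}_{\mathrm{Pek}}(1)\Tr[\Gamma_{\alpha,\chi_R}]+E^{(0)}_{\mathrm{Pek}}(1)(1-\Tr[\Gamma_{\alpha,\chi_R}])\bigr)$, whence $(1-m)\bigl(E^{(0)}_{\mathrm{Pek}}(1)-E^{(V)}_{\mathrm{Pek}}(1)\bigr)\leq0$ and $m=1$, using only the single strict inequality $E^{(V)}_{\mathrm{Pek}}(1) < E^{(0)}_{\mathrm{Pek}}(1)$.
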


\begin{proof}
That the first statement implies the second is classical~\cite{dellAntonio-67,Simon-79}. We thus focus on the mass of the limit density matrix. 

\medskip 

\noindent\textbf{Step 1.} Let $\chi_R$ be a localization function as in Section~\ref{sec:loc}. We claim that 
\begin{equation}\label{eq:loc mass}
\Tr \left[ \chi_R \gamma_\alpha \chi_R \right] \underset{\alpha \to \infty}{\to} \Tr \left[ \chi_R \gamma_\infty \chi_R \right]. 
\end{equation}
Indeed, let $\gamma_\alpha ^k$ be the positive  operator
\begin{equation*} \gamma_\alpha ^k = (1-\Delta)^{1/2} \gamma_\alpha (1-\Delta)^{1/2}.\end{equation*}
It follows from~\eqref{eq:apriori bis} that $\gamma_\alpha ^k$ is uniformly bounded in trace-class norm. Thus, modulo a possible further extraction 
\begin{equation*} 
\gamma_\alpha ^k \underset{\alpha \to \infty}{\overset{\star}{\wto}} \gamma_\infty ^k = (1-\Delta)^{1/2} \gamma_\infty (1-\Delta)^{1/2} 
\end{equation*}
weakly-star in the trace-class, where we identified the limit by testing against $(1-\Delta)^{-1/2} K (1-\Delta)^{-1/2}$ for a compact operator $K$. Then 
\begin{align*}
\Tr \left[ \chi_R \gamma_\alpha \chi_R \right] &= \Tr \left[ \chi_R (1-\Delta)^{-1/2} \gamma_\alpha ^k  (1-\Delta)^{-1/2} \chi_R\right] \\
&\underset{\alpha \to \infty}{\to} \Tr \left[ \chi_R (1-\Delta)^{-1/2} \gamma_\infty ^k  (1-\Delta)^{-1/2} \chi_R\right] \\
&= \Tr \left[ \chi_R \gamma_\infty \chi_R \right]
\end{align*}
because $\chi_R (1-\Delta)^{-1/2}$ is compact. Indeed, since $\chi_R$ is smooth with compact support it is in any $L^p$ space, while $(1-\Delta)^{-1/2}$ acts in Fourier variables as the multiplyier by $\left(1+|k|^2\right)^{-1/2}$, which belongs to $L^q$ for $q>d$. Hence, the Kato-Seiler-Simon inequality~\cite[Chapter~4]{Simon-79} implies that $\chi_R (1-\Delta)^{-1/2}$ is in the Schatten space $\cL^q$ for any $q>d$. 
 
\medskip 

\noindent\textbf{Step 2.} We now prove that 
\begin{equation}\label{eq:lim mass}
\lim_{R\to \infty}\lim_{\alpha \to \infty}  \Tr \left[ \chi_R \gamma_\alpha \chi_R \right] = 1. 
\end{equation}
Let $\chi_R$ be as above and 
\begin{equation*} \eta_R = \sqrt{1-\chi_R^2}.\end{equation*}
Then~\eqref{eq:full_energy_localization} implies (bounding $H_\alpha^{(0)}$ from below by its' lowest eigenvalue)
\begin{equation*}
				\liminf_{\alpha\to\infty}\, \mathrm{Tr}_{\mathfrak{H}}\big(H_\alpha^{(V)}\, \Gamma_\alpha\big)
				\ge\; \liminf_{R\to\infty} \liminf_{\alpha\to\infty} \left( \mathrm{Tr}_{\mathfrak{H}}\big(H_\alpha^{(V)}\, \Gamma_{\alpha,\chi_R}\big) +\mathrm{Tr}_{\mathfrak{H}} \big(\Gamma_{\alpha,\eta_R}\big) E^{(0)}_\alpha\right).
\end{equation*}
with $\Gamma_{\alpha,\chi_R}$ and $ \Gamma_{\alpha,\eta_R}$ the $\chi_R-$ and $\eta_R-$localized states constructed from $\Gamma_\alpha$. 

Next, combining with the energy upper bound obtained as sketched in Section~\ref{sec:main},
\begin{equation*} E_\mathrm{Pek}^{(V)}(1) \geq \liminf_{R\to\infty} \liminf_{\alpha\to\infty} \left(\mathrm{Tr}_{\mathfrak{H}} \big(\Gamma_{\alpha,\chi_R}\big) E^{(V)}_\alpha + \mathrm{Tr}_{\mathfrak{H}} \big(\Gamma_{\alpha,\eta_R}\big) E^{(0)}_\alpha \right).\end{equation*}
Inserting the energy convergence from Theorem~\ref{thm:ener} and using~\eqref{eq:localized_traces} leads to  
\begin{equation*} E_\mathrm{Pek}^{(V)}(1) \geq \liminf_{R\to\infty} \liminf_{\alpha\to\infty} \left(E_\mathrm{Pek}^{(V)}(1)\mathrm{Tr}_{\mathfrak{H}} \big(\Gamma_{\alpha,\chi_R}\big) + E_\mathrm{Pek}^{(0)}(1)\left(1-\mathrm{Tr}_{\mathfrak{H}} \big(\Gamma_{\alpha,\chi_R}\big)\right)\right)\end{equation*} 
so that  
\begin{equation*} 0 \geq \liminf_{R\to\infty} \liminf_{\alpha\to\infty}\left(E_\mathrm{Pek}^{(0)}(1) - E_\mathrm{Pek}^{(V)}(1)\right) \left(1-\mathrm{Tr}_{\mathfrak{H}} \big(\Gamma_{\alpha,\chi_R}\big)\right).\end{equation*}
But 
\begin{equation*} E_\mathrm{Pek}^{(V)}(1) < E_\mathrm{Pek}^{(0)}(1)\end{equation*}
since $V < 0$, as follows by using a translation-invariant ground state as trial state for the functional with trapping potential. It must thus be that 
\begin{equation*} \liminf_{R\to\infty} \liminf_{\alpha\to\infty} \mathrm{Tr}_{\mathfrak{H}} \big(\Gamma_{\alpha,\chi_R}\big) = 1,\end{equation*}
which implies~\eqref{eq:lim mass}, using~\eqref{eq:localized_density_particle}.

\medskip 

\noindent\textbf{Conclusion.} Combining~\eqref{eq:loc mass} with~\eqref{eq:lim mass} leads to 
\begin{equation*} \lim_{R\to \infty}  \Tr \left[ \chi_R \gamma_\infty \chi_R \right] = 1 \end{equation*}
and the result follows.
\end{proof}

Combining the lemma with~\eqref{eq:gamma inf} implies that 
\begin{equation*} \Tr \left[\omega_u ^{\nor}\right] = 1\end{equation*}
for $\mu$-almost every $u$, where the normal part is defined as in~\eqref{eq:normal part}. Hence $\omega_u$ coincides $\mu$-almost surely with its normal part, a positive trace-class operator. We denote the latter $\gamma_u$, which has trace~$1$.

We may now return to Theorem~\ref{thm:deF comp} and pass to the limit in the energy as in Section~\ref{sec:energy} to obtain  
\begin{align*}
E_\mathrm{Pek}^{(V)}(1) \geq \liminf_{\alpha \to \infty} E_{(\alpha)} ^{(V)} &\geq \int_{L^2 (\R^d)}  \left( \Tr\left[ h \gamma_u\right] + \left\Vert u \right\Vert_{L^2} ^2 + \int_{\R^d} \Tr\left[ \gamma_u (v(\cdot-z))\right] \left( u (z) + \overline{u(z)}\right) dz  \right) d\mu (u)\\
&\geq \int_{L^2 (\R^d)}  \left( \Tr\left[ \left(h  + W\ast \rho_{\gamma_u}\right)\gamma_u\right] \right) d\mu (u)\\
&\geq E_\mathrm{Pek}^{(V)}(1).
\end{align*}
To go to the second line we have minimized with respect to $u$, obtaining 
\begin{equation}\label{eq:min u}
 u(z) = - \Tr \left[ \gamma_u v(.-z)\right] = - \int_{\R^d} \rho_{\gamma_u} (x) v(x-z) dx
\end{equation}
with $\rho_{\gamma_u} (x) = \gamma_u (x,x)$ the density of $\gamma_u.$
To go to the third line we used Lemma~\ref{lem:mixed pek}, i.e. that the Pekar functional for mixed states leads to the same minimization problem as the usual one. This fact and the previous chain of inequalities (there must be equality throughout) also imply that for $\mu$-almost every $u$, 
\begin{equation*}\gamma_u = |\psi \rangle \langle \psi |\end{equation*}
with $\psi$ a minimizer of the Pekar energy functional~\eqref{eq:pekar_functional} at mass $1$. We also must have~\eqref{eq:min u} and hence 
\begin{equation*}u= u_\psi= - v \ast |\psi|^2\end{equation*} 
as in~\eqref{eq:fieldconfig} for $\mu$-almost every $u$. Theorem~\ref{thm:main} follows upon defining 
\begin{equation*} dP(\psi) := \int \1_{u=u_\psi} d\mu(u).\end{equation*}

\newpage


%

\end{document}